\documentclass[12pt]{amsart}
\usepackage{amssymb,amsmath,amsfonts,amscd,euscript,verbatim,}

\usepackage{fullpage}

\usepackage[backref=page]{hyperref}
\usepackage{todonotes}

\usepackage{xcolor}

\usepackage{bm}

\usepackage{tikz-cd}

\usepackage{relsize}


\numberwithin{equation}{section}
\newtheorem{thm}{Theorem}[section]
\newtheorem{prop}[thm]{Proposition}
\newtheorem{lem}[thm]{Lemma}
\newtheorem{cor}[thm]{Corollary}
\newtheorem{rem}[thm]{Remark}

\newtheorem*{thma}{Theorem A}
\newtheorem*{thmb}{Theorem B}
\newtheorem*{thmc}{Theorem C}
\newtheorem*{thmd}{Theorem D}
\newtheorem*{cor*}{Corollary}

%
%
\newcommand{\nc}{\newcommand}

\nc{\bA}{\mathbb A}
\nc{\bC}{\mathbb C}
\nc{\bc}{{\bf c}}
\nc{\bD}{\mathbb D}
\nc{\bd}{\mathbb d}
\nc{\bE}{\mathbb E}
\nc{\bG}{\mathbb G}
\nc{\bL}{\mathbb L}
\nc{\bN}{\mathbb N}
\nc{\bP}{\mathbb P}
\nc{\bQ}{\mathbb Q}
\nc{\bR}{\mathbb R}
\nc{\bu}{\mathbb u}
\nc{\bW}{\mathbb W}
\nc{\bZ}{\mathbb Z}

\nc{\cA}{\mathcal A}
\nc{\cB}{\mathcal B}
\nc{\cC}{\mathcal C}
\nc{\cD}{\mathcal D}
\nc{\cF}{\mathcal F}
\nc{\cI}{\mathcal I}
\nc{\cK}{\mathcal K}
\nc{\cL}{\mathcal L}
\nc{\cO}{\mathcal O}
\nc{\cP}{\mathcal P}
\nc{\cV}{\mathcal V}
\nc{\cW}{\mathcal W}

\nc{\al}{\alpha}
\nc{\be}{\beta}
\nc{\la}{\lambda}
\nc{\La}{\Lambda}
\nc{\ve}{\varepsilon}
\nc{\om}{\omega}

\nc{\gl}{\mathfrak{gl}}
\nc{\fsl}{\mathfrak{sl}}
\nc{\g}{\mathfrak{g}}
\nc{\gh}{\widehat\g}
\nc{\h}{\mathfrak{h}}
\nc{\fp}{{\mathfrak p}}
\nc{\fh}{{\mathfrak h}}
\nc{\fg}{{\mathfrak g}}
\nc{\fgh}{{\widehat{\mathfrak g}}}
\nc{\fb}{{\mathfrak b}}
\nc{\bh}{\widehat \fb}
\nc{\fn}{{\mathfrak n}}
\nc{\fQ}{\mathfrak{Q}}
\nc{\fI}{\mathfrak{I}}

\nc{\bi}{{\bold i}}

\nc{\Aut}{\mathrm{Aut}}
\nc{\ch}{{\mathop {\rm ch}}}
\nc{\tr}{{\mathop {\rm tr}\,}}
\nc{\id}{{\mathop {\rm id}}}
\nc{\ad}{{\mathop {\rm ad}}}
\nc{\gr}{\mathrm{gr}}
\nc{\Spec}{\operatorname{Spec}}
\nc{\Proj}{\operatorname{Proj}}
\nc{\Pic}{\operatorname{Pic}}
\nc{\Hom}{\mathrm{Hom}}
\nc{\Ann}{\mathrm{Ann}}
\nc{\wt}{\mathrm{wt}}
\nc{\hw}{\mathrm{hw}}
\nc{\rk}{\mathrm{rk}}
\nc{\Gr}{{\mathrm {Gr}}}
\nc{\Fl}{\mathrm{Fl}}
\nc{\spn}{\mathrm{span}}
\nc{\Rep}{\mathrm{Rep }}
\nc{\coh}{\mathrm{coh}}
\nc{\grmod}{\mathrm{grmod}}
\nc{\Ih}{\widehat I}

\nc{\eO}{\EuScript{O}}

\nc{\bra}{\langle}
\nc{\ket}{\rangle}
\nc{\pa}{\partial}
\nc{\ld}{\ldots}
\nc{\cd}{\cdots}
\nc{\hk}{\hookrightarrow}
\nc{\T}{\otimes}
\nc{\ov}{\overline}
\nc{\svee}{{\!\scriptscriptstyle\vee}}
\nc{\ula}{{\underline{\la}}}
\nc{\umu}{{\underline{\mu}}}
\nc{\conv}{{\widetilde \times}}
\nc{\lach}{{\la^\svee}}
\nc{\alch}{{\al^\svee}}
\nc{\omch}{{\omega^\svee}}

\DeclareMathOperator*{\bigstarlim}{\mathlarger{ \mathlarger { \mathlarger{ \mathlarger{\mathlarger{\star}}}}}}

\DeclareMathOperator*{\bigastlim}{\mathlarger{ \mathlarger { \mathlarger{ \mathlarger{\mathlarger{\ast}}}}}}

%
%

\nc{\GL}{\mathfrak{GL}}
\nc{\Tr}{{\mathop {\rm Tr}\,}}
\nc{\Id}{{\mathop {\rm Id}}}

\nc{\msl}{\mathfrak{sl}}
\nc{\mgl}{\mathfrak{gl}}

\nc{\U}{\mathrm U}

\nc{\Q}{\mathfrak Q}

\nc{\on}{\operatorname} \nc\ol{\overline} \nc\ul{\underline}

\nc{\BA}{{\mathbb{A}}} \nc{\BC}{{\mathbb{C}}} \nc{\BF}{{\mathbb{F}}}
\nc{\BD}{{\mathbb{D}}} \nc{\BG}{{\mathbb{G}}} \nc{\BQ}{{\mathbb{Q}}}
\nc{\BM}{{\mathbb{M}}} \nc{\BN}{{\mathbb{N}}} \nc{\BO}{{\mathbb{O}}}
\nc{\BP}{{\mathbb{P}}} \nc{\BR}{{\mathbb{R}}}
\nc{\BZ}{{\mathbb{Z}}} \nc{\BS}{{\mathbb{S}}} \nc{\BW}{{\mathbb{W}}}

\nc{\CA}{{\mathcal{A}}} \nc{\CL}{{\mathcal{L}}} \nc{\CV}{{\mathcal{V}}} \nc{\CW}{{\mathcal{W}}}
\nc{\CalD}{{\mathcal{D}}}

\nc{\sic}{{\on{sc}}}
\nc{\add}{{\on{add}}}


\begin{document}
	
	\title[Feigin--Loktev fusion product and coherent Satake category]
	{A geometric approach to Feigin--Loktev fusion product and cluster relations in coherent Satake category}
	
	\author{Ilya Dumanski}
	\address{Ilya Dumanski:\newline
		Department of Mathematics, MIT, Cambridge, MA 02139, USA,
		{\it and }\newline
		Department of Mathematics, National Research University Higher School of Economics, Russian Federation,
		Usacheva str. 6, 119048, Moscow.
	}
	\email{ilyadumnsk@gmail.com}

\begin{abstract}
	We propose a geometric realization of the Feigin--Loktev fusion product of graded cyclic modules over the current algebra. This allows us to compute it in several new cases. We also relate the Feigin--Loktev fusion product to the convolution of perverse coherent sheaves on the affine Grassmannian of the adjoint group. This relation allows us to establish the existence of exact triples, conjecturally corresponding to cluster relations in the Grothendieck ring of coherent Satake category.
\end{abstract}

\maketitle
	
\section*{Introduction}

\subsection{}
Let $\g$ be a semisimple Lie algebra over $\bC$ and $\g[t]$ the corresponding current algebra. In this paper, we deal with graded cyclic finite-dimensional $\g[t]$-modules.
In \cite{FeLo} the authors introduced a certain operation between such modules -- the fusion product. 

Since then, this operation and related concepts were found to be useful in the theory of quantum loop groups (\cite{CP}, \cite{Nao4} \cite{BCKV}), the theory of Macdonald polynomials (\cite{FM1}, \cite{BCSW}), the $X = M$ conjecture (\cite{AK}, \cite{DK1}, \cite{Nao4}), 
geometry of semi-infinite flag manifolds (\cite{Kat1}, \cite{Kat2}), Schur positivity (\cite{FH}, \cite{Nao3}, \cite{FMM}), toroidal algras (\cite{KL}, \cite{Fei}) and cluster categorifications (\cite{BCM}). We refer to \cite{BCKV} for a survey on this.

The definition of fusion product in \cite{FeLo} involves a choice of constants. Despite this definition being completely elementary, it is still not proved in general, that it in fact is independent of this choice. It is also not proved that this operation is associative. See \cite{CL}, \cite{FoLi2}, \cite{Nao1}, \cite{Nao4}, \cite{Ven}, \cite{CSVW}, \cite{VV}, \cite{KL}, \cite{CV}, \cite{Rav1}, \cite{Rav2} for special cases.

In this paper, we propose a geometric way of determining the fusion product of some modules. Note that studying representations of $\g[t]$ in the context of geometric representation theory is not a new idea. For instance, the relation of global Weyl modules to the geometry of semi-infinite flag varieties was studied in \cite{BF}, \cite{Kat1}, \cite{FM2}, \cite{DF}. In \cite{BFM}, the relation of fusion product with convolution of simple perverse coherent sheaves on the affine Grassmannian for $PSL_2$ was emphasized. Finally, in \cite{DFF}, \cite{HY}, the appearance of global Demzure modules over $\g[t]$ on the spaces of sections of the Schubert varieties in the Beilinson--Drinfeld Grassmannian was shown.
\subsection{}
Our approach is connected to the papers \cite{BFM} and \cite{DFF}. We show how one can determine the fusion product of modules $M_i$, which appear as spaces of sections of sheaves on certain varieties.
Namely, we have (we refer to Section \ref{section generalities} for all required definitions and to Section \ref{section main result} for precise statement and proof)

\begin{thma} \label{intro geometric fusion}
	(See Theorem \ref{geometric fusion} for precise statement)
	Let $X$ be a scheme over the affine space $\bA^{k}$, endowed with action of the Beilinson--Drinfeld current group scheme $G^{sc}_{\bA^{k}}(\cO)$. Assume that $\cF$ is a $G^{sc}_{\bA^{k}}(\cO)$-equivariant sheaf on $X$, flat over $\bA^{k}$. Suppose that for all $\bc = (c_1, \hdots, c_k) \in \bA^k$ with $c_i \neq c_j$ for $i \neq j$ one has an isomorpism of $G^{sc}_{\bA^{k}}(\cO) \vert_\bc \simeq G^{sc}(\cO)^{\times k}$ modules
	\begin{equation} \label{factorization of modules}
	    H^0(X \vert_{\bc}, \cF \vert_\bc)^* \simeq M_1(c_1) \T \hdots \T M_k(c_k).
	\end{equation}
	Suppose also that some technical conditions hold. Then the fusion product of modules $M_1, \hdots, M_k$ does not depend on the choice of constants and is isomorphic to $H^0(X \vert_{0}, \cF \vert_0)^*$.
\end{thma}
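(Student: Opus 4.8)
\subsection*{Proof proposal}

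The plan is to interpolate between the factorized situation at distinct points and the collided situation at the origin by restricting the family to a generic line through $0$, and then to extract the Feigin--Loktev filtration from the resulting $\bG_m$-degeneration. Fix constants $\bc^0 = (c_1^0, \ldots, c_k^0) \in \bA^k$ with distinct coordinates, and consider the line $\ell \colon \bA^1 \to \bA^k$, $s \mapsto s\,\bc^0$. Pulling back $X$ and $\cF$ along $\ell$ produces a scheme $X'$ with a sheaf $\cF'$, flat over $\bA^1$ (flatness is preserved under base change) and equivariant for the pulled-back current group scheme $\ell^* G^{sc}_{\bA^k}(\cO)$. For $s \neq 0$ the configuration $\ell(s)$ has distinct coordinates, so by \eqref{factorization of modules} the fiber factorizes, while at $s = 0$ all points collide to the origin.

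Let $\pi \colon X' \to \bA^1$ denote the structure map. First I would show that $\pi_* \cF'$ is a locally free $\cO_{\bA^1}$-module whose formation commutes with base change, so that its fiber at $s$ is $H^0(X'|_s, \cF'|_s)$; this is precisely where the technical conditions enter. Flatness of $\cF'$ over $\bA^1$ gives upper semicontinuity of $h^0$, and constancy of $h^0$ (guaranteed by \eqref{factorization of modules}, since $\dim H^0(X|_\bc, \cF|_\bc) = \prod_i \dim M_i$ is independent of $\bc$ on the distinct locus) together with the requisite higher-cohomology vanishing allows cohomology-and-base-change to force local freeness, including at $s = 0$. Dualizing, $(\pi_* \cF')^*$ is a locally free sheaf on $\bA^1$ carrying an action of the relative current algebra $\g \otimes \cO$ of the pulled-back group scheme: at the generic fiber this is, by the factorization isomorphism, the diagonal $\g[t]$-action on $M_1(s c_1^0) \T \cdots \T M_k(s c_k^0)$, and at $s = 0$ it is the $\g[t]$-action on $H^0(X|_0, \cF|_0)^*$.

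The main step is to identify the flat degeneration of the generic fiber onto the special fiber with the Feigin--Loktev fusion filtration. The loop-rotation $\bG_m$ acts on the curve and rescales the configuration space, so that $\ell$ is the closure of a single $\bG_m$-orbit; lifting this action to $X'$ and $\cF'$ makes $(\pi_* \cF')^*$ a $\bG_m$-equivariant, hence graded, $\cO_{\bA^1}$-module whose fiber at $s = 0$ is the associated graded of the $s$-adic filtration on the generic fiber. I would then match this $s$-adic filtration with the degree filtration defining the fusion product: the rescaling $\bc^0 \mapsto s\,\bc^0$ simultaneously realizes the shift automorphisms $t \mapsto t - s c_i^0$ and the rescaling of the grading parameter, so that the $\bG_m$-weight filtration on $\bigotimes_i M_i(s c_i^0)$ is exactly the filtration by total degree used in \cite{FeLo}, and the $\bG_m$-fixed tensor of cyclic vectors maps to the cyclic generator of the fusion product. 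This yields $\gr\big(\bigotimes_i M_i(s c_i^0)\big) \simeq H^0(X|_0, \cF|_0)^*$. Since the right-hand side is manifestly independent of $\bc^0$, this simultaneously identifies the fusion product with $H^0(X|_0, \cF|_0)^*$ and shows it does not depend on the constants. I expect the reconciliation of the abstract $\bG_m$-degeneration with the combinatorial degree filtration of \cite{FeLo} --- together with the verification that cyclicity survives the limit, so that the dimension count forces the filtration to be exhaustive and its graded pieces to match --- to be the principal obstacle.
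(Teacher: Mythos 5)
Your degeneration strategy is essentially the one the paper uses: the dual global sections form a graded, free $\cO$-module interpolating between $M_1(c_1)\T\cdots\T M_k(c_k)$ and the zero fiber, and the point is to recognize the special fiber as the associated graded of the Feigin--Loktev filtration. The cohomology-and-base-change step is fine (it is condition (iii) of the precise statement, and the paper runs it via Euler-characteristic invariance). The restriction to the line $s\mapsto s\,\bc^0$ is a harmless repackaging; the paper works directly over $\bA^k$.

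The genuine gap is the step you yourself flag as "the principal obstacle" and then do not carry out: identifying the $\bG_m$-weight filtration on the generic fiber with the filtration $F_{\leq n}=U(\g[t])_{\leq n}(m_1\T\cdots\T m_k)$ of \cite{FeLo}. These two filtrations do \emph{not} agree automatically --- the image filtration coming from the grading on $\Gamma=H^0(X,\cF)^\vee$ can a priori be strictly larger in each degree than the span of $U(\g[t])_{\leq n}$ applied to the cyclic vector. The mechanism that forces equality is the hypothesis on the \emph{zero} fiber (condition (ii) of Theorem \ref{geometric fusion}): $\Gamma\T_{\cO}\bC_0$ is cyclic over $U(\g[t])$ with a unique highest weight vector, so by the graded Nakayama lemma a homogeneous lift $v$ generates all of $\Gamma$ as a $U(\g[t])$-$\cO$-bimodule. (Note this is the opposite direction from your phrase "cyclicity survives the limit": cyclicity at the limit is an assumption, and Nakayama propagates it outward to every fiber.) Granting bimodule-cyclicity, the paper then produces a surjection $\Gamma\T\bC_0\twoheadrightarrow \gr_F(\Gamma\T\bC_\bc)$ by an explicit computation with homogeneous relations: if a homogeneous $u\in U(\g[t])$ kills the cyclic vector of the zero fiber, then $(u-Pu')v=0$ in $\Gamma$ with $P(0)=0$ and $\deg u'<\deg u$, whence $u$ kills the cyclic vector of $\gr_F(\Gamma\T\bC_\bc)$; freeness then gives equality of dimensions and hence an isomorphism. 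Your Rees-module formalism can certainly be made to yield the same conclusion, but only after this cyclicity input is in place --- without it the "abstract $\bG_m$-degeneration" computes the associated graded of the wrong filtration, and the proposal as written does not close this loop.
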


The equation \eqref{factorization of modules} is a variant of \textit{factorization property}, and examples of $X$, satisfying the conditions of this theorem, arise as closures of $G^{sc}_{\bA^{k}}(\cO)$-orbits on some ind-schemes, well-known in geometric representation theory. Probably the most well-known example of such an ind-scheme would be the Beilison--Drinfeld Grassmannian. However, most general results (to our knowledge) can be obtained by applying this theorem to the global convolution diagrams of $k$ affine Grassmannians and one affine flag variety (see Subsection \ref{subsection global convolution diagrams} for definition). Namely, from Theorem A we can deduce the following result about the fusion product of affine Demazure modules (see Subsection \ref{affine demazure modules} for definitions). 

\begin{thmb} (See Corollary \ref{fusion of general demazures} for precise statement) Let $\ell_1 \geq \hdots \geq \ell_k \geq \ell \geq 0$ be nonnegative integers, $\la_1^\svee, \hdots, \la_k^\svee$ be dominant coweights and $\mu$ be a dominant weight. Then
	the fusion product
	\[
	D(\ell_1, \ell_1 \la_1^\svee) \ast \hdots \ast D(\ell_k, \ell_k \la_k^\svee) \ast D(\ell, \mu)
	\]
	does not depend on the choice of constants and is isomorphic to a certain generalized Demazure module.
	
	
\end{thmb}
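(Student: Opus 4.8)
The plan is to realize the left-hand side as (dualized) sections of an equivariant line bundle on a global convolution diagram and then invoke Theorem \ref{geometric fusion} with $k$ replaced by $k+1$. I would take $X$ to be the closure of the $G^{sc}_{\bA^{k+1}}(\cO)$-orbit inside the global convolution diagram of $k$ affine Grassmannians and one affine flag variety (Subsection \ref{subsection global convolution diagrams}) cut out by the data $(\la_1^\svee, \hdots, \la_k^\svee; \mu)$: the $i$-th Grassmannian factor contributes the spherical Schubert variety associated with the coweight $\la_i^\svee$, and the flag factor contributes the affine Schubert variety attached to $\mu$. For $\cF$ I take the equivariant line bundle of multilevel $(\ell_1, \hdots, \ell_k, \ell)$; on a single Schubert variety its space of sections dualizes to the affine Demazure module $D(\ell_i, \ell_i \la_i^\svee)$ (resp.\ $D(\ell, \mu)$), recovering the standard realization recalled in Subsection \ref{affine demazure modules}.

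The factorization property \eqref{factorization of modules} is then built into the Beilinson--Drinfeld structure. Over the open locus $\{c_i \neq c_j\}$ the convolution diagram splits as the product of the individual Schubert varieties, and $\cF$ splits as their external product; by the K\"unneth formula the space of sections factorizes, giving
\[
H^0(X\vert_\bc, \cF\vert_\bc)^* \simeq D(\ell_1, \ell_1 \la_1^\svee) \T \hdots \T D(\ell_k, \ell_k \la_k^\svee) \T D(\ell, \mu),
\]
which is exactly \eqref{factorization of modules} with $M_i = D(\ell_i, \ell_i \la_i^\svee)$ for $1 \leq i \leq k$ and $M_{k+1} = D(\ell, \mu)$, compatibly with the splitting $G^{sc}_{\bA^{k+1}}(\cO)\vert_\bc \simeq G^{sc}(\cO)^{\times (k+1)}$.

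It remains to verify the technical hypotheses. Flatness of $\cF$ over $\bA^{k+1}$ should follow because $X$ is a flat family of convolution Schubert varieties, all of whose fibers are equidimensional, reduced and Cohen--Macaulay. The vanishing $H^{>0}(X\vert_\bc, \cF\vert_\bc) = 0$ — which forces $\dim H^0$ to be constant in $\bc$ and makes $H^0$ commute with base change — I would extract from the normality and Frobenius splitting of Beilinson--Drinfeld convolution Schubert varieties established in \cite{DFF}, \cite{HY}. The ordering hypothesis $\ell_1 \geq \hdots \geq \ell_k \geq \ell \geq 0$ is precisely the dominance condition guaranteeing that $\cF$ is globally generated (semiample) along the convolution, so that the section spaces admit cyclic generators as the theorem requires.

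Granting the hypotheses, Theorem \ref{geometric fusion} yields that the fusion product is independent of the constants and isomorphic to $H^0(X\vert_0, \cF\vert_0)^*$. At the origin the global convolution diagram degenerates to the iterated one-point convolution of the Schubert varieties, and its space of sections dualizes, by construction, to the generalized Demazure module attached to $(\ell_1, \la_1^\svee; \hdots; \ell_k, \la_k^\svee; \ell, \mu)$. The main obstacle I anticipate is exactly this last identification of the special fiber: one must show that $X\vert_0$ is reduced with $H^{>0}(\cF\vert_0) = 0$ and that its sections recover the \emph{full} generalized Demazure module rather than a proper submodule. This is where the level ordering is essential — it ensures the line bundle remains semiample after degeneration — and where the Frobenius-splitting compatibility of the central fiber of the convolution diagram does the real work.
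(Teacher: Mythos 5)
Your geometric setup is exactly the paper's: the same $X = \ov \Gr_{\bA^1}^{\la_1^\svee} \conv \hdots \conv \ov \Gr_{\bA^1}^{\la_k^\svee} \conv \ov \Fl_{\bA^1}^w$, the same multilevel line bundle $\cL^{(\ell_1,\hdots,\ell_k,\La)}$ (with $\La$ an affine dominant weight of level $\ell$ and $w\La = \ell\La_0 + w_0\mu$), the same factorization of the generic fiber via K\"unneth, and flatness of the family as in Proposition \ref{flatness of global convolution}. The reading of the ordering hypothesis $\ell_1 \geq \hdots \geq \ell_k \geq \ell$ as the condition making the external tensor factors $\cL^{\ell_1-\ell_2}\boxtimes\hdots\boxtimes\cL^{\ell_k-\ell}\boxtimes\cL_\La$ effective is also correct.

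The genuine gap is the step you yourself flag as the main obstacle, and your proposed tool for it is not sufficient as stated. You need two things about the central fiber $\ov\Gr^{\la_1^\svee}\conv\hdots\conv\ov\Gr^{\la_k^\svee}\conv\ov\Fl^w$: (a) $H^{>0}(\cL^{(\ell_1,\hdots,\ell_k,\La)}\vert_0)=0$, and (b) the precise identification of $H^0(\cF\vert_0)^*$ with the \emph{full} generalized Demazure module $\bold D(\tau_{w_0\la_1^\svee}(\ell_1-\ell_2)\La_0,\hdots,\tau_{w_0(\la_1^\svee+\hdots+\la_k^\svee)}w\La)$ together with the fact that this module is $U(\g[t])$-cyclic from a unique highest $\g$-weight vector (this is hypothesis (ii) of Theorem \ref{geometric fusion}, which your proposal never actually verifies). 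Normality and Frobenius splitting of the convolution Schubert varieties can plausibly deliver (a), but they do not by themselves compute the space of sections, and a priori one only gets an injection of the generalized Demazure module's dual into $H^0(\cF\vert_0)$. The paper closes this by Proposition \ref{sections of convolution diagram}: it pulls the line bundle back along a Bott--Samelson variety $Z_{\bi}$ surjecting onto the convolution, invokes the standard monomial theory of Lakshmibai--Littelmann--Magyar \cite{LLM} to show the composite $\bold D(\ldots)^* \hookrightarrow H^0(\cF\vert_0) \hookrightarrow H^0(Z_\bi,\phi^*\cF\vert_0)$ is bijective (hence the first arrow is an isomorphism, and of $\g[t]$-modules by equivariance of the projective embedding), and gets the vanishing (a) from Kempf's lemma plus normality. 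Without this input, or an equivalent computation, your argument identifies the fusion product only with some quotient-or-sub of the claimed module, not with the generalized Demazure module itself.
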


This result was mentioned in \cite[Conjecture 4.17]{Rav2} as a conjecture. Note also that it generalizes some known results about fusion products, previously proved by non-trivial manipulations with generators and relations. Say, if we let $\mu = 0$, $\ell_1 = \hdots = \ell_k$, we get \cite[Corollary 5]{FoLi2}.  If we let $\ell_1 = \hdots = \ell_{k + 1}$, we get \cite[Theorem 1]{VV}, earlier estabilished in some particular cases in \cite{Ven, CSVW}. If we let $\ell_1 = \hdots = \ell_{k}$ and $\mu$ is a multiple of the maximal root, we get \cite[Theorem 3.3]{Rav2}. If we let $\mu = 0$, all $\la_i^\svee$ are fundamental, and assume that $\g$ is of simply laced type, we get an alternative description of the fusion product from \cite[Theorem A]{Nao4}. See Remark \ref{relation to other fusion modules} for the geometric meaning of these results.

We hope that geometric techniques, similar to the one proposed in this paper, could be used to calculate the fusion product of other modules. See Remark \ref{fusion of quotient} and Remark \ref{fusion of evaluations} for possible directions.

\subsection{}
Another application of the geometric interpretation of Feigin--Loktev fusion product is the following.

Assume that $\g$ is of simply-laced type until the end of the Introduction.

Recall that in \cite{CW} the authors proved for $G = GL_n$ and conjectured for other groups that there is a cluster structure on the Grothendieck ring of the category of equivariant perverse coherent sheaves on the affine Grassmannian $\cP_{coh}^{G^{sc}(\cO)} (\Gr)$ (we do not encounter loop-rotation equivariance here for simplicity).

On the other hand, the cluster patterns of the category $\Rep \ U_q(\bold L \g)$ of finite-dimensional modules over the quantum loop group, were noticed by several authors (probably most well-known approach is due to Hernandez--Leclerc, see \cite{HL}; we do not appeal to it in the present paper). In particular, in \cite{Ked, DK2}, the cluster interpretation of Q-systems is suggested. The mutation matrix of the appearing cluster algebra is the same as conjectured in \cite{CW}. %


Despite this obvious similarity between the Grothendieck ring of the category $\cP_{coh}^{G^{sc}(\cO)} (\Gr)$ and the Q-systems ring, studied in \cite{Ked, DK2} (related to the Grothendieck ring of the category $\Rep \ U_q(\bold L \g)$), no precise statements about relation between these two categories seem to be present in the literature. We propose such a statement. Moreover, it helps to derive some new results about $\cP_{coh}^{G(\cO)} (\Gr)$, using the corresponding well-known results for $\Rep \ U_q(\bold L \g)$.

Q-systems are certain short exact sequences, whose terms are tensor products of Kirillov--Reshetikhin modules, which we denote by $KR_{i, \ell}$ (here $i$ is a vertex of Dynkin diagram and $\ell$ is a nonnegative integer). Thus, one can say that the role of cluster variables in the initial seed in the cluster algebra of \cite{Ked, DK2} is played by $KR_{i, \ell}$. The corresponding objects in $\cP_{coh}^{G^{sc}(\cO)} (\Gr)$ are simple real perverse coherent sheaves $\cP_{i, \ell}$ (see Section \ref{section coherent satake and quantum loop group} for definition).

For certain modules $M$ over $U_q(\bold L \g)$ their graded limit $M_{loc}$ is defined, which is a module over $\g[t]$ (see \cite{BCKV}). Then the following isomorphism of $\g[t]$-modules holds, which is a starting point of our connection:
\[
(KR_{i, \ell})_{loc} \simeq D(\ell, \ell \om_i^\svee) \simeq R\Gamma(\cP_{i, \ell})^*[-\frac{1}{2} \dim \Gr^{\om_i^\svee}],
\]
where $D(\ell, \ell \om_i^\svee)$ is an affine Demazure module
(note that $R\Gamma(\cP_{i, \ell})$ is concentrated in the cohomological degree $\frac{1}{2} \dim \Gr^{\om_i^\svee}$). We extend this isomorphism to ``products'' of these objects. Here ``product'' of perverse coherent sheaves is their convolution $\star$, ``product'' of $\g[t]$-modules is their Feigin--Loktev fusion product, and ``product'' of quantum loop group modules is just their tensor product (note that the relation between fusion product and convolution of perverse coherent sheaves on the affine Grassmannian for $PGL_2$ was stated in \cite[Proposition 8.5]{BFM}).

\begin{thmc}
	(Propositions \ref{sections of convolution of simple perverse sheaves}, \ref{graded limit of product of KR}, \ref{relation between perverse coherent and quantum group}) Suppose $\g$ is of simply-laced type. Let $i_1, \hdots, i_k$ be vertices of the Dynkin diagram and $\ell_1 \geq \hdots \geq \ell_k$ be nonnegative integers. Then
	\begin{multline*}
	(KR_{i_1, \ell_1} \T \hdots \T KR_{i_k, \ell_k})_{loc} \simeq D(\ell_1, \ell_1 \om_{i_1}^\svee) \ast \hdots \ast D(\ell_k, \ell_k \om_i^\svee) \simeq \\
	R \Gamma (\cP_{i_1, \ell_1} \star \hdots \star \cP_{i_k, \ell_k})^*[-\frac{1}{2} \dim \Gr^{\om_{i_1}^\svee} - \hdots - \frac{1}{2} \dim \Gr^{\om_{i_k}^\svee}]
	\end{multline*}
\end{thmc}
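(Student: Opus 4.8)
The plan is to prove the two displayed isomorphisms separately, taking the single-object identity $(KR_{i,\ell})_{loc} \simeq D(\ell,\ell\om_i^\svee) \simeq R\Gamma(\cP_{i,\ell})^*[-\tfrac12\dim\Gr^{\om_i^\svee}]$ as the input and propagating it simultaneously along the three notions of product: tensor product of quantum loop group modules, Feigin--Loktev fusion product of $\g[t]$-modules, and convolution of perverse coherent sheaves. Concretely, I would first prove the rightmost isomorphism (the fusion product equals the shifted dual of the global sections of the convolution) and then the leftmost one (the graded limit of the tensor product equals the fusion product), these being Propositions \ref{sections of convolution of simple perverse sheaves} and \ref{graded limit of product of KR} respectively, with Proposition \ref{relation between perverse coherent and quantum group} recording the resulting dictionary between $\cP_{i,\ell}$ and $KR_{i,\ell}$.

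For the geometric isomorphism I would invoke Theorem~A. The scheme $X$ over $\bA^k$ is the global convolution diagram of the $k$ affine Grassmannians (Subsection \ref{subsection global convolution diagrams}), and $\cF$ is the shifted coherent $\mathrm{IC}$-sheaf whose restriction to the special fiber over $0$ is the convolution $\cP_{i_1,\ell_1}\star\hdots\star\cP_{i_k,\ell_k}$. Over the locus $\{c_i \neq c_j\}$ the factorization property \eqref{factorization of modules} is exactly the $k$-fold external tensor product of the single-object identity, giving $H^0(X\vert_\bc,\cF\vert_\bc)^* \simeq D(\ell_1,\ell_1\om_{i_1}^\svee)(c_1)\T\hdots\T D(\ell_k,\ell_k\om_{i_k}^\svee)(c_k)$. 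Theorem~A then yields at once that the fusion product is independent of the constants and equals $H^0(X\vert_0,\cF\vert_0)^* = R\Gamma(\cP_{i_1,\ell_1}\star\hdots\star\cP_{i_k,\ell_k})^*[-\sum_j\tfrac12\dim\Gr^{\om_{i_j}^\svee}]$; this is also the specialization of Theorem~B to fundamental coweights with trivial $\mu$, now equipped with its coherent-Satake model.

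For the graded-limit isomorphism I would use the universal property of the fusion product. The graded limit $(KR_{i_1,\ell_1}\T\hdots\T KR_{i_k,\ell_k})_{loc}$ is a graded cyclic $\g[t]$-module generated by the tensor product of the highest-weight vectors, and the individual identifications $(KR_{i_j,\ell_j})_{loc}\simeq D(\ell_j,\ell_j\om_{i_j}^\svee)$ together with the ordering $\ell_1\geq\hdots\geq\ell_k$ produce a surjection from the fusion product $D(\ell_1,\ell_1\om_{i_1}^\svee)\ast\hdots\ast D(\ell_k,\ell_k\om_{i_k}^\svee)$ onto this graded limit; well-definedness of the target is already secured by the geometric isomorphism above. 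A character comparison then forces the surjection to be an isomorphism: the character of the tensor product of Kirillov--Reshetikhin modules is governed by the $Q$-system recursion, which under the ordering hypothesis matches the character of the generalized Demazure module realizing the fusion product.

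The main obstacle I expect is the geometric input to the second step: verifying that the global convolution of the sheaves $\cP_{i_j,\ell_j}$ is flat over $\bA^k$ with the correct factorization of fibers, and, most delicately, that its derived global sections remain concentrated in the single cohomological degree $\sum_j\tfrac12\dim\Gr^{\om_{i_j}^\svee}$ so that $H^0$ loses no information and the technical hypotheses of Theorem~A apply verbatim. Establishing this exactness of $R\Gamma$ along the convolution, together with the precise shift, is where the real content lies; once it is in place, both isomorphisms reduce to Theorem~A and a character count, and one should check that the same ordering $\ell_1\geq\hdots\geq\ell_k$ simultaneously guarantees cyclicity of the graded limit and the vanishing of higher cohomology on the geometric side.
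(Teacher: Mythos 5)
Your geometric half is essentially the paper's route. The chain $D(\ell_1,\ell_1\om_{i_1}^\svee)\ast\hdots\ast D(\ell_k,\ell_k\om_{i_k}^\svee)\simeq \bigl(R\Gamma(\cP_{i_1,\ell_1}\star\hdots\star\cP_{i_k,\ell_k})[-\sum_j\langle\rho,\om_{i_j}^\svee\rangle]\bigr)^*$ is obtained exactly as you describe: Theorem \ref{geometric fusion} applied to the closure of a $G_{\bA^k}(\cO)$-orbit in the global convolution diagram with the line bundle $\cL^{(\ell_1,\hdots,\ell_k)}$ (this is Corollary \ref{fusion of general demazures} with $\mu=0$), combined with the observation that $\cP_{i_1,\ell_1}\star\hdots\star\cP_{i_k,\ell_k}=R\theta_*$ of that line bundle on the special fiber, so $R\Gamma$ of the convolution equals $R\Gamma$ of the fiber over $0$. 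The technical points you flag as the real content are indeed the ones the paper must verify, and it does so via Proposition \ref{flatness of global convolution} (flatness), Proposition \ref{sections of convolution diagram} (the \cite{LLM}-based computation of $H^0$ and vanishing of $H^{>0}$ on the convolution of Schubert varieties, which gives both the concentration of $R\Gamma$ in degree $\sum_j\langle\rho,\om_{i_j}^\svee\rangle$ and hypothesis (ii) of Theorem \ref{geometric fusion}), and \eqref{cohomology of spherical schubert variety} for the generic fibers. One small correction of emphasis: the sheaf fed into Theorem \ref{geometric fusion} is a line bundle on the global convolution diagram, not the IC-sheaf on $\Gr$ itself; the convolution only appears after pushing forward along $\theta$, which is harmless because $R\Gamma$ commutes with $R\theta_*$.

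The gap is in your treatment of the left-hand isomorphism, i.e.\ Proposition \ref{graded limit of product of KR}. The paper does not prove this: it is quoted verbatim as \cite[Theorem A]{Nao4}, and that theorem is a substantial result whose proof occupies a separate paper. Your sketch asserts that the single-object identifications ``together with the ordering $\ell_1\geq\hdots\geq\ell_k$ produce a surjection from the fusion product onto the graded limit,'' but no such surjection falls out of the ordering; constructing it requires showing that the defining relations of the fusion product of Demazure modules (themselves nontrivial to pin down) hold on the cyclic vector of the graded limit, and that is precisely the hard content of Naoi's theorem. The subsequent character comparison is also misstated: no $Q$-system recursion is needed, since both sides have the same $\g$-character as the plain tensor product $D(\ell_1,\ell_1\om_{i_1}^\svee)\T\hdots\T D(\ell_k,\ell_k\om_{i_k}^\svee)$ --- the issue is entirely the existence of the map, not the count. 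If you are content to cite \cite[Theorem A]{Nao4} (as the paper does), your argument closes up and Proposition \ref{relation between perverse coherent and quantum group} follows by reducing to $k=1$, where one uses $(KR_{i,\ell})_{loc}\simeq D(\ell,\ell\om_i^\svee)$ from \cite[Theorem 4]{FoLi2} and \eqref{cohomology of spherical schubert variety}; if you intend to reprove Naoi's theorem, the surjection must actually be constructed.
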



Further, let us point out how this connection can be exploited. In \cite{CW} the authors showed that the Grothendieck ring of $\cP_{coh}^{G^{sc}(\cO)} (\Gr)$ has a structure of cluster algebra for $G = GL_n$ and conjectured for other groups (\cite[Conjecture 1,10]{CW}). An important part of the proof for $GL_n$ is showing the existence of certain exact triangles (\cite[Proposition 2.17]{CW}), which correspond to cluster mutations of the initial cluster seed. We propose the analog of these exact triangles for all simply-laced types.

The idea of the proof is the following. We reduce the exactness of triple of sheaves to the exactness of global sections of these sheaves, shifted by large powers of the very ample (determinant) line bundle. The resulting $\g[t]$-modules turn out to be graded limits of Kirillov--Reshetikhin modules, which fit into Q-systems. Using \cite{CV}, this implies required exactness. So in short, we use the existence of Q-systems for modules over the loop quantum group to deduce their analogs for perverse coherent sheaves.

\begin{thmd}
	(See Theorem \ref{exact triples of perverse coherent} for precise statement) There are certain exact sequences of perverse coherent sheaves on the affine Grasmannian. They are analogs of Q-systems for modules over $U_q(\bold L \g)$ and they correspond to cluster relations in the Grothendieck ring of $\cP_{coh}^{G^{sc}(\cO)} (\Gr)$.
\end{thmd}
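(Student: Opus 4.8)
The plan is to produce, for each vertex $i$ of the Dynkin diagram and each $\ell\geq 1$, a triple whose three terms are convolutions of the simple perverse coherent sheaves $\cP_{i,\ell}$ mirroring the three monomials of the $Q$-system relation $[KR_{i,\ell}]^2=[KR_{i,\ell-1}][KR_{i,\ell+1}]+\prod_{j\sim i}[KR_{j,\ell}]$; concretely $\cA=\cP_{i,\ell}\star\cP_{i,\ell}$, $\cB=\cP_{i,\ell+1}\star\cP_{i,\ell-1}$, and $\cC=\cP_{j_1,\ell}\star\cdots\star\cP_{j_r,\ell}$, where $j_1,\dots,j_r$ are the neighbours of $i$, with morphisms $\cC\to\cA\to\cB$ still to be constructed. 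Note that all three are convolutions of the shape allowed by Theorem C, since the exponents are weakly decreasing after reordering. To verify that this is a short exact sequence in $\cP_{coh}^{G^{sc}(\cO)}(\Gr)$, I would reduce to a statement about global sections: each convolution above is supported on a finite-dimensional convolution Schubert variety $Y$, projective over a point, on which the determinant line bundle $\cL$ is ample, so by Serre vanishing the functor $R\Gamma(-\otimes\cL^{\otimes n})$ is, for $n\gg 0$, exact, concentrated in cohomological degree $0$, and detects exactness on the finitely many sheaves at hand. Hence a triangle of these perverse coherent sheaves is an exact triple if and only if $R\Gamma(-\otimes\cL^{\otimes n})$ carries it to a short exact sequence of graded, $\g[t]$-equivariant vector spaces.

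The next step is to identify these twisted global sections representation-theoretically, extending Theorem C to incorporate the twist. Tensoring by the level-one bundle $\cL$ should raise the level, so that $R\Gamma(\cP_{i,\ell}\otimes\cL^{\otimes n})^{*}$, up to the cohomological shift of Theorem C, is again an affine Demazure module, now of level $\ell+n$; correspondingly the twisted convolutions are sent by $R\Gamma(-)^{*}$ to fusion products of affine Demazure modules of the shape controlled by Theorem B, which are independent of the choice of constants and equal to graded limits of tensor products of Kirillov--Reshetikhin modules. This identification rests on the same global convolution diagram and factorization input that underlies Theorems A--C, now carried out with the extra ample twist.

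On the quantum loop group side I would invoke the known $Q$-system short exact sequences $0\to\bigotimes_{j\sim i}KR_{j,\ell}\to KR_{i,\ell}\otimes KR_{i,\ell}\to KR_{i,\ell+1}\otimes KR_{i,\ell-1}\to 0$. Applying the graded-limit functor $(-)_{loc}$ and using \cite{CV} --- which ensures that the graded limits of the modules in a $Q$-system fit together into an honest short exact sequence of $\g[t]$-modules --- produces the required exact triple of current-algebra modules. Matching this term by term with the twisted global sections of the previous paragraph shows that $R\Gamma(-\otimes\cL^{\otimes n})$ sends the proposed sheaf triple to a short exact sequence, and the reduction of the first paragraph then lifts this to an exact triple in $\cP_{coh}^{G^{sc}(\cO)}(\Gr)$. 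The cluster interpretation is immediate once exactness is established: the classes $[\cP_{i,\ell}]$ are the initial cluster variables, their mutation matrix agrees with the one conjectured in \cite{CW} and realized in \cite{Ked, DK2}, and the triples reproduce exactly the corresponding mutation relations in the Grothendieck ring.

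\textbf{The main obstacle} I anticipate is the passage between sheaves and their global sections, together with the construction and rigidification of the maps. First, one must check that the convolution of the $\cP_{i,\ell}$ remains perverse coherent, or at least that the twisted convolution has cohomology concentrated in a single degree, so that $R\Gamma(-\otimes\cL^{\otimes n})$ genuinely detects exactness rather than merely equality of Euler characteristics in the Grothendieck ring. Second, the maps $\cC\to\cA\to\cB$ must be produced functorially --- geometrically from the convolution diagrams, or transported from the $Q$-system across the correspondence of Theorem C --- and shown to assemble into a distinguished triangle; equality of classes is strictly weaker than exactness, so it is precisely the representation-theoretic input of \cite{CV} that rigidifies the triangle into a genuine exact sequence. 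Verifying that the twist by $\cL^{\otimes n}$ corresponds exactly to the expected level shift, compatibly with the factorization property, is the technical crux on which the whole reduction depends.
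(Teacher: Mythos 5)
Your overall strategy coincides with the paper's: reduce exactness of the sheaf triple to exactness of the twisted global sections $\Gamma(-\T\cL^{n})$ for $n\gg 0$ on the finite-dimensional variety $\ov\Gr^{2\om_i^\svee}$, identify these sections with fusion products of Demazure modules (graded limits of tensor products of Kirillov--Reshetikhin modules), and import exactness from the $\g[t]$-form of the Q-systems established in \cite{CV}. The concentration of the convolutions in a single cohomological degree is indeed proved exactly as you suggest (Serre vanishing plus the vanishing of higher cohomology on convolution Schubert varieties). However, there are two genuine problems with the proposal.

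First, your arrangement of the three terms is wrong. You propose $0\to\cC\to\cA\to\cB\to0$ with middle term $\cA=\cP_{i,\ell}\star\cP_{i,\ell}$, which in the Grothendieck ring gives the literal Q-system relation $[\cP_{i,\ell}]^2=\prod_{j\sim i}[\cP_{j,\ell}]+[\cP_{i,\ell+1}][\cP_{i,\ell-1}]$. The sequences that actually exist are $0\to\cP^\star_{i,\ell}\to\cP_{i,\ell+1}\star\cP_{i,\ell-1}\to\cP_{i,\ell}\star\cP_{i,\ell}\to0$ and its Grothendieck--Serre dual, giving $[\cP_{i,\ell+1}][\cP_{i,\ell-1}]=[\cP_{i,\ell}]^2+\prod_{j\sim i}[\cP_{j,\ell}]$; the two relations are incompatible, so only one can hold. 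The reason for the swap is the cohomological shift: on underlying coherent sheaves the natural surjection is the restriction $\cO_{\ov\Gr^{2\om_i^\svee}}\twoheadrightarrow\cO_{\ov\Gr^{2\om_i^\svee-\al_i^\svee}}$, i.e. (up to shift) $\cP_{i,\ell}\star\cP_{i,\ell}\to\cP^\star_{i,\ell}$, so in the \emph{coherent} t-structure the middle term is $\cP_{i,\ell}\star\cP_{i,\ell}$, matching the Q-system; but $\cP^\star_{i,\ell}$ sits one degree off from the other two, and rotating the triangle into the perverse heart puts $\cP_{i,\ell+1}\star\cP_{i,\ell-1}$ in the middle. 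This sign flip is precisely what turns the Q-system into a cluster exchange relation (it is the geometric counterpart of the renormalization by roots of unity in \cite{Ked, DK2}), so your version would not even yield the cluster relation you are after.

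Second, the step you defer --- producing the morphisms of sheaves --- is the actual crux, and the proposal does not resolve it. Having, for each $n$ separately, a short exact sequence of $\g[t]$-modules on twisted sections does not by itself give maps of sheaves: by the $G^{sc}(\cO)$-equivariant Serre equivalence on $\ov\Gr^{2\om_i^\svee}$ one must check that the maps of \cite{CV} are linear over the homogeneous coordinate ring $\bigoplus_{n\geq 0}\Gamma(\cL^{n})$, i.e. compatible with multiplication across different $n$. The paper verifies this: the surjection onto $\cP^\star$ is geometric (restriction to a closed subvariety, hence automatically a module map), and for the remaining map one uses that $\Gamma(\cP_{i,0}\star\cP_{i,0}\T\cL^{n})^*$ is $\g[t]$-cyclic and that both compositions in the relevant square send the cyclic vector to the tensor product of cyclic vectors, forcing the square to commute. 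Without this argument (or a substitute), one obtains only an identity of classes in the Grothendieck group, which, as you yourself note, is strictly weaker than the existence of an exact triple.
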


Note that in \cite[Subsection 6.4]{CW} it is already stated, that taking global sections reduces these short exact sequences of sheaves to the graded limits of Q-systems. However, the authors only take into account the $\g$-action on the global sections, which seems to be too weak structure to go in the opposite direction (deduce Theorem D from the existence of  Q-systems). Taking into account $\g[t]$-action resolves this problem (note also that in general the graded limit of $KR_{i, \ell}$ is not isomorphic to $V(\ell \om_i)$ outside of type A).

We expect that a similar proof can be given in non-simply laced case as well.

\subsection{}
The paper is organized as follows.

In Section 1 we fix the notation, define all required algebraic and geometric objects and recall their properties to be used.

In Section 2 we present our main result, which gives a geometric interpretation of the Feigin--Loktev fusion product (Theorem \ref{geometric fusion}). We apply it to the global convolution diagram and compute the fusion product in the new case in Corollary \ref{fusion of general demazures}.

In Section 3 we prove a relation between the Feigin--Loktev fusion product and convolution of perverse coherent sheaves (Proposition \ref{sections of convolution of simple perverse sheaves}). That allows to state a connection between categories of equivariant perverse coherent sheaves on the affine Grassmannian and modules over the quantum loop group (Proposition \ref{relation between perverse coherent and quantum group}). Using it, we deduce the existence of short exact sequences of perverse coherent sheaves (Theorem \ref{exact triples of perverse coherent}).

\section*{Acknowledgments}
This paper owes its existence to Michael Finkelberg. Most of the ideas presented here appeared during conversations with him. It is also my pleasure to thank Evgeny Feigin, Sabin Cautis, Harold Williams, Roman Travkin, Roman Bezrukavnikov, and Ivan Karpov for useful discussions. I am also grateful to the anonymous referee for valuable comments and suggestions.

I am also indebted to Michael Finkelberg and Alexander Popkovi\v{c} for reading the draft of this paper and correcting a lot of mistakes.	
	
\section{Generalities}\label{section generalities}

\subsection{Notations}
We fix $\g$ to be a semisimple Lie algebra over $\bC$. $I$ denotes the set of its Dynkin diagram vertices.
Let $Q$, $Q^\vee$, $P$, $P^\vee$ be its lattices of roots, coroots, weights, and coweights correspondingly. By $P^+$ ($P^{\vee +}$) we denote the cone of dominant weights (coweights). 
The invariant form on $Q^\vee$, such that the norm of short coroot equals 2, yields the inclusion $Q^\vee \hookrightarrow P$, which extends to the inclusion of lattices $\iota: P^\vee \hookrightarrow P$, which is an isomorphism iff $\g$ is of simply-laced type. By the partial relation $<$ on $P, P^\vee$ we mean the dominance order. For $\la \in P^+$ we denote by $V(\la)$ the corresponding irreducible $\g$-module.
Let $\rho$ be the half sum of positive roots, or, equivalently, the sum of all fundamental weights.

Consider the affine Kac-Moody Lie algebra $\gh$ \footnote{We use this place to fix a mistake from our previous paper \cite{DFF}: in Section 1.3 we claimed that affine dominant weights of level 1 are in bijeciton with $P^\vee/Q^\vee$. This is false: for example, in type $G_2$, there are two level 1 dominant weights ($\Lambda_0$ and $\Lambda_0 + \om$, where $\om$ is the short fundamental weight), but $|P^\vee/Q^\vee| = 1$. This is not used anywhere later in that paper, and all results of that paper hold true.}; let $\fI \subset \g[t] \subset \gh$ be the Iwahori subalgebra. $\Ih$ denotes the set of affine Dynkin diagram vertices. Let $\widetilde{W} = W \ltimes Q^\vee$ be the affine Weyl group and 
$\widehat{W} = W \ltimes P^\vee$ be the extended affine Weyl group. For a coweight $\la^\svee$ we denote by $\tau_{\la^\svee}$ the corresponding (translation) element of $\widehat{W}$. 
By $w_0$ we denote the longest element of $W$.  

We denote by $\widehat{P}$ ($\widehat{P}^+$) the set of affine weights (dominant weights). $\La_0$ denotes the vacuum weight.

\subsection{Feigin--Loktev fusion product} \label{FL fusion profuct}
In \cite{FeLo} the following operation was introduced. Let $M_1, \hdots, M_k$ be cyclic graded finite-dimensional modules over $\g[t]$. We assume that each $M_i$ has a unique up to scaling highest $\g$-weight vector $m_i$, and this vector is cyclic. For $c_i \in \bC$ we define $M_i(c_i)$ to be a $\g[t]$-module, which is obtained as a composition of the twist $\g[t] \rightarrow \g[t]$, $xt^s \mapsto x(t + c_i)^s$ (here $x \in \g$), with the action of $\g[t]$ on $M_i$ (so $M_i(c_i)$ is isomorphic to $M_i$ as a vector space, but $xt^s \in \g[t]$ acts as $x(t + c_i)^s$). $M_i(c_i)$ is cyclic (with the same cyclic vector $m_i$), but no longer graded. By \cite[Proposition 1.4]{FeLo}, for $c_i \neq c_j$ the tensor product
\[
M_1(c_1) \T \hdots \T M_k(c_k)
\]
is cyclic with the cyclic vector $m_1 \T \hdots \T m_k$.
The grading on $U(\g[t])$ by $t$-degree induces the filtration on this tensor product:
\[
F_{\leq n} ( M_1(c_1) \T \hdots \T M_k(c_k) ) = U(\g[t])_{\leq n} (m_1 \T \hdots \T m_k).
\]
The Feigin--Loktev fusion product is defined as the associated graded module:
\[
M_1 \ast \hdots \ast M_k = \gr_{F}  (M_1(c_1) \T \hdots \T M_k(c_k)).
\]

It was conjectured in \cite{FeLo} that the resulting module does not depend on the choice of parameters $c_i$, and that the operation of fusion product is associative.

It is beyond our current possibilities to prove this conjecture for arbitrary modules $M_i$. But one suitable family of modules for which
some results can be obtained are the affine Demazure modules.

\subsection{Affine Demazure modules}  \label{affine demazure modules}
We refer to \cite{BCKV, CV, Nao2, Rav2} for details on (generalized) affine $\g[t]$-invariant Demazure modules.

To an integrable dominant affine weight $\La \in \widehat{P}^+$ we associate the irreducible highest weight $\gh$-representation $V(\La)$.
Let $\La_1, \hdots, \La_k \in \widehat{P}^+$, and $\xi_1, \hdots, \xi_k \in \widehat{P}$, such that $\xi_i \in \widetilde{W} \La_i$ for any $i$. Since dimensions of weight spaces of $V(\La)$ are invariant under the action of $\widetilde{W}$, for any $i$ there is a unique (up to scaling) vector $v_{\xi_i} \in V(\La_i)$ of weight $\xi_i$. The generalized affine Demazure module is defined as 
\[
\bold D(\xi_1, \hdots, \xi_k) := U(\fI). (v_{\xi_1} \T \hdots \T v_{\xi_k}) \subset V(\La_1) \T \hdots V(\La_k).
\]
By definition, it is a module over $\fI$, but in fact, if $\langle \xi_i, \alch \rangle \leq 0$ for any $i$ and positive coroot $\alch$, it is $\g[t]$-invariant (recall that $\fI \subset \g[t]$).

The most well-known case is $k = 1$. In this case the module $\bold D(w \La)$ for $w \in \widehat{W}, \La \in \widehat{P}^+$ (sometimes we will denote it by $V_w (\La)$) is called the affine Demazure module (note that if $w$ is an element of $\widehat{W}$ rather than of $\widetilde{W}$, the vector of weight $w\La$ may lie not in $V(\La)$ but in some other irreducible highest weight $\gh$-module).

The $\g[t]$-invariant affine Demazure modules are naturally parametrized by the set $\bZ_{\ge 0} \times P^+$.
Namely, if for $w \in \widehat{W}, \La \in \widehat{P}^+, \mu \in P^+$ one has $w  \La = \ell \La_0 + w_0\mu$, then  $\bold D(w \La)$ is $\g[t]$-invariant. We denote such a module by $D(\ell, \mu)$. 
$D(\ell, \mu)$ is naturally a graded cyclic highest weight $\g[t]$-module with cyclic vector of weight $\mu$. Note that in this case $w$ can me presented in a form $\tau_{w_0 \nu^\svee}v$ with $\nu^\svee \in \widehat{P}^{\vee +}$, $v \in W$. 

The modules of the form $D(\ell, \ell \iota(\la^\svee))$, where $\la^\svee$ is a dominant coweight, are particularly well-studied (see, for instance, \cite{FoLi1, FoLi2}). For them the corresponding (extended) affine Weyl group element is $\tau_{w_0\la^\svee}$, and the affine dominant weight is $\ell \La_0$, 
so $D(\ell, \ell\iota(\la^\svee)) \simeq \bold D(\tau_{w_0\la^\svee}(\ell \La_0) )$. 
We omit the symbol $\iota$ in their notation and simply write $D(\ell, \ell \la^\svee)$ (note that the authors of \cite{FoLi2}, and some other papers use the notation $D(\ell, \la^\svee)$ for the same modules).

\subsection{Affine Grassmannian and affine flag variety}
Fix the notations $\cK = \bC((t)), \cO = \bC[[t]]$.
Let $G^{sc}$ be the simply-connected group, corresponding to $\g$, and $G = G^{ad}$ be the adjoint group.

We denote by $\Gr$ the affine Grassmannian of the group $G = G^{ad}$.
It has the left action by the group $G^{sc}(\cO)$. All sheaves we consider are considered as $G^{sc}(\cO)$-equivariant sheaves. Global sections of such sheaves are naturally $G^{sc}(\cO)$-representations (although we usually consider them as $\g[t]$-representations). Informally speaking, consideration of $G^{sc}(\cO)$-equivariant sheaves on $\Gr_{G^{ad}}$ allows us to get as many $\g[t]$-representations geometrically as possible. See \cite[Section 3]{FT} for consideration of $G^{sc}(\cO)$-equivariant sheaves on $\Gr_{G^{ad}}$ (in non-simply-laced types the twisted version is considered there, which we do not need here). Note also that it agrees with \cite[Conjecture 1.10]{CW}, which we are going to address in Section 3.

There is a very ample $G^{sc}(\cO)$-equivariant line bundle, called the determinant line bundle, and denoted by $\cL$. For any $\la^\svee \in P^\vee$, there is a point $t^{\la^\svee} \in \Gr$. All $G(\cO)$-orbits on $\Gr$ are of the form $G(\cO) t^{\la^\svee}$ for $\la^{\svee} \in P^{\vee +}$, and we denote $\Gr^{\la^\svee} = G(\cO) t^{ \la^\svee}$. The closures of these orbits are called the spherical affine Schubert varieties. Their dimensions are given by: $\dim \ov \Gr^{\la^\svee} = \langle 2 \rho, \la^\svee \rangle$. 
The Borel--Weil--Bott-type theorem claims that for $\ell \geq 0$ there are isomorphisms of $\g[t]$-modules (see \cite[Theorem 8.2.2 (a),(c)]{Kum}):
\begin{equation} \label{cohomology of spherical schubert variety}
\begin{aligned}
H^0(\ov \Gr^{\la^\svee}, \cL^{ \ell}) &\simeq D(\ell, \ell \la^\svee)^*, \\ 
H^{> 0}(\ov \Gr^{\la^\svee}, \cL^{ \ell}) &\simeq 0.
\end{aligned}
\end{equation}

Let $\Fl$ be the affine flag variety of $G$. It has a line bundle, corresponding to any affine weight $\La$, denote this bundle by $\cL_\La$. For any $w \in \widehat{W}$ there is a point $t^{w} \in \Fl$. These points represent different orbits under the (left) action of the Iwahori group $\cI \subset G(\cO)$, 
we denote these orbits by $\Fl^w$.
The closures of these orbits are called the affine Schubert varieties. 
For dominant $\La$ there is an isomorphism of $\fI$-modules (we refer to \cite[Theorem 8.2.2 (a),(c)]{Kum} again):
$H^0(\ov \Fl^w, \cL_\La) \simeq \bold D(w\La)^*$.
If $w$ has the form $\tau_{w_0 \nu^\svee} v$ with $\nu^\svee \in \widehat{P}^{\vee +}$, $v \in W$, then  $\ov \Fl^w$ is $G(\cO)$-invariant, in which case $\bold D (w\La)$ is also $\g[t]$-invariant, and it is an isomorphism of $\g[t]$-modules. Namely, if $\La \in \widehat{P}^+$ is dominant, $w\La = \ell\La_0 + w_0\mu$, and $\mu \in P^+$ is dominant, then one has isomorphisms of $\g[t]$-modules:
\begin{equation} \label{cohomology of schubert variety}
\begin{aligned}
H^0(\ov \Fl^w, \cL_\La) &\simeq D(\ell, \mu)^*, \\
H^{> 0}(\ov \Fl^w, \cL_\La) &\simeq 0.
\end{aligned}
\end{equation}

One also has a natural projection $\Fl \rightarrow \Gr$, which a locally trivial fibration with fiber $G/B$. It restricts to the fibration
\begin{equation} \label{fibration of flags over gr}
\ov \Fl^{\tau_{w_0 \lach} w_0} \rightarrow \ov \Gr^\lach
\end{equation}
with the same fiber $G/B$; here $\lach \in P^{\vee +}$ is dominant.

\subsection{Convolution diagrams}
The convolution diagram of affine Grassmannians is defined as:
\[
\Gr^{\conv k} = \underbrace{G(\cK) \times^{G(\cO)} \hdots \times^{G(\cO)} G(\cK) \times^{G(\cO)} \Gr}_{k},
\]
where $X \times^{G(\cO)} Y$ means $X \times Y$ modulo the relation $(xg, y) \sim (x, gy)$ for $g \in G(\cO)$ (we mean natural right and left actions of $G(\cO)$ here). 

Let $p_\Gr: G(\cK) \rightarrow \Gr$ and $p_\Fl: G(\cK) \rightarrow \Fl$ be the natural projections. We also define
\[
\ov \Gr^{\la^\svee_1} \conv \hdots \conv \ov \Gr^{\la^\svee_k} = p_{\Gr}^{-1}(\ov \Gr^{\la^\svee_1}) \times^{G(\cO)} \hdots \times^{G(\cO)} p_{\Gr}^{-1}(\ov \Gr^{\la^\svee_{k - 1}}) \times^{G(\cO)} \ov \Gr^{\la^\svee_k}.
\]

In what follows, we also need the convolution diagram of affine Grasmannians and an affine flag variety:
\[
\Gr^{\conv k} \conv \Fl = \underbrace{G(\cK) \times^{G(\cO)} \hdots \times^{G(\cO)} G(\cK)}_{k} \times^{G(\cO)} \Fl.
\]
If $w \in \widehat{W}$ is such that $\ov \Fl^w$ is $G(\cO)$-invariant, we can define
\[
\ov \Gr^{\la^\svee_1} \conv \hdots \conv \ov \Gr^{\la^\svee_k} \conv \ov \Fl^w = p_{\Gr}^{-1}(\ov \Gr^{\la^\svee_1}) \times^{G(\cO)} \hdots \times^{G(\cO)} p_{\Gr}^{-1}(\ov \Gr^{\la^\svee_{k}}) \times^{G(\cO)} \ov \Fl^w.
\]

For any $1 \leq i \leq k$ we can define the (partial) convolution morphism 
\begin{equation} \label{partial convolution morphisms}
\begin{aligned}
\theta_i: \Gr^{\conv k} \conv \Fl &\rightarrow \Gr \\
(g_1, \hdots, g_k, x) &\mapsto p_\Gr(g_1\cdot \hdots \cdot g_i),
\end{aligned}
\end{equation}
and the morphism $\theta_{k+1}: \Gr^{\conv k} \conv \Fl \rightarrow \Fl$ is defined similarly.
The morphisms $\theta_i$ all together define an isomorphism:
\begin{equation} \label{convolution isomorphic to product}
\Theta = (\theta_1, \hdots, \theta_{k + 1}): \Gr^{\conv k} \conv \Fl \xrightarrow{\sim} \underbrace{\Gr \times \hdots \Gr}_k \times \Fl.
\end{equation}

For $(\ell_1, \hdots, \ell_k, \La) \in \bZ^k \oplus \widehat{P}$ we denote by $\cL^{(\ell_1, \hdots, \ell_k, \La)}$ 
the following line bundle on $\Gr^{\conv k} \conv \Fl$: 
\begin{equation} \label{definition of line bindle L}
\cL^{(\ell_1, \hdots, \ell_k, \La)} := \Theta^*( \cL^{\ell_1 - \ell_2} \boxtimes \hdots \boxtimes \cL^{\ell_k - \ell} \boxtimes \cL_\La),
\end{equation}
where $\La$ has level $\ell$.



\begin{prop} \label{sections of convolution diagram}
Let $\la^\svee_1, \hdots, \la_k^\svee \in P^{\vee +}$, $\La \in \widehat{P}^+$, $w \in \widehat{W}$, such that $w \La = \ell \La_0 + w_0 \mu$, $\mu \in P^+$. Let $\ell_1 \geq \hdots \geq \ell_k \geq \ell \geq 0$. Then one has an isomorphism of $\g[t]$-modules
\begin{multline*}
H^0(\ov \Gr^{\la^\svee_1} \conv \hdots \conv \ov \Gr^{\la^\svee_k} \conv \ov \Fl^w, \cL^{(\ell_1, \hdots, \ell_k, \La)})^* \\
\simeq \bold D \Bigl( \tau_{w_0 \la_1^\svee} (\ell_1 - \ell_2)\La_0, \tau_{w_0(\la_1^\svee + \la_2^\svee)} (\ell_2 - \ell_3)\La_0, \hdots, \tau_{w_0(\la_1^\svee + \hdots + \la_k^\svee)}(\ell_k - \ell) \La_0, \tau_{w_0(\la_1^\svee + \hdots + \la_k^\svee)} w \La \Bigr),
\end{multline*}
and 
\[
H^{> 0}(\ov \Gr^{\la^\svee_1} \conv \hdots \conv \ov \Gr^{\la^\svee_k} \conv \ov \Fl^w, \cL^{(\ell_1, \hdots, \ell_k, \La)}) = 0.
\]
\end{prop}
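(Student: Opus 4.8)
The plan is to induct on $k$, peeling off the leftmost factor $\ov\Gr^{\la_1^\svee}$ and computing the pushforward of $\cL^{(\ell_1,\hdots,\ell_k,\La)}$ along the projection that remembers only the first coordinate. For $k=0$ the scheme is $\ov\Fl^w$, the line bundle is $\cL_\La$, and the claim reduces to the Borel--Weil--Bott statement \eqref{cohomology of schubert variety} together with the observation that the empty-sum coweight makes $\tau_{w_0(\la_1^\svee+\hdots+\la_k^\svee)}=e$; so it suffices to treat the inductive step. Throughout one works with a finite-dimensional quotient of $G(\cO)$ through which all relevant actions factor, so that the convolution fibrations and associated-bundle constructions below are literally those of finite-dimensional geometry.

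First I would write $Y=\ov\Gr^{\la_2^\svee}\conv\hdots\conv\ov\Fl^w$ and consider the projection $\pi\colon X=\ov\Gr^{\la_1^\svee}\conv Y\to\ov\Gr^{\la_1^\svee}$, $(g_1,\hdots,x)\mapsto p_\Gr(g_1)$, which realizes $X$ as the fibration with fiber $Y$ associated to the $G(\cO)$-torsor $p_\Gr^{-1}(\ov\Gr^{\la_1^\svee})\to\ov\Gr^{\la_1^\svee}$. The crucial point is how $\cL^{(\ell_1,\hdots,\ell_k,\La)}$ restricts along $\pi$. Since $\theta_1$ factors through $\pi$, the factor $\theta_1^*\cL^{\ell_1-\ell_2}$ is pulled back from the base; for $i\ge 2$ the map $\theta_i(g_1,\hdots)=p_\Gr(g_1\cdots g_i)$ restricts on the fiber over $p_\Gr(g_1)$ to the $g_1$-translate of the $i$-th convolution map for $Y$, so that $G(\cK)$-equivariance of $\cL$ (up to its standard central twist) identifies the restriction of $\cL^{(\ell_1,\hdots,\ell_k,\La)}$ to each fiber with the convolution line bundle $\cL^{(\ell_2,\hdots,\ell_k,\La)}$ on $Y$ after the evident index shift coming from \eqref{definition of line bindle L} and \eqref{convolution isomorphic to product}.

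By the inductive hypothesis $H^{>0}(Y,\cL^{(\ell_2,\hdots,\ell_k,\La)})=0$ and $H^0(Y,\cL^{(\ell_2,\hdots,\ell_k,\La)})^*$ is the generalized Demazure module $\bold D(\tau_{w_0\la_2^\svee}(\ell_2-\ell_3)\La_0,\hdots,\tau_{w_0(\la_2^\svee+\hdots+\la_k^\svee)}w\La)$. Base change then forces $R^{>0}\pi_*\cL^{(\ell_1,\hdots,\ell_k,\La)}=0$ and identifies $\pi_*\cL^{(\ell_1,\hdots,\ell_k,\La)}$ with the associated vector bundle $\cV$ on $\ov\Gr^{\la_1^\svee}$ with fiber the $G(\cO)$-module $H^0(Y,\cL^{(\ell_2,\hdots,\ell_k,\La)})$, twisted by $\cL^{\ell_1-\ell_2}$. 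Hence $H^*(X,\cL^{(\ell_1,\hdots,\ell_k,\La)})=H^*(\ov\Gr^{\la_1^\svee},\cV)$, and the final step is to pull this computation up along the $G/B$-fibration \eqref{fibration of flags over gr} $\ov\Fl^{\tau_{w_0\la_1^\svee}w_0}\to\ov\Gr^{\la_1^\svee}$, where by \eqref{cohomology of schubert variety} and the description of generalized Demazure modules via the Demazure functors $F_w$ in \cite{Nao2}, the cohomology of $\cV$ is dual to the module obtained by prepending the new factor $\tau_{w_0\la_1^\svee}(\ell_1-\ell_2)\La_0$ and left-multiplying every entry of the inner module by $\tau_{w_0\la_1^\svee}$. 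This is exactly the asserted $\bold D(\tau_{w_0\la_1^\svee}(\ell_1-\ell_2)\La_0,\hdots,\tau_{w_0(\la_1^\svee+\hdots+\la_k^\svee)}w\La)$, while the vanishing $H^{>0}(\ov\Gr^{\la_1^\svee},\cV)=0$ is a Kempf-type vanishing that uses the hypothesis $\ell_1\ge\ell_2\ge\hdots\ge\ell_k\ge\ell\ge 0$, ensuring the twist $\cL^{\ell_1-\ell_2}$ is dominant at every stage of the induction.

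The hard part will be this last identification: matching the cohomology of the twisted associated bundle $\cV$ on the spherical Schubert variety with the Demazure-functor presentation of $\bold D$, while keeping track of the $G(\cK)$-equivariant central twists of $\cL$ under convolution and of the $G^{sc}$-versus-$G^{ad}$ bookkeeping, so that precisely the translation elements $\tau_{w_0(\la_1^\svee+\hdots+\la_i^\svee)}$ appear in each slot. Maintaining the higher-cohomology vanishing uniformly across the induction, which is exactly where the chain of inequalities among the $\ell_i$ is consumed, is the other delicate point.
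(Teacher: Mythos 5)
Your strategy --- inducting on $k$ by pushing $\cL^{(\ell_1,\hdots,\ell_k,\La)}$ forward along the fibration $\pi\colon \ov\Gr^{\la_1^\svee}\conv Y\to\ov\Gr^{\la_1^\svee}$ with fiber $Y=\ov\Gr^{\la_2^\svee}\conv\hdots\conv\ov\Fl^w$ --- is genuinely different from the paper's, which never peels off whole spherical factors: it instead maps the Bott--Samelson variety $Z_{\bi}$ (built from reduced words for the $\tau_{w_0\la_n^\svee}w_0$ and for $w$) onto the convolution variety and transports the entire computation, both $H^0$ and the higher vanishing, from \cite[Theorem 6]{LLM} together with Kempf's lemma and normality. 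The reason the paper takes that route is precisely the step you flag as ``the hard part,'' and as written your argument has a genuine gap there. After base change you are left with a $G(\cO)$-equivariant vector bundle $\cV$ of rank $\dim H^0(Y,\cL^{(\ell_2,\hdots,\ell_k,\La)})>1$ on the (singular) Schubert variety $\ov\Gr^{\la_1^\svee}$, twisted by $\cL^{\ell_1-\ell_2}$. Neither of the two things you need about it is formal: (a) the vanishing $H^{>0}(\ov\Gr^{\la_1^\svee},\cL^{\ell_1-\ell_2}\T\cV)=0$ is not a Kempf-type statement --- Kempf vanishing is a line-bundle result, and when $\ell_1=\ell_2$ (allowed by the hypotheses) there is no ample twist at all, so the vanishing must come entirely from the structure of the fiber module; (b) the identification of $H^0(\ov\Gr^{\la_1^\svee},\cL^{\ell_1-\ell_2}\T\cV)^*$ with $F_{\tau_{w_0\la_1^\svee}w_0}$ applied to (the twist of) the inner module, which is what Naoi's algebraic description of $\bold D$ would require, presupposes the geometric realization of Demazure functors as global sections of associated bundles together with higher vanishing. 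Both statements hold only because the fiber module $H^0(Y,\cL^{(\ell_2,\hdots)})^*$ admits an excellent/Demazure filtration compatible with the $\fI$-action; establishing that is essentially equivalent in depth to the standard monomial theory of \cite{LLM} that the paper invokes, so it cannot simply be cited as ``base change plus Kempf.''

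There is also a smaller but real issue in the descent step: you assert that $G(\cK)$-equivariance of $\cL$ ``up to its standard central twist'' identifies the fiberwise restriction of $\theta_i^*\cL^{\ell_i-\ell_{i+1}}$ with the corresponding convolution bundle on $Y$. Fiberwise this is true because any translate of $\cL$ is abstractly isomorphic to $\cL$, but to identify $\pi_*$ with a specific \emph{associated} bundle $\cV$ you need the level-$\ell_2$ equivariant structure on the restriction to descend coherently through the $\times^{G(\cO)}$ construction; this is exactly the central-charge bookkeeping that makes $\cL^{(\ell_1,\hdots,\ell_k,\La)}$ well defined in \eqref{definition of line bindle L}, and it should be spelled out rather than waved at. If you want to salvage the inductive approach, the cleanest fix is to refine the induction all the way down to $\bP^1$-fibrations (i.e.\ to the Bott--Samelson resolution), where the one-step Demazure-functor identification and the $H^{>0}$ vanishing for excellently filtered fibers are available in the literature --- but at that point you have essentially reconstructed the proof of \cite[Theorem 6]{LLM}, which is what the paper uses directly.
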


\begin{proof}
Our main tool is \cite[Theorem 6]{LLM}, which is a very similar result, and we reduce our claim to this theorem. Note that this theorem is stated in notations of the Lie group of finite type, but as claimed in Section 1.1 of \textit{loc. cit.}, the results are valid and straightforward to generalize for any Kac--Moody type, including the required for us affine case.

Recall the projection \eqref{fibration of flags over gr}. Evidently, we also have a locally trivial fibration
\begin{equation} \label{convolution of flags to convolution of grassmannians}
p_{\Gr}^{-1}(\ov \Gr^{\la^\svee_1}) \times^\cI \hdots \times^\cI p_{\Gr}^{-1}(\ov \Gr^{\la^\svee_{k}}) \times^\cI \ov \Fl^w \rightarrow \ov \Gr^{\la^\svee_1} \conv \hdots \conv \ov \Gr^{\la^\svee_k} \conv \ov \Fl^w
\end{equation}
with fiber $(G/B)^k$. We denote the left-hand side of it by $\ov \Fl^{\tau_{w_0 \lambda^\svee_1}w_0} \conv \hdots \conv \ov \Fl^{\tau_{w_0 \lambda^\svee_k}w_0} \conv \ov \Fl^w$. Now, let
\[
\bold i = (i^1_1, \hdots, i^1_{\bm{\ell}_1}, \hdots, i^k_1, \hdots, i^k_{\bm{\ell}_k}, i_1, \hdots, i_{\bm{\ell}}),
\]
where all $i^n_j$ are in $\Ih$, and $(i^n_1, \hdots, i^n_{\bm{\ell}_1})$ is a reduced expression in the affine Weyl group of $\tau_{w_0 \lambda^\svee_n}w_0$ for any $1 \leq n \leq k$, and $(i_1, \hdots, i_{\bm{\ell}})$ is a reduced expression for $w$.

Consider the Bott--Samelson variety $Z_{\bi}$ (in the spirit of our notation, it might have been denoted $\ov \Fl^{s_{i_1^1}} \conv \hdots \conv \ov \Fl^{s_{i_{\bm \ell}}}$; see \cite[Chapters VII, VIII]{Kum} for detailed treatment of these varieties). 
There is a partial convolution morhism $Z_{\bm i} \rightarrow \ov \Fl^{\tau_{w_0 \lambda^\svee_1}w_0} \conv \hdots \conv \ov \Fl^{\tau_{w_0 \lambda^\svee_k}w_0} \conv \ov \Fl^w$.
Composing with \eqref{convolution of flags to convolution of grassmannians}, we have a surjective morphism $\phi$:
\[
Z_{\bi} \stackrel{\phi}{\rightarrow} \ov \Gr^{\la^\svee_1} \conv \hdots \conv \ov \Gr^{\la^\svee_k} \conv \ov \Fl^w.
\]
Now, clearly, the line bundle $\phi^* \cL^{(\ell_1, \hdots, \ell_k, \La)}$ on $Z_{\bi}$ has a form $\cL_{\bi, \bold m}$ for certain $\bold m$ in notation of \cite[1.4]{LLM}. Moreover, if $\cL^{(\ell_1, \hdots, \ell_k, \La)}$ is very ample, 
then $\ov \Gr^{\la^\svee_1} \conv \hdots \conv \ov \Gr^{\la^\svee_k} \conv \ov \Fl^w$ coincides with image of the natural map $Z_\bi \rightarrow \bP(H^0(Z_\bi, \phi^*\cL^{(\ell_1, \hdots, \ell_k, \La)})^*)$, denoted by $Z_{\bi, \bold m}$ in \cite{LLM}. 
Hence, in this case, the desired isomorphisms (at the level of $\fI$-modules) follow from \cite[Theorem~6]{LLM}. In the general case, the morphism $Z_\bi \rightarrow Z_{\bi, \bold m}$ factors through $\ov \Gr^{\la^\svee_1} \conv \hdots \conv \ov \Gr^{\la^\svee_k} \conv \ov \Fl^w$, and we argue as follows.

We have morphisms
\begin{multline*}
Z_{\bi} \stackrel{\phi}{\rightarrow} \ov \Gr^{\la^\svee_1} \conv \hdots \conv \ov \Gr^{\la^\svee_k} \conv \ov \Fl^w \rightarrow Z_{\bi, \bold m} \rightarrow \\ 
\bP(\bold D \Bigl( \tau_{w_0 \la_1^\svee} (\ell_1 - \ell_2)\La_0, \hdots, \tau_{w_0(\la_1^\svee + \hdots + \la_k^\svee)}(\ell_k - \ell) \La_0, \tau_{w_0(\la_1^\svee + \hdots + \la_k^\svee)} w \La \Bigr)),
\end{multline*}
where first two morphisms are surjective, as evident from the construction above, and the last morphism is constructed in \cite[Section 4.1]{LLM}, and it was shown there, that its image linearly spans the whole projective space. From this, we have the injective homomorphisms of $\fI$-modules
\begin{multline*}
\bold D \Bigl( \tau_{w_0 \la_1^\svee} (\ell_1 - \ell_2)\La_0, \hdots, \tau_{w_0(\la_1^\svee + \hdots + \la_k^\svee)}(\ell_k - \ell) \La_0, \tau_{w_0(\la_1^\svee + \hdots + \la_k^\svee)} w \La \Bigr)^* \stackrel{\al}{\hookrightarrow} \\
H^0(\ov \Gr^{\la^\svee_1} \conv \hdots \conv \ov \Gr^{\la^\svee_k} \conv \ov \Fl^w, \cL^{(\ell_1, \hdots, \ell_k, \La)}) \hookrightarrow H^0(Z_\bi, \phi^* \cL^{(\ell_1, \hdots, \ell_k, \La)}),
\end{multline*}
whose composition is proved to be bijective in \cite[Section 4.2]{LLM}. It follows, that $\al$ is an isomorphism. Moreover, it is evident from the construction of \cite[Section 4.1]{LLM} that the morphism
\[
\ov \Gr^{\la^\svee_1} \conv \hdots \conv \ov \Gr^{\la^\svee_k} \conv \ov \Fl^w \rightarrow 
\bP(\bold D \Bigl( \tau_{w_0 \la_1^\svee} (\ell_1 - \ell_2)\La_0, \hdots, \tau_{w_0(\la_1^\svee + \hdots + \la_k^\svee)}(\ell_k - \ell) \La_0, \tau_{w_0(\la_1^\svee + \hdots + \la_k^\svee)} w \La \Bigr))
\]
is in fact $G(\cO)$-equivariant, not just $\cI$-equivariant; hence, its dual morphism of sections $\al$ is also an isomorphism of $\g[t]$-modules. The first claim of the Proposition is proved.

For the part about the higher cohomologies, the same argument as for $Z_{\bold i, \bold m}$ in \cite[Section 4.2]{LLM} works. Namely, the morphism $Z_{\bi} \stackrel{\phi}{\rightarrow} \ov \Gr^{\la^\svee_1} \conv \hdots \conv \ov \Gr^{\la^\svee_k} \conv \ov \Fl^w$ satisfies the assumptions of the Kempf's lemma \cite{Kem}, hence normality of $\ov \Gr^{\la^\svee_1} \conv \hdots \conv \ov \Gr^{\la^\svee_k} \conv \ov \Fl^w$ implies that $H^{> 0} (\ov \Gr^{\la^\svee_1} \conv \hdots \conv \ov \Gr^{\la^\svee_k} \conv \ov \Fl^w, \cL^{(\ell_1, \hdots, \ell_k, \La)}) = H^{> 0} (Z_\bi, \phi^* \cL^{(\ell_1, \hdots, \ell_k, \La)}) = 0$, as required.
\end{proof}

In the same way, one can generalize \cite[Theorem 6]{LLM} for convolution of Schubert varieties of arbitrary partial flag variety of a Kac--Moody group. We will not need it here.

\subsection{Global convolution diagrams}  \label{subsection global convolution diagrams}
An important feature of the convolution diagrams is that they admit ``globalization''.  The global convolution diagram of the affine Grassmannians is particularly well-known, because it may be used to define the commutativity constraint in the geometric Satake equivalence.


We need a variant of this construction, the global convolution diagram of $k$ affine Grassmannians and one affine flag variety.

We define $\Gr_{\bA^1}^{\conv k} \conv \Fl_{\bA^1}$ as the moduli space of collections
\[
\{ (c_1, \hdots, c_{k + 1}), \mathcal{P}_1, \hdots, \cP_{k + 1}, \alpha_1, \hdots,  \alpha_{k + 1}, \epsilon \},
\]
where $(c_1, \hdots, c_{k + 1}) \in \bA^{k + 1}$, $\cP_1, \hdots , \cP_{k + 1}$ are $G$-torsors on $\bA^1$, $\alpha_{1}$ is a trivialization 
\[
\cP_{1} \vert_{\bA^1 \setminus \{ c_{1} \}} \xrightarrow{\sim} \cP^{triv} \vert_{\bA^1 \setminus \{ c_{1} \}},
\]
$\alpha_i$ for $2 \leq i \leq k + 1$  is an isomorphism $\cP_i \vert_{\bA^1 \setminus \{ c_i \}} \xrightarrow{\sim} \cP_{i - 1} \vert_{\bA^1 \setminus \{ c_{i} \}}$, and $\epsilon$ is a $B$-reduction of $\cP_{k + 1}$ at the point $c_{k + 1}$.

This space coincides with the space denoted $\widehat{\bf{Gr}}_{\id}^{P, \emptyset}$ 
with $P = \{1, \hdots, k \}$ in \cite[Section~4.4]{AR1} (the only difference is, $c_{k + 1}$ is fixed to be 0 in \cite{AR1}, but ``allowing $c_{k + 1}$ to move'' does not change any argument about this space). We refer to this paper and references therein (mainly \cite{AR2}) for details on this (and more general) spaces.

$\Gr_{\bA^1}^{\conv k} \conv \Fl_{\bA^1}$ is an ind-scheme, equipped with the ind-projective map $\pi: \Gr_{\bA^1}^{\conv k} \conv \Fl_{\bA^1} \rightarrow \bA^{k + 1}$. The fibers of this map are:
\begin{equation} \label{fibers of global convolution diagram}
\begin{aligned}
\Gr_{\bA^1}^{\conv k} \conv \Fl_{\bA^1} \vert_{\bc} &= \Gr^{\times k} \times \Fl, \text{ if } \bc = (c_1, \hdots c_{k + 1}), c_i \neq c_j; \\
\Gr_{\bA^1}^{\conv k} \conv \Fl_{\bA^1} \vert_{0} &= \Gr^{\conv k} \conv \Fl, \
\end{aligned}
\end{equation}
and for general $\bc$, the fiber depends on which of coordinates of $\bc$ are equal.

\subsection{Beilinson--Drinfeld current group scheme} One of the natural group schemes, acting on the global convolution diagram, is the Beilinson--Drinfeld current group scheme. It is an ind-affine ind-group scheme $G_{\bA^{k + 1}}(\cO)$ over $\bA^{k + 1}$, whose fiber over the point $\bc = (c_1, \hdots, c_{k + 1})$ is isomorphic to the inverse limit
\begin{equation} \label{fibers of BD group}
G_{\bA^{k + 1}}(\cO) \vert_\bc = \lim\limits_{\infty \leftarrow i} G(\bC[t] / P(t)^i),
\end{equation}
where $P(t) = (t - c_1) \hdots (t - c_{k + 1})$. We refer to \cite[Section 3.1]{Zhu} for the full definition.

It is clear from \eqref{fibers of BD group} that  $G_{\bA^{k + 1}}(\cO)\vert_\bc$ is isomorphic to $G(\cO)^{\times m}$, where $m$ is the number of nonequal numbers among $c_1, \hdots, c_{k + 1}$.

\subsection{Schubert varieties in the global convolution diagrams}
We refer to \cite[Sections 4.4, 4.5]{HY} for treatment of the material of this subsection, where the case of $\Gr_{\bA^1}^{\conv k}$ was 
considered. All the statements we make for $\Gr_{\bA^1}^{\conv k} \conv \Fl_{\bA^1}$ are proved similarly to \cite{HY}.

The group scheme $G_{\bA^{k + 1}}(\cO)$ acts naturally on $\Gr_{\bA^1}^{\conv k} \conv \Fl_{\bA^1}$. The orbits of this action are parametrized by the tuples $(\la_1^\svee, \hdots, \la_k^\svee, w)$, where $\la_1^\svee, \hdots, \la_k^\svee$ are dominant coweights, and $w$ is an affine Weyl group element, having the form $\tau_{w_0\nu^\svee} v$ with $\nu^\svee \in P^{\vee +}, v \in W$ (so $\Fl^w$ is $G(\cO)$-invariant). We denote the corresponding orbit by $\Gr_{\bA^1}^{\la_1^\svee} \conv \hdots \conv \Gr_{\bA^1}^{\la_k^\svee} \conv \Fl_{\bA^1}^w$. 
The fibers of the closure $\ov \Gr_{\bA^1}^{\la_1^\svee} \conv \hdots \conv \ov \Gr_{\bA^1}^{\la_k^\svee} \conv \ov \Fl_{\bA^1}^w$ over $\bA^{k + 1}$ are easy to describe. Namely, one has
\begin{equation} \label{fibers of convolution schubert variety}
\begin{aligned}  
\ov \Gr_{\bA^1}^{\la_1^\svee} \conv \hdots \conv \ov \Gr_{\bA^1}^{\la_k^\svee} \conv \ov \Fl_{\bA^1}^w \vert_{\bc} &= \ov \Gr^{\la_1^\svee} \times \hdots \times \ov \Gr^{\la_k^\svee} \times \ov \Fl^w, \text{ if } \bc = (c_1, \hdots c_{k + 1}), c_i \neq c_j; \\
\ov \Gr_{\bA^1}^{\la_1^\svee} \conv \hdots \conv \ov \Gr_{\bA^1}^{\la_k^\svee} \conv \ov \Fl_{\bA^1}^w \vert_{0} &= \ov \Gr^{\la_1^\svee} \conv \hdots \conv \ov \Gr^{\la_k^\svee} \conv \ov \Fl^w, \
\end{aligned}
\end{equation}
and for general $\bc$, the fiber depends on which of coordinates of $\bc$ are equal.

\begin{prop} \label{flatness of global convolution}
	The structural morphism 
	\[
	\pi_{(\ula^\svee, w)}: \ov \Gr_{\bA^1}^{\la_1^\svee} \conv \hdots \conv \ov \Gr_{\bA^1}^{\la_k^\svee} \conv \ov \Fl_{\bA^1}^w \rightarrow \bA^{k + 1}
	\]
	is flat.
\end{prop}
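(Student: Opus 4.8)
The plan is to deduce flatness from the criterion of \emph{miracle flatness}: a finite-type morphism $f\colon X\to S$ with $S$ regular and $X$ Cohen--Macaulay is automatically flat provided every fibre has dimension $\dim X-\dim S$ (see e.g. EGA IV, 6.1.5, or Matsumura, Thm.~23.1). In our situation $S=\bA^{k+1}$ is smooth, and $X:=\ov\Gr_{\bA^1}^{\la_1^\svee}\conv\hdots\conv\ov\Gr_{\bA^1}^{\la_k^\svee}\conv\ov\Fl_{\bA^1}^w$ is the closure of a single orbit of the (geometrically irreducible) group scheme $G_{\bA^{k+1}}(\cO)$, hence irreducible; moreover $\pi_{(\ula^\svee,w)}$ is projective, being the restriction of the ind-projective map $\pi$ to a closed subscheme, and it is dominant since the orbit meets every fibre. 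It therefore suffices to verify \textbf{(A)} that $X$ is Cohen--Macaulay and \textbf{(B)} that all fibres are equidimensional of the expected dimension.

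For (B) I would read off the fibres from \eqref{fibers of convolution schubert variety}. Over the open locus where $c_1,\hdots,c_{k+1}$ are pairwise distinct the fibre is $\ov\Gr^{\la_1^\svee}\times\hdots\times\ov\Gr^{\la_k^\svee}\times\ov\Fl^w$, of dimension $\sum_{i}\langle 2\rho,\la_i^\svee\rangle+\dim\ov\Fl^w$ by the dimension formula for spherical Schubert varieties. Over any other point the fibre is the convolution of the factors whose parameters have collided. The crucial observation is that convolution preserves dimension: since $\ov\Gr^{\la}\conv\ov\Gr^{\mu}$ is a fibration over $\ov\Gr^{\la}$ with fibre $\ov\Gr^{\mu}$ (and the same holds with $\ov\Fl^w$ as the last factor), one has $\dim(\ov\Gr^{\la}\conv\ov\Gr^{\mu})=\langle 2\rho,\la\rangle+\langle 2\rho,\mu\rangle=\dim(\ov\Gr^{\la}\times\ov\Gr^{\mu})$. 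Hence collisions leave the total dimension unchanged, and every fibre has the same dimension $\sum_i\langle 2\rho,\la_i^\svee\rangle+\dim\ov\Fl^w$. As $X$ is irreducible and dominates $\bA^{k+1}$, its dimension equals this number plus $k+1$, so each fibre indeed has dimension $\dim X-(k+1)$.

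The real obstacle is (A), the Cohen--Macaulayness of the total space $X$; this is the global analogue of the classical Cohen--Macaulayness of (convolutions of) affine Schubert varieties. I would prove it exactly as in \cite{HY}, where the flagless case $\Gr_{\bA^1}^{\conv k}$ is handled: construct a global Bott--Samelson resolution of $X$ and show that $X$ is normal with rational singularities --- for example by producing, after reduction modulo $p$, a Frobenius splitting compatible with the relevant Schubert subschemes and lifting via the methods of \cite{AR1,AR2}. In characteristic zero rational singularities imply Cohen--Macaulayness, giving (A). The presence of the affine-flag factor introduces no new difficulty, since $\ov\Fl^w$ is $G(\cO)$-invariant by the hypothesis $w=\tau_{w_0\nu^\svee}v$, so the Bott--Samelson and splitting arguments of \cite{HY} apply verbatim once the flag factor is appended. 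Granting (A) and (B), miracle flatness applies and yields the flatness of $\pi_{(\ula^\svee,w)}$.
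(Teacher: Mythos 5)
Your strategy (miracle flatness over the regular base $\bA^{k+1}$) is genuinely different from the paper's, and as written it has a gap at its key step. Part (B) of your argument is fine: the fibres listed in \eqref{fibers of convolution schubert variety} are all equidimensional of dimension $\sum_i\langle 2\rho,\la_i^\svee\rangle+\dim\ov\Fl^w$, because convolution is an iterated locally trivial fibration and hence preserves dimension. The problem is part (A): the Cohen--Macaulayness of the total space $X=\ov\Gr_{\bA^1}^{\la_1^\svee}\conv\hdots\conv\ov\Gr_{\bA^1}^{\la_k^\svee}\conv\ov\Fl_{\bA^1}^w$ is exactly the hard content of the statement, and you do not prove it --- you defer it to ``the methods of \cite{HY}'' and a hypothetical global Bott--Samelson/Frobenius-splitting argument. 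That is not a routine citation: \cite[Lemma 4.17]{HY} establishes flatness of the flagless convolution $\ov\Gr_{\bA^1}^{\la_1^\svee}\conv\hdots\conv\ov\Gr_{\bA^1}^{\la_k^\svee}$ over $\bA^k$, not Cohen--Macaulayness of the total space of the variant with an affine-flag factor appended, and the singularity analysis you sketch (normality, rational singularities, compatible splittings after reduction mod $p$) would constitute a separate project. Until (A) is actually supplied, miracle flatness cannot be invoked.

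The paper's proof deliberately avoids any discussion of singularities of the total space. It realizes the space as a twisted product: using the $G_{\bA^1}(\cO)$-torsor $\bE$ over $\Gr_{\bA^1}^{\conv k}\times\bA^1$ one has
\[
\ov \Gr_{\bA^1}^{\la_1^\svee} \conv \hdots \conv \ov \Gr_{\bA^1}^{\la_k^\svee} \conv \ov \Fl_{\bA^1}^w \simeq \bE \vert_{\ov \Gr_{\bA^1}^{\la_1^\svee} \conv \hdots \conv \ov \Gr_{\bA^1}^{\la_k^\svee} \times \bA^1 } \times^{G_{\bA^1}(\cO)} \ov \Fl_{\bA^1}^w,
\]
so the map to $\bA^{k+1}$ factors as a fibration with fibre $\ov\Fl^w_{\bA^1}$ (flat over the last coordinate) over a base already known to be flat over $\bA^k$ by \cite[Lemma 4.17]{HY}; flatness of the composite follows from flatness of base and fibre. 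If you want to salvage your approach, the cleanest route to (A) would be to show that $X$ is, locally in the \'etale topology over $\bA^{k+1}$, a product of the individual Schubert varieties $\ov\Gr^{\la_i^\svee}$ and $\ov\Fl^w$ (each of which is Cohen--Macaulay); but establishing that local product structure is itself comparable in difficulty to the torsor argument above, which gives flatness directly.
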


\begin{proof}

	In \cite[Lemma 4.17]{HY} it is proved that $\ov \Gr_{\bA^1}^{\la_1^\svee} \conv \hdots \conv \ov \Gr_{\bA^1}^{\la_k^\svee}$ is flat over $\bA^{k}$. From this, the required for us flatness is deduced in the same inductive way, as in the proof of \cite[Lemma 4.17]{HY} itself. We sketch it for the reader's convenience.
	
	As in \cite[(3.1.22)]{Zhu}, we define the $G_{\bA^1}(\cO)$-torsor $\bE$ over $\Gr_{\bA^1}^{\conv k} \times \bA^1$ as the moduli space of collections
	\[
	(c_1, \hdots, c_k, c_{k + 1}, \cP_1, \hdots \cP_k, \al_1, \hdots, \al_k, \beta),
	\]
	where $(c_1, \hdots, c_k, \cP_1, \hdots \cP_k, \al_1, \hdots, \al_k)$ is a point of $\Gr^{\conv k}_{\bA^1}$, $c_{k + 1} \in \bA^1$, and $\beta$ is a trivialization $\cP_{k}\vert_{ \widehat{c}_{k + 1}} \xrightarrow{\sim} \cP^{triv} \vert_{\widehat{c}_{k + 1}}$ ($\widehat{c}_{k + 1}$ means the formal completion of $\bA^1$ at $c_{k + 1}$).
	
	Then the straightforward generalization of \cite[Proposition 4.15]{HY} tells that 
	\[
	\ov \Gr_{\bA^1}^{\la_1^\svee} \conv \hdots \conv \ov \Gr_{\bA^1}^{\la_k^\svee} \conv \ov \Fl_{\bA^1}^w \simeq \bE \vert_{\ov \Gr_{\bA^1}^{\la_1^\svee} \conv \hdots \conv \ov \Gr_{\bA^1}^{\la_k^\svee} \times \bA^1 } \times^{G_{\bA^1}(\cO)} \ov \Fl_{\bA^1}^w.
	\]
	Now the proposition follows from the flatness of $\ov \Gr_{\bA^1}^{\la_1^\svee} \conv \hdots \conv \ov \Gr_{\bA^1}^{\la_k^\svee}$ over $\bA^k$ and the flatness of $\ov \Fl_{\bA^1}^w$ over $\bA^1$, which is clear.
\end{proof}

\subsection{Line bundles on the global convolution diagrams}
Recall the Beilinson--Drinfeld Grassmannian $\Gr_{\bA^i}$, which is the moduli space of collections
\[
\{ (c_1, \hdots, c_i), \cP, \alpha \},
\]
where $(c_1, \hdots, c_i) \in \bA^i$, $\cP$ is a $G$-torsor on $\bA^1$, and $\alpha$ is a trivialization $\alpha: \cP \vert_{\bA^1 \setminus \{ c_1, \hdots, c_i \}} \xrightarrow{\sim} \cP^{triv}\vert_{\bA^1 \setminus \{ c_1, \hdots, c_i \}}$ (see, e.g. \cite[Section 3.1]{Zhu}). Recall also the ind-scheme, which we call the Beilinson--Drinfeld affine flag variety $\Fl_{\bA^{i}}$, that is, the moduli space
\[
\{ (c_1, \hdots, c_i), \cP, \alpha, \epsilon \},
\]
where $\{ (c_1, \hdots, c_i), \cP, \alpha\}$ is a point of $\Gr_{\bA^i}$, and $\epsilon$ is a $B$-reduction of $\cP$ at $c_i$ (it coincides with $\bf{Gr}_{\{1, \hdots, i - 1\}}$ of \cite[Section 4.3]{AR1} again with the same difference that $c_{i} = 0$ there, while we ``allow it to move'', which does not affect any proofs about these ind-schemes, including well-definedness).

Now, the Beilinson--Drinfeld Grassmannian $\Gr_{\bA^i}$ has the determinant line bundle, which we, abusing notation, denote by $\cL$ (see \cite[Section 3.1]{Zhu}). The Beilinson--Drinfeld flag variety $\Fl_{\bA^i}$ enjoys the projection to $\Gr_{\bA^i}$ by forgetting $\epsilon$ (this might be seen as the global analog of \eqref{fibration of flags over gr}), and we denote the pullback of the determinant line bundle under this projection by $\cL_{\La_0}$. Moreover, the choice of $\epsilon$ determines the tautological $B$-torsor over $\Fl_{\bA^i}$, and one can induce the line bundle, corresponding to any $B$-character $-\la$, which we denote as $\cL_\la$ (the minus sign here is standard, as, for example, when defining line bundles on $G/B$). Thus, there is a line bundle $\cL_\La$ on $\Fl_{\bA^i}$, corresponding to any affine weight $\La$.

Now, there are global analogs of the partial convolution morphisms \eqref{partial convolution morphisms} (see \cite[Section~4.5]{AR1} with the same minor difference as described above, and we refer to this paper and references therein for details). 
For $1 \leq i \leq k$ the morphism $\boldsymbol{\theta}_i: \Gr_{\bA^1}^{\conv k} \conv \Fl_{\bA^1} \rightarrow \Gr_{\bA^i}$ is defined by:
\[
\{ (c_1, \hdots, c_{k + 1}), \mathcal{P}_1, \hdots, \cP_{k + 1}, \alpha_1, \hdots,  \alpha_{k + 1}, \epsilon \} \mapsto \{(c_1, \hdots, c_i), \cP_i, \al_1 \circ \hdots \circ \al_i \}.
\]
The morphism $\boldsymbol{\theta}_{k + 1}: \Gr_{\bA^1}^{\conv k} \conv \Fl_{\bA^1} \rightarrow \Fl_{\bA^{k + 1}}$ is defined similarly:
\[
\{ (c_1, \hdots, c_{k + 1}), \mathcal{P}_1, \hdots, \cP_{k + 1}, \alpha_1, \hdots,  \alpha_{k + 1}, \epsilon \} \mapsto \{(c_1, \hdots, c_{k + 1}), \cP_{k + 1}, \al_1 \circ \hdots \circ \al_{k + 1}, \epsilon \}.
\] 

Using that, for $(\ell_1, \hdots, \ell_k, \La) \in \bZ^k \oplus \widehat{P}$ we define the line bundle on $\Gr_{\bA^1}^{\conv k} \conv \Fl_{\bA^1}$:
\[
\cL^{(\ell_1, \hdots, \ell_k, \La)} := (\boldsymbol{ \theta}_1, \hdots, \boldsymbol{\theta}_{k + 1})^* ( \cL^{\ell_1 - \ell_2} \boxtimes \hdots \boxtimes \cL^{\ell_k - \ell} \boxtimes \cL_\La),
\]
where $\ell$ is the level of $\La$.

Recall \eqref{fibers of global convolution diagram}. It is evident from definition that over the point $(c_1, \hdots, c_{k + 1})$ with $c_i \neq c_j$, the map $\boldsymbol{\theta}_{i}$ is the projection to the first $i$ factors, while over 0 it coincides with the morphism $\theta_i$. Thus, we have:

\begin{equation} \label{fibers of line bundle L}
\begin{aligned} 
\cL^{(\ell_1, \hdots, \ell_k, \La)} \vert_\bc &= \cL^{\ell_1} \boxtimes \hdots \boxtimes  \cL^{\ell_{k}} \boxtimes \cL^\La, \text{ if } \bc = (c_1, \hdots c_{k + 1}), c_i \neq c_j; \\
\cL^{(\ell_1, \hdots, \ell_k, \La)} \vert_{0} &= \cL^{(\ell_1, \hdots, \ell_k, \La)}. \
\end{aligned}
\end{equation}

\section{Main results} \label{section main result}


Let $M_i$ be $\g[t]$-modules as in Subsection \ref{FL fusion profuct}.
We prove the following key theorem, which tells how to compute $M_1 \ast \hdots \ast M_k$ if these modules appear in a certain geometric context.

\begin{thm} \label{geometric fusion}
	Let $f: X \rightarrow \bA^k$ be a proper morphism of schemes. Suppose the group scheme $G^{sc}_{\bA^k}(\cO) \rtimes \bC^\times$ acts on $X$, in such a way that $f$ is $\bC^\times$-equivariant (here $\bC^\times$ acts by the loop rotation on $G^{sc}_{\bA^k}(\cO)$, and by the standard dilations on $\bA^k$).
	
	Suppose $\cF$ is a $G^{sc}_{\bA^k}(\cO) \rtimes \bC^\times$-equivariant coherent sheaf on $X$, flat over $\bA^k$, and we have:
	\begin{enumerate}
		\item For any $\bc = (c_1, \hdots, c_k) \in \bA^k$ with pairwise distinct coordinates, $H^0(X\vert_\bc, \cF\vert_\bc)^*$ is the $G^{sc}_{\bA^k}(\cO)\vert_\bc = G^{sc}(\cO)^{\times k}$-module isomorphic to $M_1(c_1) \boxtimes \hdots \boxtimes M_k(c_k)$.
		\item The dual zero fiber $M = H^0(X\vert_0, \cF|_0)^*$ is a $\g[t]$-module with a unique up to scaling highest $\g$-weight vector, and this vector is cyclic.
		\item Higher cohomologies of all fibers vanish: $H^{>0}(X\vert_\bc, \cF\vert_\bc) = 0$ for all $\bc \in \bA^k$.
	\end{enumerate}
	
	Then the fusion product of modules $M_1, \hdots, M_k$ is independent of the choice of parameters and is isomorphic to the dual zero fiber of $H^0(X, \cF)$:
	\[
	M_1 \ast \hdots \ast M_k \simeq M.
	\]
	
	Moreover, it is associative, which means that for all $m$ one has:
	\[
	(M_1 \ast \hdots \ast M_m) \ast M_{m + 1} \ast \hdots \ast M_k \simeq M.
	\]
\end{thm}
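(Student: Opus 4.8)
The plan is to package the whole family of tensor products into a single equivariant vector bundle on $\bA^k$ and to recognize the degeneration to the central fibre as a Rees-module degeneration along the loop-rotation torus. First I would invoke the theorem on cohomology and base change: since $f$ is proper, $\cF$ is flat over $\bA^k$, and $H^{>0}$ vanishes on every fibre by (iii), the pushforward $f_*\cF$ is locally free with formation commuting with base change, so $\mathcal{E}:=(f_*\cF)^*$ is a $G^{sc}_{\bA^k}(\cO)\rtimes\bC^\times$-equivariant vector bundle on $\bA^k$ with $\mathcal{E}|_\bc\simeq H^0(X|_\bc,\cF|_\bc)^*$. The global regular currents $\g[t]=\g\T\bC[t]$ map into $\mathrm{Lie}\,G^{sc}_{\bA^k}(\cO)$, so $\g[t]$ acts fibrewise on $\mathcal{E}$, and by $\bC^\times$-equivariance this action is loop-rotation graded with $xt^m$ carrying weight $m$. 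By (i) the generic fibre is $M_1(c_1)\T\cdots\T M_k(c_k)$ and by (ii) the fibre over $0$ is the cyclic module $M$.

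Next I would produce a universal cyclic vector. The extremal $\g$-weight $\mu_1+\cdots+\mu_k$ occurs with multiplicity one in the generic fibre and, by (ii), in the fibre over $0$; flatness makes weight multiplicities locally constant, so the corresponding weight subsheaf of $\mathcal{E}$ is a line bundle on $\bA^k$, hence trivial, with a nowhere-vanishing section $\tilde m$ restricting to the highest-weight vector in each fibre. Loop rotation preserves this line, so $\tilde m$ is $\bC^\times$-homogeneous, normalised to weight $0$. Now fix a configuration $\bc_0$ with distinct coordinates and restrict to the $\bC^\times$-stable line $L=\bA^1\cdot\bc_0$. Then $R:=\mathcal{E}|_L$ is a graded free $\bC[s]$-module, i.e. the Rees module of the $\bC^\times$-filtration on its fibre $W=R/(s-1)R\simeq M_1(c_1)\T\cdots\T M_k(c_k)$, with central fibre $R/sR\simeq M$.

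The crux of this part is to identify the $\bC^\times$-filtration with the Feigin--Loktev one. Consider $R':=\U(\g[t]\T\bC[s])\cdot\tilde m\subseteq R$, which is $\bC^\times$-stable and free over $\bC[s]$. Every fibre of $R'$ is the cyclic submodule generated by $\tilde m$: for $s\ne 0$ this is the whole tensor-product fibre by \cite[Proposition 1.4]{FeLo}, and at $s=0$ it is all of $M$ by (ii); a rank count then gives $R'|_s\xrightarrow{\sim}\mathcal{E}|_s$ fibrewise, whence $R'=R$. Thus $R$ is generated by the weight-$0$ section $\tilde m$, and since the loop-rotation weight of a monomial $x_1t^{m_1}\cdots x_rt^{m_r}$ equals its total $t$-degree $\sum m_i$, the filtration $F^{\bC^\times}_{\le n}W=\mathrm{image}\,R_{\le n}$ coincides with $\U(\g[t])_{\le n}\,m$. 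Therefore $M\simeq R/sR=\gr_F W=M_1\ast\cdots\ast M_k$; as the central fibre $M=\mathcal{E}|_0$ is the same for every line, the fusion product is independent of the parameters.

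For associativity I would restrict the entire datum to the partial diagonal $\Delta_m=\{c_1=\cdots=c_m\}\simeq\bA^{k-m+1}$, which is stable under the global dilation $\bC^\times$; the restriction stays flat with vanishing higher cohomology and keeps central fibre $\mathcal{E}|_0=M$. To re-apply the part just proved over $\bA^{k-m+1}$, I must verify factorization (i) for this smaller base, namely that over a generic point $(a,c_{m+1},\dots,c_k)$ of $\Delta_m$ the fibre is $(M_1\ast\cdots\ast M_m)(a)\T M_{m+1}(c_{m+1})\T\cdots\T M_k(c_k)$. This \emph{partial} factorization is the main obstacle, since it does not follow formally from (i), which only concerns pairwise-distinct points. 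I would extract it from the factorization structure of the Beilinson--Drinfeld current group scheme $G^{sc}_{\bA^k}(\cO)$, which, near the locus where the first $m$ points cluster and separate from the rest, splits as one ``clustered'' factor times the factors at $c_{m+1},\dots,c_k$; combined with equivariance of $\cF$, this makes $H^0$ factor, and the clustered factor is exactly the central fibre of the collision family of the first $m$ points over $\bA^m$, computed by the main part to be $M_1\ast\cdots\ast M_m$. Granting this, the already-established isomorphism applied over $\Delta_m$ identifies its central fibre with $(M_1\ast\cdots\ast M_m)\ast M_{m+1}\ast\cdots\ast M_k$; but that central fibre is again the point $0\in\bA^k$ with fibre $M$, so both bracketings agree and associativity follows.
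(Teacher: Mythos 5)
Your first part is correct and is essentially the paper's own argument in different packaging. The paper likewise uses flatness plus the vanishing of higher cohomology of fibres to conclude that the global sections form a free $\cO(\bA^k)$-module, passes to the $\cO(\bA^k)$-dual $\Gamma$, produces a homogeneous cyclic generator (via graded Nakayama applied to a lift of the highest-weight vector of the zero fibre, rather than your multiplicity-one extremal-weight line bundle --- the two devices are interchangeable here), identifies the loop-rotation filtration on the fibre at a point with distinct coordinates with the Feigin--Loktev filtration, and concludes by the same rank count that the surjection from the zero fibre onto the associated graded is an isomorphism. Your Rees-module computation along the line $\bA^1\cdot\bc_0$ and the paper's explicit manipulation with a homogeneous relation $(u-Pu')v=0$ in $\Gamma$ are the same argument.

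The associativity part has a genuine gap, and it sits exactly where you flagged it: the partial factorization over $\Delta_m=\{c_1=\cdots=c_m\}$. The hypotheses constrain the fibres of $(X,\cF)$ only over points with pairwise distinct coordinates and over $0$; they say nothing about fibres over a generic point of $\Delta_m$. The factorization of $G^{sc}_{\bA^k}(\cO)$ at such a clustered point does give an action of $G^{sc}(\cO)^{\times(k-m+1)}$ on $H^0(X\vert_\bc,\cF\vert_\bc)^*$, but a module over a product of groups need not split as an external tensor product, and even if it did, the hypotheses supply no ``collision family of the first $m$ points over $\bA^m$'' whose central fibre you could identify with the clustered factor: such a family exists in the intended examples because $X$ itself carries a factorization structure, but that is additional data not assumed in the theorem. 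So your reduction to a re-application of the main part over $\Delta_m$ does not close. The paper sidesteps this entirely: associativity is quoted from \cite[Proposition A.1(b)]{DFF}, a purely algebraic statement whose only input is the freeness of the cyclic $U(\g[t])$--$\cO(\bA^k)$-bimodule $\Gamma$ (equivalently, of the sub-bimodule of $M_1[z_1]\T\cdots\T M_k[z_k]$ generated by the product of cyclic vectors), which your first part has already established. A self-contained fix should therefore stay on the algebraic side, restricting $\Gamma$ to $\Delta_m$ and comparing it by a surjection-plus-rank-count with the analogous global module built from $M_1\ast\cdots\ast M_m$ and $M_{m+1},\hdots,M_k$, rather than attempting to refactor the geometric fibres.
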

\begin{proof}
	Due to \cite[Lemma 4.1]{DFF} we have a natural inclusion of $G^{sc}(\cO)$ to the group of global sections of $G^{sc}_{\bA^k}(\cO)$ (viewed as a sheaf on $\bA^k$). Hence $H^0(X, \cF)$ admits a natural $G^{sc}(\cO)$-action. It also admits the $\cO_{\bA^k}(\bA^k)\simeq \bC[z_1, \hdots,z_k]$-action, and these two actions commute. So we can regard $H^0(X, \cF)$ as a $U(\g[t])$-$\cO(\bA^k)$-bimodule.
	
	Moreover, due to $\bC^\times$-equivariance of $f$, there is a nonnegative grading on $H^0(X, \cF)$, which is respected both by $t$-degree grading on $U(\g[t])$ and by the standard grading on $\bC[z_1, \hdots,z_k] \simeq \cO_{\bA^k}(\bA^k)$.
	
	Due to the Euler characteristic invariance in flat families and higher cohomology of fibers of $\cF$ vanishing, $H^0(X, \cF)$ is flat and hence free over $\cO(\bA^k)$. Let us consider the $\cO(\bA^k)$-dual $U(\g[t])$-$\cO(\bA^k)$-bimodule $\Gamma = H^0(X, \cF)^\vee$. 
	
	For any (closed) point $\bc \in \bA^k$, let us denote by $\bC_\bc$ the one dimensional $\cO(\bA^k)$-module, obtained as a quotient by the maximal ideal, corresponding to $\bc$.
	
	By the assumption (ii), $\Gamma \T_{\cO(\bA^k)} \bC_0$ is $U(\g[t])$-generated by a highest weight vector. Let $v$ be its homogeneous lift to $\Gamma$. The set $U(\g[t])v$ spans the zero fiber $\Gamma \T_{\cO(\bA^k)} \bC_0$. Hence, by the graded Nakayama lemma, $\Gamma$ is generated by $v$ as a $U(\g[t])$-$\cO(\bA^k)$-bimodule.

	It follows that for any $\bc \in \bA^k$ the module $\Gamma \T_{\cO(\bA^k)} \bC_\bc$ is $U(\g[t])$-cyclic, as $v \T 1$ is the cyclic vector. As in the definition of the fusion product, we obtain a filtration $F$ on $\Gamma \T_{\cO(\bA^k)} \bC_\bc$, induced by the $t$-grading on $U(\g[t])$.
	
	Let $\bc = (c_1, \hdots, c_k) \in \bA^k$ be a point with pairwise distinct coordinates $c_i$. We now claim that one has a surjection of cyclic $U(\g[t])$-modules:
	\begin{equation} \label{surjection from zero fiber to graded}
	\Gamma \T_{\cO(\bA^k)} \bC_0 \twoheadrightarrow \gr_F (\Gamma \T_{\cO(\bA^k)} \bC_\bc).
	\end{equation}
	
	We need to show that if $u \in U(\g[t])$ annihilates the cyclic vector of the left-hand side module, then it also annihilates the cyclic vector of the right-hand side module. 
	We may assume $u$ is homogeneous. The fact that it annihilates the cyclic vector $v \otimes 1 \in \Gamma \T_{\cO(\bA^k)} \bC_0$ means that there are $u' \in U(\g[t])$ and $P \in \cO(\bA^k)$ such that $P(0) = 0$ and
	\begin{equation} \label{vanishing condition}
	(u - Pu')v = 0
	\end{equation}
	in $\Gamma$. We may assume that $P, u'$ and the relation \eqref{vanishing condition} are homogeneous, and hence $\deg (u') < \deg (u)$ in $U(\g[t])$. \eqref{vanishing condition} implies also that one has $(u - P(\bc)u')(v \T 1) = 0$ in $\Gamma \T_{\cO(\bA^k)} \bC_\bc$, and because of the degrees inequality it follows that $u$ annihilates the cyclic vector in 
    \[
     \gr (\Gamma \T_{\cO(\bA^k)}\bC_\bc).
    \]
 Thus, \eqref{surjection from zero fiber to graded} is indeed a surjection.
	
	However, since $\Gamma$ is free over $\cO({\bA^k})$, modules in \eqref{surjection from zero fiber to graded} have equal dimensions, and thus it is in fact an isomorphism. Hence, using the assumption (i), we obtain
	\[
	M \simeq \Gamma \T_{\cO(\bA^k)} \bC_0 \simeq \gr_F (\Gamma \T_{\cO(\bA^k)} \bC_\bc) \simeq \gr_F (M_1(c_1) \T \hdots \T M_k(c_k)).
	\]
	Note that the filtration here coincides with the one in the definition of fusion product, as both are induced by the $t$-degree grading on $U(\g[t])$, applied to the unique up to scalar highest weight vector.
	
	Hence, the first part of the Theorem is proved.
	
	The second part about the associativity (in the assumption that $\Gamma$ is free, which is true in our case), is proved in \cite[Propositin A.1(b)]{DFF}.
\end{proof}

\begin{rem}
	In fact one can also prove that the $\cO(\bA^k)$-dualized global sections $H^0(X, \cF)^\vee$ are isomorphic to the $U(\g[t])$-$\cO({\bA^k})$-sub-bimodule of $M_1[z_1] \T \hdots \T M_k[z_k]$, generated by the tensor product of cyclic vectors.
	
	For $X$ being a Schubert variety in Beilinson--Drinfeld Grassmannian and $\cF$ being a power of the determinant line bundle, it is the main result of \cite{DFF}, which is a Borel--Weil-type theorem for the Beilinson--Drinfeld Grassmannian. The resulting modules are called global Demazure modules. We refer to this paper for details.

    For $X = \ov \Gr_{\bA^1}^{\la_1^\svee} \conv \hdots \conv \ov \Gr_{\bA^1}^{\la_k^\svee} \conv \ov \Fl_{\bA^1}^w$ and $\cF = \cL^{(\ell_1, \hdots, \ell_k, \La)}$, this result would be the global analog of Proposition \ref{sections of convolution diagram}. The resulting modules can be called the generalized global Demazure modules.
\end{rem}

\begin{cor} \label{fusion of general demazures}
	Let $\la_1^\svee, \hdots, \la_k^\svee$ be dominant coweights, $\mu$ be a dominant weight and $\ell_1 \geq \hdots \geq \ell_k \geq \ell$ be nonnegative integers. Then
	\begin{multline*}
	D(\ell_1, \ell_1 \la_1^\svee) \ast \hdots \ast D(\ell_k, \ell_k \la_k^\svee) \ast D(\ell, \mu) \simeq \\
	\bold D \Bigl( \tau_{w_0 \la_1^\svee} (\ell_1 - \ell_2)\La_0, \tau_{w_0(\la_1^\svee + \la_2^\svee)} (\ell_2 - \ell_3)\La_0, \hdots, \tau_{w_0(\la_1^\svee + \hdots + \la_k^\svee)}(\ell_k - \ell) \La_0, \ell \La_0 + \ell w_0(\la_1^\svee + \hdots + \la_k^\svee) + w_0\mu  \Bigr),
	\end{multline*}
	and the fusion product of these modules is associative.
\end{cor}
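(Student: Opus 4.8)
The plan is to apply Theorem \ref{geometric fusion} to the global convolution diagram, exploiting its factorization structure \eqref{fibers of convolution schubert variety}--\eqref{fibers of line bundle L}. First I would fix auxiliary data: choose $w = \tau_{w_0\nu^\svee}v \in \widehat{W}$ and $\La \in \widehat{P}^+$ of level $\ell$ with $w\La = \ell\La_0 + w_0\mu$, so that $\ov\Fl^w$ is $G(\cO)$-invariant and \eqref{cohomology of schubert variety} gives $H^0(\ov\Fl^w, \cL_\La)^* \simeq D(\ell,\mu)$; such data exist by the parametrization of $\g[t]$-invariant affine Demazure modules recalled in Subsection \ref{affine demazure modules}. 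I then set
\[
X = \ov \Gr_{\bA^1}^{\la_1^\svee} \conv \hdots \conv \ov \Gr_{\bA^1}^{\la_k^\svee} \conv \ov \Fl_{\bA^1}^w, \qquad \cF = \cL^{(\ell_1, \hdots, \ell_k, \La)},
\]
viewed over $\bA^{k+1}$, and verify the hypotheses of Theorem \ref{geometric fusion} (with its ``$k$'' taken to be our $k+1$).

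Next I would record the structural input. The map $\pi$ is proper on this orbit closure, the current group scheme $G^{sc}_{\bA^{k+1}}(\cO)$ together with loop rotation acts on $X$ compatibly with dilations on the base, and $\cF$ is an equivariant line bundle; its flatness over $\bA^{k+1}$ follows from Proposition \ref{flatness of global convolution} together with the local freeness of a line bundle over a flat total space. For condition (i), formulas \eqref{fibers of convolution schubert variety} and \eqref{fibers of line bundle L} show that over a point $\bc$ with pairwise distinct coordinates the pair $(X|_\bc, \cF|_\bc)$ is the external product of $\ov\Gr^{\la_1^\svee} \times \hdots \times \ov\Gr^{\la_k^\svee} \times \ov\Fl^w$ with $\cL^{\ell_1}\boxtimes \hdots \boxtimes \cL^{\ell_k}\boxtimes \cL_\La$, so the K\"unneth formula with \eqref{cohomology of spherical schubert variety} and \eqref{cohomology of schubert variety} yields
\[
H^0(X|_\bc, \cF|_\bc)^* \simeq D(\ell_1,\ell_1\la_1^\svee)\boxtimes\hdots\boxtimes D(\ell_k,\ell_k\la_k^\svee)\boxtimes D(\ell,\mu),
\]
while the identification $G^{sc}_{\bA^{k+1}}(\cO)|_\bc \cong G^{sc}(\cO)^{\times(k+1)}$ supplies exactly the twists $M_i(c_i)$ required by the factorization property. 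For condition (ii), \eqref{fibers of convolution schubert variety} and \eqref{fibers of line bundle L} identify $(X|_0, \cF|_0)$ with the local convolution pair, whose dual sections are, by Proposition \ref{sections of convolution diagram}, a generalized Demazure module; this is cyclic with a unique (up to scaling) highest $\g$-weight vector by its very definition in Subsection \ref{affine demazure modules}.

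The hypothesis that needs real care is condition (iii), the vanishing $H^{>0}(X|_\bc, \cF|_\bc) = 0$ for \emph{every} $\bc$, since Theorem \ref{geometric fusion} uses it to force $H^0(X,\cF)$ to be free over $\cO(\bA^{k+1})$. The generic and zero fibers are handled by the K\"unneth argument above and by Proposition \ref{sections of convolution diagram} respectively. The hard part will be the intermediate fibers, where some but not all coordinates coincide: here I would argue that such a fiber factors, according to the coincidence pattern of $\bc$, as an external product of convolution varieties of the same type as in Proposition \ref{sections of convolution diagram} (one pure-Grassmannian factor per cluster of coinciding points, with the flag absorbed into the cluster containing the last coordinate), on which $\cF|_\bc$ restricts to a box product of convolution line bundles. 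Since the sequence $\ell_1 \geq \hdots \geq \ell_k \geq \ell$ is weakly decreasing, each factor inherits a weakly decreasing sequence of levels, so Proposition \ref{sections of convolution diagram} applies factorwise and K\"unneth gives the vanishing. This bookkeeping over the non-generic strata is the main obstacle.

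With all hypotheses in place, Theorem \ref{geometric fusion} yields that the fusion product is independent of the parameters, is associative, and is isomorphic to the dual zero fiber $M$, which by Proposition \ref{sections of convolution diagram} is the generalized Demazure module whose last argument is $\tau_{w_0(\la_1^\svee+\hdots+\la_k^\svee)}\,w\La$. Finally I would match this with the asserted formula: using that a translation $\tau_{\eta^\svee}$ sends the classical part $\bar\La$ of a level-$\ell$ weight to $\bar\La + \ell\iota(\eta^\svee)$ (the $\delta$-component being irrelevant for the $\g[t]$-module structure), one computes that, modulo $\delta$, the classical part of $\tau_{w_0(\la_1^\svee+\hdots+\la_k^\svee)}\,w\La$ equals $w_0\mu + \ell w_0\iota(\la_1^\svee+\hdots+\la_k^\svee)$, so this last argument is precisely $\ell\La_0 + \ell w_0(\la_1^\svee+\hdots+\la_k^\svee) + w_0\mu$, as claimed.
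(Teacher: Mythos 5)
Your proposal is correct and follows the paper's strategy almost verbatim: same choice of $X$ and $\cF$, flatness via Proposition \ref{flatness of global convolution}, condition (i) from \eqref{fibers of convolution schubert variety}, \eqref{fibers of line bundle L}, \eqref{cohomology of spherical schubert variety}, \eqref{cohomology of schubert variety}, and condition (ii) from Proposition \ref{sections of convolution diagram}. The one place you diverge is the step you flag as ``the main obstacle'': the vanishing $H^{>0}(X\vert_\bc,\cF\vert_\bc)=0$ on the intermediate strata where some but not all coordinates coincide. The paper avoids your stratum-by-stratum bookkeeping entirely by invoking the semicontinuity theorem: having verified the vanishing at $\bc=0$ and at points with pairwise distinct coordinates, one notes that the locus $\{\bc : h^{i}(X\vert_\bc,\cF\vert_\bc)\geq 1\}$ is closed (semicontinuity in the proper flat family) and $\bC^\times$-stable (by the loop-rotation/dilation equivariance built into the setup of Theorem \ref{geometric fusion}), so if it were nonempty it would contain the origin, contradicting the vanishing at $0$. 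Your direct approach --- factoring each intermediate fiber according to the coincidence pattern of $\bc$ and applying Proposition \ref{sections of convolution diagram} plus K\"unneth factorwise --- should also work (any subsequence of the $\ell_i$ remains weakly decreasing, and the factorization of the fiber and of $\cL^{(\ell_1,\hdots,\ell_k,\La)}$ over clusters of coinciding points does hold), but it requires carefully identifying the restricted line bundle on each factor as one of the form \eqref{definition of line bindle L}; the semicontinuity argument buys you this for free. Your final weight computation identifying $\tau_{w_0(\la_1^\svee+\hdots+\la_k^\svee)}w\La$ with $\ell\La_0+\ell w_0(\la_1^\svee+\hdots+\la_k^\svee)+w_0\mu$ modulo $\delta$ is correct and is left implicit in the paper.
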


Note that the module on the right-hand side is more natural from the geometric point of view, as it is just the dual space of sections of a line bundle on the convolution of Schubert varieties due to Proposition \ref{sections of convolution diagram}.

An alternative way to describe it algebraically is as the cyclic submodule generated by the tensor product of cyclic vectors inside
\begin{multline*}
D(\ell_1 - \ell_2, (\ell_1 - \ell_2) \la_1^\svee)  \T D(\ell_2 - \ell_3, (\ell_2 - \ell_3)(\la_1^\svee + \la_2^\svee)) \T \hdots \\
\T  D(\ell_{k} - \ell_{k + 1}, (\ell_{k} - \ell_{k + 1})(\la_1^\svee + \hdots + \la_{k}^\svee)) \T D(\ell, \ell \iota(\la_1^\svee + \hdots + \la_k^\svee) + \mu).
\end{multline*}

\begin{proof}
	Let $D(\ell, \mu) \simeq \bold D(w\La)$, $w \in \widehat{W}, \La \in \widehat{P}^+$.
	
	We apply Theorem \ref{geometric fusion} to the variety $X = \ov \Gr_{\bA^1}^{\la_1^\svee} \conv \hdots \conv \ov \Gr_{\bA^1}^{\la_k^\svee} \conv \ov \Fl_{\bA^1}^w$ and its line bundle $\cF = \cL^{(\ell_1, \hdots, \ell_{k}, \La)}$. Let us check that all assumptions of the theorem are satisfied. 
	
	The flatness assumption is verified in Proposition \ref{flatness of global convolution}.
	
	The assumption (i) is satisfied, as due to \eqref{fibers of convolution schubert variety}, \eqref{fibers of line bundle L}, \eqref{cohomology of spherical schubert variety} and \eqref{cohomology of schubert variety}, for $\bc$ with pairwise distinct coordinates we have
	\begin{multline*}
	H^0(\ov \Gr_{\bA^1}^{\la_1^\svee} \conv \hdots \conv \ov \Gr_{\bA^1}^{\la_k^\svee} \conv \ov \Fl_{\bA^1}^w \vert_{\bc}, \cL^{(\ell_1, \hdots, \ell_{k}, \La)}\vert_\bc)^* \simeq \\ 
	H^0(\ov \Gr^{\la_1^\svee} \times \hdots \times \ov \Gr^{\la_k^\svee} \times \ov \Fl^w, \cL^{\ell_1} \boxtimes \hdots \boxtimes \cL^{\ell_k} \boxtimes \cL^{\La})^* \simeq \\
	D(\ell_1, \ell_1 \la_1^\svee) \boxtimes \hdots \boxtimes D(\ell_k, \ell_k \la_k^\svee) \boxtimes D(\ell, \mu).
	\end{multline*}
	
	The assumption (ii) is satisfied due to \eqref{fibers of convolution schubert variety}, \eqref{fibers of line bundle L} and Proposition \ref{sections of convolution diagram}.
	
	We check the assumption (iii) for $\bc = 0$ and for $\bc$ with pairwise distinct coordinates, and for the rest it follows by the semi-continuity theorem. For $\bc = 0$ it follows by \eqref{fibers of convolution schubert variety}, \eqref{fibers of line bundle L} and Proposition \ref{sections of convolution diagram}, and for $\bc$ with pairwise distinct coordinates it follows from \eqref{fibers of convolution schubert variety}, \eqref{fibers of line bundle L}, \eqref{cohomology of spherical schubert variety} and \eqref{cohomology of schubert variety}.
\end{proof}

\begin{rem} \label{fusion of quotient}
	One can also deduce the following result from Theorem \ref{geometric fusion}.
	Using same notation as in Corollary \ref{fusion of general demazures}, let, in addition, for some $1 \leq m \leq k$, $\la_m^{\svee \prime}$ be a dominant coweight such that $\la_m^{\svee \prime} \leq \la_m^{\svee}$ (hence $D(\ell_m, \ell_m \la_m^{\svee \prime}) \subset D(\ell_m, \ell_m \la_m^{\svee})$), and $\mu^\prime$ be a dominant weight such that naturally $D(\ell, \mu^\prime) \subset D(\ell, \mu)$. Then:
	\begin{multline} \label{fusion of quotient formula 1}
	D(\ell_1, \ell_1 \la_1^\svee) \ast \hdots \ast \Bigl( D(\ell_m, \ell_m \la_m^\svee) / D(\ell_m, \ell_m \la_m^{\svee \prime}) \Bigr) \ast  \hdots \ast D(\ell_k, \ell_k \la_k^\svee) \ast D(\ell, \mu) \\
	\left. \simeq \Bigl( D(\ell_1, \ell_1 \la_1^\svee) \ast \hdots \ast D(\ell_k, \ell_k \la_k^\svee) \ast D(\ell, \mu) \Bigr) \middle/ \right. \\
	\Bigl( D(\ell_1, \ell_1 \la_1^\svee) \ast \hdots \ast D(\ell_m, \ell_m \la_m^{\svee \prime})  \ast  \hdots \ast D(\ell_k, \ell_k \la_k^\svee) \ast D(\ell, \mu) \Bigr),
	\end{multline}
	and
	\begin{multline} \label{fusion of quotient formula 2}
	D(\ell_1, \ell_1 \la_1^\svee) \ast \hdots \ast D(\ell_k, \ell_k \la_k^\svee) \ast \Bigl( D(\ell, \mu) / D(\ell, \mu^\prime) \Bigr) \\
	\left. \simeq \Bigl( D(\ell_1, \ell_1 \la_1^\svee) \ast \hdots \ast D(\ell_k, \ell_k \la_k^\svee) \ast D(\ell, \mu) \Bigr) \middle/ \right.
	\Bigl( D(\ell_1, \ell_1 \la_1^\svee) \ast \hdots \ast D(\ell_k, \ell_k \la_k^\svee) \ast D(\ell, \mu') \Bigr)
	\end{multline}
	(one can show that the modules in the right-hand sides are contained one in the other so their quotients are well-defined).
	
	To get \eqref{fusion of quotient formula 1}, one should apply Theorem \ref{geometric fusion} to the scheme $X = \ov \Gr_{\bA^1}^{\la_1^\svee} \conv \hdots \conv \ov \Gr_{\bA^1}^{\la_k^\svee} \conv \ov \Fl_{\bA^1}^w$ and the sheaf $\cF = \cI_{\la_1^{\svee}, \hdots, \la_m^{\svee \prime}, \hdots, \la_k^{\svee}, w} \T  \cL^{(\ell_1, \hdots, \ell_{k}, \La)}$, where $\cI_{\la_1^{\svee}, \hdots, \la_m^{\svee \prime}, \hdots, \la_k^{\svee}, w}$ is the ideal sheaf of the closed subscheme $\ov \Gr_{\bA^1}^{\la_1^\svee} \conv \hdots \conv \Gr_{\bA^1}^{\la_k^\svee \prime} \conv \hdots \conv  \ov \Gr_{\bA^1}^{\la_k^\svee} \conv \ov \Fl_{\bA^1}^w$. The formula \eqref{fusion of quotient formula 2} is obtained similarly.
	
	It would be interesting to investigate if the Theorem \ref{geometric fusion} applies to the product of ideal sheaves of the form $\cI_{\la_1^{\svee}, \hdots, \la_m^{\svee \prime}, \hdots, \la_k^{\svee}, w}$ for different $m$'s, as that would allow to find the fusion product of several modules of the form $D(\ell_m, \ell_m \la_m^\svee) / D(\ell_m, \ell_m \la_m^{\svee \prime})$. In particular, the evaluation representation $V(\theta)$ has this form, as $V(\theta) \simeq D(1, \theta) / D(1, 0)$ (here $\theta$ is the maximal root).
\end{rem}

\begin{rem} \label{relation to other fusion modules}
	In fact, Theorem \ref{geometric fusion} explains, which geometric objects stand behind some known results about fusion product (which are particular cases of Corollary \ref{fusion of general demazures}). Namely, they follow from Theorem \ref{geometric fusion} applied to certain varieties and line bundles on them:
	\begin{enumerate}
		\item The isomorphism 
		\[
		D(\ell, \ell \la^\svee_1) \ast \hdots \ast D(\ell, \ell \la^\svee_k) \simeq D(\ell, \ell (\la^\svee_1 + \hdots + \la^\svee_k)),
		\]
		first discovered in \cite[Corollary 5]{FoLi2}, is a result of applying Theorem \ref{geometric fusion} to the closure of a $G_{\bA^k}(\cO)$-orbit in the Beilinson--Drinfeld Grassmannian, which is a degeneration of $\Gr^{\times k}$ to $\Gr$.
		
		\item The isomorphism 
		\[
		D(\ell, \ell \la^\svee_1) \ast \hdots \ast D(\ell, \ell \la^\svee_k) \ast V(\mu) \simeq D(\ell, \ell (\la^\svee_1 + \hdots + \la^\svee_k) + \mu)
		\]
		(with restrictions on $\mu$), first discovered in \cite[Theorem 1]{Ven}, is a result of applying Theorem \ref{geometric fusion} to the closure of a $G_{\bA^{k + 1}}(\cO)$-orbit in the iterated version of the ind-scheme, constructed in \cite{Gai}, which is a degeneration of $\Gr^{\times k} \times G/B$ to $\Fl$.
		
		\item The isomorphism 
		\[
		D(\ell, \ell \la^\svee_1) \ast \hdots \ast D(\ell, \ell \la^\svee_k) \ast D(\ell, \mu) \simeq D(\ell, \ell (\la^\svee_1 + \hdots + \la^\svee_k) + \mu),
		\] first discovered in \cite[Theorem 1]{CSVW}, \cite[Theorem 1]{VV} is a result of applying Theorem \ref{geometric fusion} to the closure of a $G_{\bA^{k + 1}}(\cO)$-orbit in the ind-scheme, similar to the previous case, which is a degeneration of $\Gr^{\times k} \times \Fl$ to $\Fl$.
		
		\item The isomorphism
		\begin{multline*}
		D(\ell_1, \ell_1 \la_1^\svee) \ast \hdots \ast D(\ell_k, \ell_k \la_k^\svee) \\
		\simeq \bold D \Bigl( \tau_{w_0 \la_1^\svee} (\ell_1 - \ell_2)\La_0, \tau_{w_0(\la_1^\svee + \la_2^\svee)} (\ell_2 - \ell_3)\La_0, \hdots, \tau_{w_0(\la_1^\svee + \hdots + \la_k^\svee)}\ell_k \La_0 \Bigr)
		\end{multline*}
		for $\ell_1 \geq \hdots \geq \ell_k$, which in simply-laced case coincides with the result of \cite[Theorem A]{Nao4}, is a result of applying Theorem \ref{geometric fusion} to the closure of a $G_{\bA^k}(\cO)$-orbit in the global convolution diagram of affine Grassmannians, which is a degeneration of $\Gr^{\times k}$ to $\Gr^{\conv k}$.
	\end{enumerate}
\end{rem}

\begin{rem} \label{fusion of evaluations}
	In the spirit of the previous remark, it would be interesting to explain geometrically and possibly extend to new cases some other results about fusion product. For instance, it is proved in \cite{KL} that 
	\[
	V(\ell \la) \ast V(\ell \la) \simeq D(\ell, 2 \ell \la) /( \g \T  t^2 \bC[t] )D(\ell, 2  \ell  \la) 
	\]
	for $\la \in \iota (P^{\vee +})$ (in fact, the result in \cite{KL} is more general, here we treat the simplest case).
	We expect that this isomorphism should have a geometric meaning, but we do not know how to derive it. 
	The natural analog for the global module $\Gamma$ from the proof of Theorem \ref{geometric fusion} would be the space of sections of the line bundle $\cL^\ell$ on the Beilinson--Drinfeld Schubert variety $\ov \Gr^{\lach,  \lach}$, which are constant along the orbits of the group $G_{\bA^2}(\cO)_1$, which is a degeneration of $G(\cO)_1^{\times 2}$ to $G(\cO)_2$ (here $G(\cO)_s$ stands for the kernel of the projection $G(\cO) \twoheadrightarrow G(\bC[[t]] / t^s)$). If one proves that this space is free as a module over $\cO_{\bA^2}(\bA^2)$, the result of \cite{KL} follows.
\end{rem}

\section{Coherent Satake category and quantum loop group} \label{section coherent satake and quantum loop group}

Perverse coherent sheaves were introduced in \cite{Bez}, \cite{AB}. The category of equivariant perverse coherent sheaves on the affine Grassmannian was studied in \cite{BFM}, \cite{CW}, \cite{FF}. In this section, we explain how our results are related to the convolution of certain objects in this category, and how it provides a bridge between two categories: perverse coherent sheaves on the affine Grassmannian and finite-dimensional modules over the quantum loop group, giving a conceptual explanation of the same cluster algebras appearance in \cite{CW} and \cite{DK2}.
Using this, we construct short exact sequences of perverse coherent sheaves for all simply-laced types, whose existence was conjectured in \cite{CW}.

\subsection{Coherent Satake category} Let us recall some notation of \cite{CW} (see this paper for definitions). Denote by $\cP_{coh}^{G^{sc}(\cO)} (\Gr)$ the category of $G^{sc}(\cO)$-equivariant perverse coherent sheaves on the affine Grassmannian. We do not encounter loop-rotation equivarience here for simplicity, though all statements we are going to prove have the analogs for the category $\cP_{coh}^{G^{^{sc}}(\cO) \rtimes \bC^\times} (\Gr)$, with proofs being essentially the same ($\bC^\times$-twist of sheaves corresponds to grading shift of their global sections, which we are going to consider).

Simple objects in this category are parametrized by pairs $(\la^\svee, \mu) \in P^\vee \times P$, such that $\mu$ is dominant for the Levi factor of the parabolic subgroup of $G$, corresponding to $\la^\svee$. Namely, there is a $G(\cO)$-equivariant vector bundle $\cV_\mu$ on $\Gr^{\la^\svee}$, and one defines $\cP_{\la^\svee, \mu} = IC(\Gr^{\la^\svee}, \cV_\mu[\frac{1}{2} \dim \Gr^{\la^\svee}])$.

The case of major importance is when $\cV_\mu$ is a power of the determinant line bundle $\cL^{\ell}$, since the corresponding perverse sheaves are real simple and they are expected to serve as cluster variables in the Grothendieck ring. In this case one has $\cP_{\lach, \ell} := IC(\Gr^{\la^\svee}, \cL^{ \ell} [ \langle \rho, \la^\svee \rangle]) = \cO_{\ov \Gr^{\la^\svee}} \T \cL^{ \ell}[\langle \rho, \la^\svee \rangle]$ (recall that $\dim \Gr^{\la^\svee} = \langle 2 \rho, \la^\svee \rangle$).
We also set $\cP_{i, \ell} :=  \cP_{\om_i^\svee, \ell}$ for $i \in I$.

Note that $H^{> 0}(\ov \Gr^{\lach}, \cL^{ \ell}) = 0$, hence $R\Gamma(\cP_{\lach, \ell})$ is a $\g[t]$-module concentrated in a single cohomological degree $\langle \rho, \lach \rangle$. We consider such modules as objects of the (abelian) category of $\g[t]$-modules. Then we have the following result, relating the convolution $\star$ in $\cP_{coh}^{G(\cO)} (\Gr)$ and Feigin--Loktev fusion product $\ast$ (it appeared for $\g \simeq \fsl_2$ and $\la_1^\svee = \la_2^\svee = \omega^\svee$ in \cite[Proposition 8.5]{BFM}):
\begin{prop} \label{sections of convolution of simple perverse sheaves}
	For any $\la_1^\svee, \la_2^\svee \in P^{\vee +}$, $\ell_1 \geq \ell_2 \geq 0$, $R\Gamma(\cP_{\la_1^\svee, \ell_1} \star \cP_{\la_2^\svee, \ell_2})$ is concentrated in a single cohomological degree $ \langle \rho, \la^\svee_1 + \la^\svee_2 \rangle$, and one has an isomorphism of $\g[t]$-modules
	\[\bigl(  R\Gamma (\cP_{\la_1^\svee, \ell_1})[- \langle \rho, \la^\svee_1  \rangle] \bigr)^* \ast \bigl( R\Gamma(\cP_{\la_2^\svee, \ell_2})[- \langle \rho,  \la^\svee_2 \rangle] \bigr)^* \simeq \\
\bigl( R\Gamma(\cP_{\la_1^\svee, \ell_1} \star \cP_{\la_2^\svee, \ell_2}) [- \langle \rho, \la^\svee_1 + \la^\svee_2 \rangle] \bigr)^*.\]
	A similar isomorphism holds for any number of modules.
\end{prop}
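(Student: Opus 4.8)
The plan is to reduce both sides of the asserted isomorphism to (duals of) spaces of sections of a power of the determinant line bundle on a convolution of spherical Schubert varieties, and then to apply Theorem~\ref{geometric fusion} in the pure-Grassmannian form of Remark~\ref{relation to other fusion modules}(iv) together with the flag-free analogue of Proposition~\ref{sections of convolution diagram}. First I would unwind the single-sheaf terms: since $\cP_{\la^\svee,\ell}=\cO_{\ov\Gr^{\la^\svee}}\T\cL^{\ell}[\langle\rho,\la^\svee\rangle]$, the projection formula gives $R\Gamma(\cP_{\la^\svee,\ell})=R\Gamma(\ov\Gr^{\la^\svee},\cL^{\ell})[\langle\rho,\la^\svee\rangle]$, which by \eqref{cohomology of spherical schubert variety} is concentrated in cohomological degree $\langle\rho,\la^\svee\rangle$ and satisfies $\bigl(R\Gamma(\cP_{\la^\svee,\ell})[-\langle\rho,\la^\svee\rangle]\bigr)^*\simeq D(\ell,\ell\la^\svee)$. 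Hence the left-hand side is exactly the fusion product $D(\ell_1,\ell_1\la_1^\svee)\ast D(\ell_2,\ell_2\la_2^\svee)$.

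For the right-hand side I would compute the convolution directly. By definition $\cP_{\la_1^\svee,\ell_1}\star\cP_{\la_2^\svee,\ell_2}=Rm_*(\cP_{\la_1^\svee,\ell_1}\widetilde\boxtimes\cP_{\la_2^\svee,\ell_2})$, where $m$ is the convolution morphism and $\widetilde\boxtimes$ the twisted external product on $\ov\Gr^{\la_1^\svee}\conv\ov\Gr^{\la_2^\svee}$. Because both factors are structure sheaves twisted by powers of $\cL$ and shifted into a single degree, their twisted product is again a single sheaf, of the form $\cO_{\ov\Gr^{\la_1^\svee}\conv\ov\Gr^{\la_2^\svee}}\T\cL^{(\ell_1,\ell_2)}[\langle\rho,\la_1^\svee+\la_2^\svee\rangle]$, where $\cL^{(\ell_1,\ell_2)}=\Theta^*(\cL^{\ell_1-\ell_2}\boxtimes\cL^{\ell_2})$ is the line bundle as in \eqref{definition of line bindle L} (the identification of the descended twisted product of $\cL^{\ell_1}$ and $\cL^{\ell_2}$ with this bundle is the multiplicativity of the determinant bundle under the factorization $\Theta$ of \eqref{convolution isomorphic to product}). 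Since $R\Gamma\circ Rm_*=R\Gamma$, I obtain
\[
R\Gamma(\cP_{\la_1^\svee,\ell_1}\star\cP_{\la_2^\svee,\ell_2})\simeq R\Gamma(\ov\Gr^{\la_1^\svee}\conv\ov\Gr^{\la_2^\svee},\cL^{(\ell_1,\ell_2)})[\langle\rho,\la_1^\svee+\la_2^\svee\rangle].
\]

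The flag-free analogue of Proposition~\ref{sections of convolution diagram}, proved by the same \cite{LLM}-based argument, shows that the higher cohomology of $\cL^{(\ell_1,\ell_2)}$ on the convolution vanishes; hence $R\Gamma(\cP_{\la_1^\svee,\ell_1}\star\cP_{\la_2^\svee,\ell_2})$ is concentrated in cohomological degree $\langle\rho,\la_1^\svee+\la_2^\svee\rangle$, and the right-hand side of the statement equals $H^0(\ov\Gr^{\la_1^\svee}\conv\ov\Gr^{\la_2^\svee},\cL^{(\ell_1,\ell_2)})^*$. Applying Theorem~\ref{geometric fusion} to the closure of a $G^{sc}_{\bA^2}(\cO)$-orbit in the global convolution diagram of two affine Grassmannians---the degeneration of $\ov\Gr^{\la_1^\svee}\times\ov\Gr^{\la_2^\svee}$ to $\ov\Gr^{\la_1^\svee}\conv\ov\Gr^{\la_2^\svee}$, as in Remark~\ref{relation to other fusion modules}(iv)---identifies this last space with $D(\ell_1,\ell_1\la_1^\svee)\ast D(\ell_2,\ell_2\la_2^\svee)$, closing the loop. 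The extension to any number of sheaves is identical, replacing the two-step convolution by the $k$-step one and invoking the general forms of the two cited results.

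The step I expect to be the main obstacle is the line-bundle identification above: verifying that the twisted external product of the $\cP_{\la_i^\svee,\ell_i}$ is the structure sheaf of the (reduced) convolution variety $\ov\Gr^{\la_1^\svee}\conv\ov\Gr^{\la_2^\svee}$ twisted by exactly $\cL^{(\ell_1,\ell_2)}$, with the correct cohomological shift. This rests on the multiplicativity of the determinant line bundle under convolution; once it is in place, the remaining steps are formal consequences of \eqref{cohomology of spherical schubert variety}, Proposition~\ref{sections of convolution diagram} and Theorem~\ref{geometric fusion}.
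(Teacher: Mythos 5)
Your proposal is correct and follows essentially the same route as the paper: unwind $\cP_{\la^\svee,\ell}$ via \eqref{cohomology of spherical schubert variety}, rewrite the convolution as $R\theta_*$ of $\cL^{(\ell_1,\ell_2)}$ on $\ov\Gr^{\la_1^\svee}\conv\ov\Gr^{\la_2^\svee}$ (the paper takes this identification directly from the definition of $\star$ in \cite{BFM}, \cite{CW}, which is exactly the determinant-bundle multiplicativity you flag), and then conclude by the cohomology vanishing of Proposition \ref{sections of convolution diagram} together with Theorem \ref{geometric fusion}/Corollary \ref{fusion of general demazures}. The only cosmetic difference is that you invoke the pure-Grassmannian form of the global degeneration (Remark \ref{relation to other fusion modules}(iv)) where the paper cites Corollary \ref{fusion of general demazures}; these are interchangeable here.
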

\begin{proof}
	By the definition of operation $\star$ (see \cite{BFM}, \cite{CW}), 
	\[
	\cP_{\la_1^\svee, \ell_1} \star \cP_{\la_2^\svee, \ell_2} =  R\theta_* (\cL^{(\ell_1, \ell_2)}[\langle \rho, \la^\svee_1 + \la^\svee_2 \rangle]),
	\]
	where $\theta: \ov \Gr^{\la_1^\svee} \conv \ov \Gr^{\la_2^\svee} \rightarrow \ov \Gr^{\la_1^\svee + \la_2^\svee}$ is the convolution morphism. Now the Proposition follows from Proposition \ref{sections of convolution diagram} and Corollary \ref{fusion of general demazures}. 
	
	The case of arbitrary number of modules is proved similarly.
\end{proof}

\subsection{Quantum loop group}

From now on we assume that $\g$ is of simply-laced type. Consider the loop algebra $\bm L \g = \g[t, t^{-1}]$, and the corresponding quantum loop group $U_q(\bm L \g)$. Denote by $\Rep \ U_q(\bm L \g)$ the category of its finite-dimensional representations. 
We denote Kirillov--Reshetikhin modules by $KR_{i, \ell}$ (here $i \in I, \ell \in \bZ_{\ge 0}$; we omit an extra spectral parameter in $\bC^{\times}$, as it will not matter in what follows).
Their tensor products fit into short exact sequences, called Q-systems, see \cite[Theorem 6.1]{Her}. In \cite{Ked, DK2} the cluster meaning of Q-systems was noticed. In the cluster algebra described there, the role of cluster variables in the initial cluster seed is played by certain Kirillov--Reshetikhin modules (up to a sign). 

For certain modules $M \in \Rep \ U_q(\bm L \g)$ their \textit{graded limit} $M_{loc}$ is defined, which is a finite-dimensional graded $\g[t]$-module; we refer to \cite{BCKV} for a nice survey on this. \cite[Theorem A]{Nao4} claims that the following holds:
\begin{prop} \label{graded limit of product of KR}
	For any $i_1, i_2 \in I$, $\ell_1, \ell_2 \geq 0$, there is an isomorphism of $\g[t]$-modules:
	\[
	(KR_{i_1, \ell_1})_{loc} \ast (KR_{i_2, \ell_2})_{loc} \simeq (KR_{i_1, \ell_1} \T KR_{i_2, \ell_2})_{loc},
	\]
	and similar isomorphism holds for any number of modules.
\end{prop}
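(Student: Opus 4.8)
The plan is to prove the isomorphism by producing a surjection of $\g[t]$-modules and then forcing it to be an isomorphism by a dimension count, using that both the fusion product and the graded limit preserve total dimension. Recall that the graded limit is built in two steps: one first specializes the quantum parameter $q \to 1$ to obtain a cyclic $\g[t]$-module (the classical limit $\overline{M}$), and then passes to the associated graded module for the filtration induced by the $t$-degree, which supplies the grading. In particular $\dim M_{loc} = \dim M$ for every module $M$ admitting a graded limit, and the starting isomorphism recalled in the introduction gives $(KR_{i,\ell})_{loc} \simeq D(\ell, \ell \om_i^\svee)$.

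First I would record the dimension count. On the left-hand side the Feigin--Loktev fusion product is the associated graded of a tensor product (see Subsection \ref{FL fusion profuct}), so its total dimension is $\dim (KR_{i_1,\ell_1})_{loc} \cdot \dim (KR_{i_2,\ell_2})_{loc} = \dim KR_{i_1,\ell_1} \cdot \dim KR_{i_2,\ell_2}$. On the right-hand side the graded limit preserves dimension, whence $\dim (KR_{i_1,\ell_1} \T KR_{i_2,\ell_2})_{loc} = \dim(KR_{i_1,\ell_1} \T KR_{i_2,\ell_2})$, which is the same product. Thus it suffices to construct a surjection between the two sides.

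Next I would produce the surjection. The key input is the cyclicity of ordered tensor products of Kirillov--Reshetikhin modules: for a suitable ordering and a generic choice of spectral parameters $a_1, a_2$, the module $KR_{i_1,\ell_1}(a_1) \T KR_{i_2,\ell_2}(a_2)$ is a cyclic $U_q(\bm L \g)$-module generated by the tensor product of the highest weight vectors (this is part of the theory of $\ell$-highest weight modules). Cyclicity is preserved by the classical limit, and I would identify the classical limit of this tensor product with the tensor product of the classical limits, $\overline{KR_{i_1,\ell_1}(a_1) \T KR_{i_2,\ell_2}(a_2)} \simeq \overline{KR_{i_1,\ell_1}}(c_1) \T \overline{KR_{i_2,\ell_2}}(c_2)$, the spectral parameters $a_i$ becoming, after specialization, the fusion parameters $c_i$ of Subsection \ref{FL fusion profuct}. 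Taking associated graded modules for the $t$-degree filtration then yields a surjection $(KR_{i_1,\ell_1})_{loc} \ast (KR_{i_2,\ell_2})_{loc} \twoheadrightarrow (KR_{i_1,\ell_1} \T KR_{i_2,\ell_2})_{loc}$, since the generator of the graded limit satisfies at least the relations cutting out the fusion product. Combined with the equality of dimensions from the previous step, this surjection is an isomorphism. The statement for an arbitrary number of factors follows by the same argument together with the associativity established in Theorem \ref{geometric fusion}, or by induction on the number of tensor factors.

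The main obstacle is the compatibility of the classical limit with the tensor product, i.e. the identification $\overline{KR_{i_1,\ell_1}(a_1) \T KR_{i_2,\ell_2}(a_2)} \simeq \overline{KR_{i_1,\ell_1}}(c_1) \T \overline{KR_{i_2,\ell_2}}(c_2)$ of $\g[t]$-modules, together with the claim that the spectral parameters specialize \emph{exactly} to fusion parameters. This requires controlling how the normalization of the $R$-matrix and the limit $q \to 1$ interact with the coproduct, and it is precisely the content secured in \cite[Theorem A]{Nao4}; granting it, the dimension count and the cyclicity argument above are then formal.
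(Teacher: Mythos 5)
The paper offers no independent argument for this statement: Proposition \ref{graded limit of product of KR} is presented as a direct quotation of \cite[Theorem A]{Nao4}, which is exactly the reference your sketch ultimately invokes for the crucial compatibility of the classical limit with tensor products and the matching of spectral parameters with fusion parameters. Your dimension count and surjection scaffolding are therefore subsumed by the theorem you cite, so the two approaches coincide in substance.
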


\subsection{Relation between the two via fusion product}
Combining the above results, we get the following
\begin{prop} \label{relation between perverse coherent and quantum group}
For any $i_1, \hdots, i_k \in I$, $\ell_1 \geq \hdots \geq \ell_k \geq 0$, there is an isomorphism of $\g[t]$-modules:
\[
(KR_{i_1, \ell_1} \T \hdots \T KR_{i_k, \ell_k})_{loc} \simeq \bigl( R\Gamma (\cP_{i_1, \ell_1} \star \hdots \star \cP_{i_k, \ell_k})[ - \frac{1}{2} \dim \Gr^{\om_{i_1}^\svee + \hdots + \om_{i_k}^\svee }] \bigr)^*.
\]
\end{prop}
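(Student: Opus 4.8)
The plan is to obtain this isomorphism purely by concatenating the single-object identification recalled in the Introduction with Propositions \ref{graded limit of product of KR} and \ref{sections of convolution of simple perverse sheaves}; no new geometric input is needed, so the whole argument amounts to chaining established isomorphisms and tracking cohomological shifts.

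First I would pin down the base case $k=1$. Since $\cP_{i, \ell} = \cO_{\ov \Gr^{\om_i^\svee}} \T \cL^{\ell}[\bra \rho, \om_i^\svee \ket]$ and $\bra \rho, \om_i^\svee \ket = \frac{1}{2} \dim \Gr^{\om_i^\svee}$, formula \eqref{cohomology of spherical schubert variety} shows that $R\Gamma(\cP_{i, \ell})$ is concentrated in a single cohomological degree and that
\[
\bigl( R\Gamma(\cP_{i, \ell})[-\tfrac{1}{2} \dim \Gr^{\om_i^\svee}] \bigr)^* \simeq H^0(\ov \Gr^{\om_i^\svee}, \cL^{\ell})^* \simeq D(\ell, \ell \om_i^\svee).
\]
Combined with the isomorphism $(KR_{i, \ell})_{loc} \simeq D(\ell, \ell \om_i^\svee)$ recalled in the Introduction, this is exactly the claimed statement for one factor.

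Then for general $k$ I would run the chain
\begin{align*}
(KR_{i_1, \ell_1} \T \hdots \T KR_{i_k, \ell_k})_{loc}
&\simeq (KR_{i_1, \ell_1})_{loc} \ast \hdots \ast (KR_{i_k, \ell_k})_{loc} \\
&\simeq \bigl( R\Gamma(\cP_{i_1, \ell_1})[-\bra \rho, \om_{i_1}^\svee \ket] \bigr)^* \ast \hdots \ast \bigl( R\Gamma(\cP_{i_k, \ell_k})[-\bra \rho, \om_{i_k}^\svee \ket] \bigr)^* \\
&\simeq \bigl( R\Gamma(\cP_{i_1, \ell_1} \star \hdots \star \cP_{i_k, \ell_k})[-\bra \rho, \om_{i_1}^\svee + \hdots + \om_{i_k}^\svee \ket] \bigr)^*,
\end{align*}
invoking Proposition \ref{graded limit of product of KR} for the first step, the base case applied to each factor for the second, and Proposition \ref{sections of convolution of simple perverse sheaves} with $\la_j^\svee = \om_{i_j}^\svee$ for the third. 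Finally $\dim \Gr^{\la^\svee} = \bra 2\rho, \la^\svee \ket$ rewrites the total shift as $-\frac{1}{2} \dim \Gr^{\om_{i_1}^\svee + \hdots + \om_{i_k}^\svee}$, giving the asserted formula.

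The delicate part here is entirely bookkeeping rather than substance. The ordering hypothesis $\ell_1 \geq \hdots \geq \ell_k$ is precisely what allows both Corollary \ref{fusion of general demazures} (so that the iterated fusion product on the second line is independent of parameters and associative, and the displayed line is unambiguous) and Proposition \ref{sections of convolution of simple perverse sheaves} to apply to this order of factors. I would check most carefully that at every stage the derived global sections remain concentrated in a single cohomological degree---by \eqref{cohomology of spherical schubert variety} for the individual factors and by Proposition \ref{sections of convolution of simple perverse sheaves} for the convolution---so that the shifts and $\bC$-linear duals of complexes reduce to shifts and duals of honest $\g[t]$-modules and no higher $R^i\Gamma$ contributions intervene. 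This presents no genuine obstacle, but it is the point where a sign or shift error would be easiest to make.
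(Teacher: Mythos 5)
Your argument is correct and is essentially identical to the paper's proof: both reduce to the case $k=1$ via Propositions \ref{graded limit of product of KR} and \ref{sections of convolution of simple perverse sheaves}, and settle $k=1$ by combining $(KR_{i,\ell})_{loc} \simeq D(\ell,\ell\om_i^\svee)$ with \eqref{cohomology of spherical schubert variety}. Your extra bookkeeping on cohomological concentration and the identity $\langle \rho, \sum_j \om_{i_j}^\svee\rangle = \tfrac{1}{2}\dim \Gr^{\sum_j \om_{i_j}^\svee}$ is just a more explicit rendering of what the paper leaves implicit.
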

\begin{proof}
Using Propositions \ref{sections of convolution of simple perverse sheaves} and \ref{graded limit of product of KR}, it is sufficient to prove this Proposition for $k = 1$. The case $k = 1$ follows from
\[
(KR_{i, \ell})_{loc} \simeq D(\ell, \ell \om_i^\svee) \simeq 
\bigl( R\Gamma(\cP_{i, \ell}) [-\frac{1}{2} \dim \Gr^{\om_i^\svee}] \bigr)^*,
\]
where the first isomorphism is \cite[Theorem 4]{FoLi2}, and the second isomorphism is \eqref{cohomology of spherical schubert variety}.
\end{proof}

We do not know if this proposition is a shadow of existence of some monoidal functor between subcategories of $\cP_{coh}^{G^{sc}(\cO)} (\Gr)$ and of $\Rep \ U_q(\bold L \g)$.

\subsection{Cluster relations in coherent Satake category} 
We now give an application of this connection. In \cite[Proposition 2.17]{CW}, the authors prove that certain perverse coherent sheaves fit into short exact sequences. These correspond to cluster relations in the Grothendieck ring. Their proof works for $G = GL_n$ only, as it uses the lattice realization of affine Grassmannian, though a similar result is expected for any type, see \cite[Conjecture 1.10]{CW}. We provide a proof in all simply-laced types. Thus, it can be seen as a first step towards the proof of \cite[Conjecture 1.10]{CW}.

First, we need the following
\begin{lem} \label{perverse sitting in a single cohomological degree}
	For any dominant coweights $\la^\svee_1, \ldots, \la^\svee_k$ and integers $\ell_1 \geq \hdots \geq \ell_k$ the perverse coherent sheaf $\cP_{\la_1^\svee, \ell_1} \star \hdots \star \cP_{\la^\svee_{k}, \ell_k}$ on $\Gr$ is concentrated in a single cohomological degree $\langle \rho, \la^\svee_1 + \hdots + \la^\svee_k \rangle$ (with respect to the standard t-structure).
\end{lem}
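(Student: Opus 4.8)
The plan is to realize the convolution as a derived pushforward of a line bundle and to reduce the lemma to the vanishing of the higher direct images of that bundle along the convolution morphism. Write $Y = \ov\Gr^{\la_1^\svee}\conv\hdots\conv\ov\Gr^{\la_k^\svee}$ and $Z = \ov\Gr^{\la_1^\svee+\hdots+\la_k^\svee}$, and let $\theta\colon Y\to Z$ be the (total) convolution morphism; note it is the last component $\theta_k$ in the notation of \eqref{convolution isomorphic to product}, so that $\theta^*\cL = \cL^{(1,\hdots,1)}$. Exactly as in the proof of Proposition \ref{sections of convolution of simple perverse sheaves}, but for $k$ factors, the definition of $\star$ gives
\[
\cP_{\la_1^\svee,\ell_1}\star\hdots\star\cP_{\la_k^\svee,\ell_k}\simeq R\theta_*\bigl(\cL^{(\ell_1,\hdots,\ell_k)}\bigr)[\langle\rho,\la_1^\svee+\hdots+\la_k^\svee\rangle].
\]
Since $\langle\rho,\la_1^\svee+\hdots+\la_k^\svee\rangle=\tfrac12\dim\Gr^{\la_1^\svee+\hdots+\la_k^\svee}$, it suffices to prove $R^i\theta_*\cL^{(\ell_1,\hdots,\ell_k)}=0$ for all $i>0$: then $R\theta_*\cL^{(\ell_1,\hdots,\ell_k)}$ is the single sheaf $\theta_*\cL^{(\ell_1,\hdots,\ell_k)}$ in degree $0$, and the shift concentrates $\cP_{\la_1^\svee,\ell_1}\star\hdots\star\cP_{\la_k^\svee,\ell_k}$ in the single cohomological degree $\langle\rho,\la_1^\svee+\hdots+\la_k^\svee\rangle$, in accordance with the single-factor normalization and with Proposition \ref{sections of convolution of simple perverse sheaves}.

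To prove the vanishing I would run a Serre-vanishing bootstrap from the global statement already available. By the projection formula, for every $N\ge 0$ one has
\[
R\theta_*\bigl(\cL^{(\ell_1,\hdots,\ell_k)}\otimes\theta^*\cL^N\bigr)\simeq\bigl(R\theta_*\cL^{(\ell_1,\hdots,\ell_k)}\bigr)\otimes\cL^N,
\]
and $\cL^{(\ell_1,\hdots,\ell_k)}\otimes\theta^*\cL^N=\cL^{(\ell_1+N,\hdots,\ell_k+N)}$, whose exponents still satisfy $\ell_1+N\ge\hdots\ge\ell_k+N\ge 0$. Hence the pure-Grassmannian analog of the higher-cohomology vanishing in Proposition \ref{sections of convolution diagram} gives $H^{>0}(Y,\cL^{(\ell_1+N,\hdots,\ell_k+N)})=0$ for all $N\ge 0$.

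Now suppose, for contradiction, that $m>0$ is maximal with $R^m\theta_*\cL^{(\ell_1,\hdots,\ell_k)}\neq 0$. As $\cL$ is ample on the projective variety $Z$, Serre vanishing gives, for $N\gg 0$, both $H^{>0}(Z,R^q\theta_*\cL^{(\ell_1,\hdots,\ell_k)}\otimes\cL^N)=0$ for all $q$ and the global generation (hence existence of a nonzero section) of $R^m\theta_*\cL^{(\ell_1,\hdots,\ell_k)}\otimes\cL^N$. The Leray spectral sequence for $\theta$ then collapses to its $p=0$ column and yields
\[
H^m\bigl(Y,\cL^{(\ell_1+N,\hdots,\ell_k+N)}\bigr)\simeq H^0\bigl(Z,R^m\theta_*\cL^{(\ell_1,\hdots,\ell_k)}\otimes\cL^N\bigr)\neq 0,
\]
contradicting the vanishing of the previous paragraph. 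Therefore $R^{>0}\theta_*\cL^{(\ell_1,\hdots,\ell_k)}=0$, which finishes the argument.

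The main obstacle is the input $H^{>0}(Y,\cL^{(\ell_1+N,\hdots,\ell_k+N)})=0$ for the convolution of spherical Schubert varieties \emph{without} a flag factor; I would obtain it from the Bott--Samelson/Frobenius-splitting argument of \cite{LLM} used in Proposition \ref{sections of convolution diagram} (Kempf's lemma \cite{Kem} together with normality of $Y$), either by repeating that argument with no last flag factor or by descending along the fibration \eqref{fibration of flags over gr}. The only other point needing care is the precise $k$-fold convolution formula for $\star$ and the identification $\theta^*\cL=\cL^{(1,\hdots,1)}$, both of which are bookkeeping with the twisted external products and the isomorphism of \eqref{convolution isomorphic to product}.
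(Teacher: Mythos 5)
Your proposal is correct and follows essentially the same route as the paper: the paper also reduces, via Serre vanishing, to showing that $R\Gamma\bigl((\cP_{\la_1^\svee,\ell_1}\star\hdots\star\cP_{\la_k^\svee,\ell_k})\otimes\cL^N\bigr)$ sits in the single degree $\langle\rho,\la_1^\svee+\hdots+\la_k^\svee\rangle$ for $N\gg 0$, and then applies the projection formula and the higher-cohomology vanishing of Proposition \ref{sections of convolution diagram}; you merely make the Leray/Serre bootstrap explicit where the paper leaves it as a one-line appeal to Serre vanishing. The only nitpick is that your claim $\ell_k+N\ge 0$ "for every $N\ge 0$" should be "for $N\ge -\ell_k$" since the $\ell_i$ may be negative, but this is harmless as only $N\gg 0$ is used.
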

\begin{proof}
	By Serre vanishing, it is sufficient to check that for all $N \gg 0$ the complex 
 \[R\Gamma ((\cP_{\la_1^\svee, \ell_1} \star \hdots \star \cP_{\la^\svee_{k}, \ell_k}) \T \cL^{N})\]
 is concentrated in this cohomological degree. Let $\theta: \ov \Gr^{\la^\svee_1} \conv \hdots \conv \ov \Gr^{\la^\svee_k} \rightarrow \ov \Gr^{\la^\svee_1 + \hdots + \la^\svee_k}$ be the convolution morphism. Then we have
	\begin{multline} \label{derived sections of perverse tensored by determinant}
	R \Gamma ((\cP_{\la_1^\svee, \ell_1} \star \hdots \star \cP_{\la^\svee_{k}, \ell_k}) \T \cL^N ) \stackrel{(i)}{\simeq} R \Gamma ( (R \theta_* \cL^{(\ell_1, \hdots, \ell_k)}[\frac{1}{2}\dim \Gr^{\la^\svee_1 + \hdots + \la^\svee_k}]) \T \cL^{N}) \stackrel{(ii)}{\simeq} \\ 
	R\Gamma (R \theta_* ( \cL^{(\ell_1, \hdots, \ell_k)}[\frac{1}{2}\dim \Gr^{\la^\svee_1 + \hdots + \la^\svee_k}] \T \theta^* \cL^{N}) ) \simeq R\Gamma (\cL^{(\ell_1 + N, \hdots, \ell_k + N)}[\frac{1}{2}\dim \Gr^{\la^\svee_1 + \hdots + \la^\svee_k}]),
	\end{multline}
	and the last complex is concentrated in the cohomological degree $\langle \rho, \la^\svee_1 + \hdots + \la^\svee_k \rangle$ for any $N > - \ell_k$ by Proposition \ref{sections of convolution diagram}.  Here the isomorphism $(i)$ holds by definition of the operation $\star$, and the isomorphism $(ii)$ is the projection formula.
\end{proof}

Before we state our next theorem, we make the following observation, which will be crutial for its proof.

Recall the standard Serre equivalence between the category of coherent sheaves on a projective variety $Z$ and the quotient of the category of finitely generated graded modules over the homogeneous coordinate ring of $Z$ by the subcategory of finite-dimensional modules. Namely, if $L$ is a very ample line bundle on $Z$, the functor $\cF \mapsto \bigoplus_{n > 0} \Gamma(\cF \T L^n)$ induces the equivalence between $\coh(Z)$ and $\bigoplus_{n \geq 0} \Gamma( L^n )$-$\grmod_{f.g.} \left/ \bigoplus_{n \geq 0} \Gamma( L^n ) \right.$-$\grmod_{f.d.}$.

Now, assume an affine group scheme $H$ acts on $Z$ and $L$ is an equivariant very ample line bundle. This induces the action of $H$ on the ring $\bigoplus_{n \geq 0} \Gamma( L^n)$ by graded automorphisms (and conversely, such an action on this ring determines the action and an equivariant very ample line bundle on its $\Proj$). Given a coherent sheaf $\cF$ on $Z$, the data of equivariant structure on it, under the Serre equivalence, induces the structure of $H$-equivariant graded $\bigoplus_{n \geq 0} \Gamma( L^n)$-module on $\bigoplus_{n > 0} \Gamma(\cF \T L^n)$ (and conversely, given such a module, we get an equvariant coherent sheaf on $Z$). $H$-equivariant morphisms on both sides are $H$-invariants in spaces of all morphisms, hence the equivariant morphisms of sheaves correspond to equivariant morphisms of modules. Thus, there is an equivariant version of the Serre equivalence:
\begin{equation} \label{equivariant serre equivalence}
\begin{aligned}
\coh^H(Z) &\simeq \bigoplus_{n \geq 0} \Gamma( L^n )\text{-}\grmod^H_{f.g.} \left/ \bigoplus_{n \geq 0} \Gamma( L^n ) \right.\text{-}\grmod^H_{f.d.} \\
\cF &\mapsto \bigoplus_{n > 0} \Gamma(\cF \T L^n).
\end{aligned}
\end{equation}

Let us introduce some notation. We denote by $C = (C_{ij})_{i,j}$ the Cartan matrix of $\g$. 
Denote by $\al_i$ ($\al^\svee_i$) simple roots (coroots). For $i, j \in I$ write $i \sim j$ if $C_{ij} < 0$.

\begin{thm} \label{exact triples of perverse coherent}
	For any $i \in I$, $\ell \in \bZ$ there are exact sequences of perverse coherent sheaves on $\Gr$:
	\begin{gather*}
	0 \rightarrow \cP^\star_{i, \ell}  \rightarrow \cP_{i, \ell + 1} \star \cP_{i, \ell - 1} \rightarrow  \cP_{i, \ell} \star \cP_{i, \ell} \rightarrow 0; \\
	0 \rightarrow \cP_{i, \ell} \star \cP_{i, \ell}  \rightarrow \cP_{i, \ell - 1} \star  \cP_{i, \ell + 1}  \rightarrow \cP^\star_{i, \ell} \rightarrow 0,
	\end{gather*}
	where
	\[ 
	\cP_{i, \ell}^\star = \bigstarlim_{j \sim i} \cP_{j, \ell}^{\star (- C_{ij})},  
	\]
	and the product is taken in arbitrary order. 
\end{thm}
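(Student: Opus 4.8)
The plan is to transport the $Q$-system short exact sequences of \cite{CV} from $\g[t]$-modules to perverse coherent sheaves, using the equivariant Serre equivalence \eqref{equivariant serre equivalence} and the dictionary of Proposition \ref{relation between perverse coherent and quantum group}. I treat the first sequence in detail; the second is parallel, obtained by interchanging the order of convolution in the middle term (equivalently, by using the opposite ordering of the tensor products on the quantum side). First I would observe that, by Lemma \ref{perverse sitting in a single cohomological degree}, each of $\cP^\star_{i,\ell}$, $\cP_{i,\ell+1}\star\cP_{i,\ell-1}$ and $\cP_{i,\ell}\star\cP_{i,\ell}$ is concentrated in a single cohomological degree for the standard $t$-structure, hence is, up to a cohomological shift dictated by its support, an honest $G^{sc}(\cO)$-equivariant coherent sheaf on the Schubert variety $Z=\ov\Gr^{2\om_i^\svee}$. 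Here $\sum_{j\sim i}(-C_{ij})\om_j^\svee=2\om_i^\svee-\al_i^\svee<2\om_i^\svee$, so all three supports lie in $Z$, with $\cP^\star_{i,\ell}$ supported in codimension two, so that its perverse normalization differs by one cohomological shift from that of the other two terms. Since a short exact sequence of coherent sheaves on the projective variety $Z$ may be detected after applying $\bigoplus_N\Gamma(-\otimes\cL^N)$ for $N\gg 0$ — which is the content of \eqref{equivariant serre equivalence} with $L=\cL$ the very ample determinant bundle — it suffices to produce the corresponding short exact sequence of graded $\bigoplus_N\Gamma(Z,\cL^N)$-modules, equivariantly.

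Next I would identify these graded modules. Twisting by $\cL^N$ raises all levels uniformly, $(\cP_{i,a}\star\cP_{i,b})\otimes\cL^N\simeq\cP_{i,a+N}\star\cP_{i,b+N}$ (projection formula, as in \eqref{derived sections of perverse tensored by determinant}), so for $N\gg 0$ all levels are nonnegative and Proposition \ref{sections of convolution diagram} applies. Combining this with Proposition \ref{relation between perverse coherent and quantum group} at shifted levels, the linear dual of $\Gamma(-\otimes\cL^N)$ of the three sheaves becomes, up to the uniform cohomological shift, the graded limits
\[
\Bigl(\bigotimes_{j\sim i}KR_{j,\ell+N}^{\otimes(-C_{ij})}\Bigr)_{loc},\qquad (KR_{i,\ell+1+N}\otimes KR_{i,\ell-1+N})_{loc},\qquad (KR_{i,\ell+N}\otimes KR_{i,\ell+N})_{loc}.
\]
Now the $Q$-system of \cite{Her} provides, for each $N$, a short exact sequence of $U_q(\bm L\g)$-modules relating the three corresponding tensor products, and by \cite{CV} its graded limit is a short exact sequence of graded $\g[t]$-modules relating the three displayed modules. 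Dualizing and reassembling over all $N\gg 0$ yields the sought short exact sequence of graded modules over the homogeneous coordinate ring, whence, via \eqref{equivariant serre equivalence}, the exact sequence of perverse coherent sheaves.

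The main obstacle is the bookkeeping that makes the last step legitimate, of two kinds. First, the assignment $\cP\mapsto(R\Gamma(\cP)[\,\cdot\,])^*$ of Proposition \ref{relation between perverse coherent and quantum group} is \emph{contravariant} (it involves the linear dual) and shifts the cohomological degree by the half-dimension of the support; one must check that, after accounting for these shifts and for the difference in perversity between $\cP^\star_{i,\ell}$ and the two other terms, the direction of the $\g[t]$-module sequence produced by \cite{CV} matches the direction required in the heart of the perverse coherent $t$-structure. This is exactly what forces the \emph{two} sequences of the theorem and selects which of $\cP^\star_{i,\ell}$ and $\cP_{i,\ell}\star\cP_{i,\ell}$ is the sub. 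Second, and more seriously, one must upgrade the collection of module maps supplied by \cite{CV}, one per twist $N$, to a single morphism of graded modules over $\bigoplus_N\Gamma(Z,\cL^N)$, i.e.\ compatible with multiplication by sections of $\cL$; this compatibility is what actually produces a morphism of sheaves under \eqref{equivariant serre equivalence}. I would establish it by showing that the relevant spaces of $\g[t]$-module homomorphisms are one-dimensional, so that the maps for varying $N$ agree up to scalar and assemble into a graded-module map; once the maps are in place, exactness is checked degreewise for $N\gg 0$ as above.

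Finally, for the middle term $\cP_{i,\ell-1}\star\cP_{i,\ell+1}$ of the second sequence, whose levels are in increasing order and to which Proposition \ref{sections of convolution diagram} does not directly apply, I would first reduce to the decreasing case using the commutativity constraint for the convolution $\star$ coming from the factorization structure of the global convolution diagram of Subsection \ref{subsection global convolution diagrams}; on the quantum side this corresponds precisely to passing to the opposite ordering of the tensor product, which is the source of the second $Q$-system and hence of the second exact sequence.
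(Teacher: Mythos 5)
Your overall strategy coincides with the paper's: reduce via the equivariant Serre equivalence \eqref{equivariant serre equivalence} to a short exact sequence of graded $\bigoplus_{n}\Gamma(\cL^{n})$-modules, identify the graded pieces with graded limits of Q-system terms via Proposition \ref{sections of convolution diagram} and Corollary \ref{fusion of general demazures}, and import exactness from \cite{CV}. You also correctly isolate the real difficulty, namely upgrading the degreewise maps of \cite{CV} to morphisms of graded modules over the homogeneous coordinate ring. However, your proposed mechanism for this step --- one-dimensionality of the relevant $\Hom$ spaces of $\g[t]$-modules --- is not justified and is not how the paper proceeds. Even granting one-dimensionality degree by degree, the condition to be checked is not that the maps for different $N$ "agree up to scalar" but that the square relating $\tilde\phi_{n_1+n_2}$ to $\mathrm{id}\otimes\tilde\phi_{n_2}$ and the two module-structure maps commutes on the nose; a scalar ambiguity in each degree does not obviously cancel across this square, and the source of the relevant composite maps, $\Gamma(\cL^{n_1})\otimes\Gamma(\cA\otimes\cL^{n_2})$, is a tensor product of modules supported at the \emph{same} point, so it is not cyclic and its $\Hom$ spaces are not easy to control. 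The paper instead dualizes so that the square emanates from $\Gamma(\cB\otimes\cL^{n_1+n_2})^{*}$, which \emph{is} $\g[t]$-cyclic, and then identifies the cyclic vectors geometrically as evaluation functionals at the point $t^{2\om_i^\svee}$; both composites send cyclic vector to the tensor product of cyclic vectors, forcing the diagram \eqref{multiplication commutes with morphism} to commute. For the map $\tilde\psi$ one must separately check that the algebraic map of \cite{CV} agrees with the geometric restriction map $\cO_{\ov\Gr^{2\om_i^\svee}}\to\cO_{\ov\Gr^{2\om_i^\svee-\al_i^\svee}}$; you do not address this identification.

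The second, more decisive, gap is your treatment of the second exact sequence. You propose to reduce $\cP_{i,\ell-1}\star\cP_{i,\ell+1}$ to $\cP_{i,\ell+1}\star\cP_{i,\ell-1}$ "using the commutativity constraint for the convolution $\star$ coming from the factorization structure." No such constraint exists: unlike the constructible Satake category, the coherent Satake category is not symmetric monoidal, and the two convolutions $\cP_{i,\ell+1}\star\cP_{i,\ell-1}$ and $\cP_{i,\ell-1}\star\cP_{i,\ell+1}$ are in general non-isomorphic objects (they have equal classes in the Grothendieck ring but one contains $\cP^\star_{i,\ell}$ as a subobject and the other has it as a quotient). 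Indeed, if they were isomorphic, the pair of sequences in the theorem would lose exactly the asymmetry that encodes the cluster mutation. The paper obtains the second sequence from the first by applying Grothendieck--Serre duality, which is the correct substitute for your nonexistent commutativity constraint.
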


Note that we assumed that $\g$ is of simply-laced type, and hence $C_{ij} = -1$ whenever $j \sim i$, but we write it in this form just to be consistent with non-simply-laced case.

\begin{proof}
	The second exact sequence is obtained from the first one by applying the Grothendieck--Serre duality. We prove the first.
	
	Tensoring by a power of $\cL$ we may assume $\ell = 0$. 
	
	Note that
	\[
	\sum_{j \sim i} -C_{ij} \om^\svee_j = \left( \sum_{j \in I} -C_{ij} \om^\svee_j \right) + 2\om^\svee_i = - \alpha^\svee_i + 2 \om_i^\svee.
	\]
	Hence, 
	\begin{equation} \label{last term}
	\cP^\star_{i, 0} \simeq  \bigstarlim_{j \sim i} \cP^{\star - (C_{ij})}_{j, 0}   \simeq \cO_{\ov \Gr^{2 \om_i^\svee - \al_i^\svee}} [\langle \rho, 2\om_i^\svee - \al_i^\svee \rangle ]. 
	\end{equation}
	Also, 
	\begin{equation} \label{middle term}
	\cP_{i, 0} \star \cP_{i, 0} \simeq \cP_{2\om_i^\svee, 0} \simeq \cO_{\ov \Gr^{2 \om_i^\svee}} [\langle \rho, 2 \om_i^\svee \rangle].
	\end{equation}
	
	Now, it is sufficient to construct the distinguished triangle of the form
	\begin{equation} \label{twisted exact triple}
	\cP_{i, 1} \star \cP_{i, - 1} \rightarrow \cP_{i, 0} \star \cP_{i, 0} \rightarrow   \cP^\star_{i, 0} [1]
	\end{equation}
	
	By Lemma \ref{perverse sitting in a single cohomological degree}, all three terms of \eqref{twisted exact triple} lie in a single cohomological degree. Clearly, $\langle \rho, 2 \om_i^\svee \rangle = \langle \rho, {2 \om_i^\svee - \al_i^\svee} \rangle + 1$, so in fact they lie in the same cohomological degree.
	
	Hence, we are to prove that in the abelian category of coherent sheaves on $\Gr$ there is an exact triple
	\[
	0 \rightarrow \cP_{i, 1} \star \cP_{i, - 1} [-2 \langle \rho, \om_i^\svee \rangle] \stackrel{}{\rightarrow} \cP_{i, 0} \star \cP_{i, 0} [- 2 \langle \rho, \om_i^\svee \rangle] \stackrel{}{\rightarrow}   \cP^\star_{i, 0} [- 2 \langle \rho, \om_i^\svee \rangle + 1] \rightarrow 0,
	\]
	whose terms we denote by $\cA, \cB, \cC$ (so we need to construct the exact triple $\cA \stackrel{\phi}{\rightarrow} \cB \stackrel{\psi}{\rightarrow} \cC$).
	
	Using \eqref{middle term} and \eqref{last term}, we see the existence of a surjective morphism 
	\begin{equation} \label{map from B to C}
	\cB \simeq \cO_{\ov \Gr^{2 \om_i^\svee}} \stackrel{\psi}{\rightarrow} \cO_{\ov \Gr^{2 \om_i^\svee - \al_i^\svee}} \simeq \cC.
	\end{equation}

	To construct $\phi$ and prove the exactness, we consider these sheaves as sheaves on a finite-dimensional projective variety $\ov \Gr^{2\om_i^\svee}$. 
    In view of the equivalence \eqref{equivariant serre equivalence}, the functor 
    \[
    \cF \mapsto \bigoplus_{n > 0} \Gamma(\cF \T \cL^n)
    \]
    from $\coh^{G^{sc}(\cO)}(\ov \Gr^{2\om_i^\svee})$ to equivariant modules over the ring $\bigoplus_{n \geq 0} \Gamma( \cL^n )$ is fully faithful, and thus, it is sufficient to construct the short exact triple of $\bigoplus_{n \geq 0} \Gamma( \cL^n )$-modules.
	
	Due to \eqref{derived sections of perverse tensored by determinant} together with Corollary \ref{fusion of general demazures}, for $n > 0$ we have:
	\begin{align} \label{sections of sheaves A, B, C}
	\begin{split}
	\Gamma (\cA \T \cL^n)^* &\simeq D(n + 1,(n + 1) \om_i^\svee) \ast D(n - 1, (n - 1) \om_i^\svee); \\
	\Gamma (\cB \T \cL^n)^* &\simeq D(n, n \om_i^\svee) \ast D(n, n \om_i^\svee); \\
	\Gamma (\cC \T \cL^n)^* &\simeq \bigastlim_{j \sim i} D(n, n \om_j^\svee)^{\ast (-C_{ij})}.
	\end{split}
	\end{align}
	Note that these modules are nothing else but the graded limits of modules, appearing in Q-systems for $U_q(\bold L \g)$-modules. Using this, in \cite{CV} the authors deduced that these $\g[t]$-modules fit into exact triples (see Theorem 4 and Section 5.9; note that for classical types usage of quantum loop groups may be avoided in the proof).
	The map $\Gamma(\cC \otimes \cL^n)^* \hookrightarrow \Gamma(\cB \otimes \cL^n)^*$, constructed in \cite{CV} is the natural embedding $D(n, n(2\om_i^\svee - \al_i^\svee)) \hookrightarrow D(n, n2 \om_i^\svee)$; and the map $\Gamma(\cB \otimes \cL^n)^* \twoheadrightarrow \Gamma(\cA \otimes \cL^n)^*$ is the surjective map, which maps the natural cyclic vector to the natural cyclic vector.
	
	Hence, there is a short exact sequence of graded vector spaces
	\[
	0 \rightarrow \bigoplus_{n > 0} \Gamma (\cA \T \cL^n) \stackrel{\tilde \phi}{\rightarrow} \bigoplus_{n > 0} \Gamma (\cB \T \cL^n) \stackrel{\tilde \psi}{\rightarrow} \bigoplus_{n > 0} \Gamma (\cC \T \cL^n) \rightarrow 0.
	\]
	
	
	It is left to show that these maps are in fact morphisms of $\bigoplus_{n \geq 0} \Gamma(\cL^n)$-modules. 
	
	First, we claim that the morphism $\tilde \psi$, constructed in this way, coincides with the one, induced from $\psi$ of \eqref{map from B to C}. Indeed, the map constructed in \cite{CV} is induced by the embedding $D(n, n (2 \om_i^\svee - \alpha_i^\svee)) \hookrightarrow D(n, 2 n \om_i^\svee)$, whose geometric counterpart, the embedding $\ov \Gr^{2 \om_i^\svee - \alpha_i^\svee} \hookrightarrow \ov \Gr^{2 \om_i^\svee}$, was used to construct $\psi$. Since $\psi$ is a morphism of sheaves, $\tilde{\psi}$ is a morphism of graded modules over the ring $\bigoplus_{n \geq 0} \Gamma(\cL^n)$, so the claim is proved.
	
	Now we prove that $\tilde \phi$ is also a map of modules. First, note that 
    \[
    \bigoplus_{n \geq 0} \Gamma(\cL^n) = \bigoplus_{n \geq 0} D(n, 2n \om_i^\svee)^*,
    \] 
    and the multiplication is dual to the inclusion as a cyclic component: 
    \[
    D(n_1 + n_2, (n_1 + n_2)2 \om_i^\svee) \hookrightarrow D(n_1, 2n_1 \om_i^\svee) \T D(n_2, 2n_2 \om_i^\svee).
    \]
	$\cB \simeq \cO_{\ov \Gr^{2 \om_i^\svee}}$, so $\bigoplus_{n > 0} \Gamma (\cB \T \cL^n)$ is a regular $\bigoplus_{n \geq 0} \Gamma(\cL^n)$-module.
	
	Now, let us note that $\Gamma(\cL^{n_1})^*$ is a cyclic $\g[t]$-module, and due to \eqref{sections of sheaves A, B, C}, $\Gamma(\cA \T \cL^{n_2})^*$ is a tensor product of cyclic $\g[t]$-modules (for all $n_1, n_2 > 0$). Note that the natural cyclic vector of $\Gamma(\cL^{n_1})^*$ (defined up to scalar) is a linear functional on $\Gamma(\cL^{n_1})$, which sends a section to its value on the fiber $\cL^{n_1}\vert_{t^{ 2 \om_i^\svee}}$ ($t^{ 2 \om_i^\svee}$ is a point of $\ov \Gr^{2 \om_i^\svee}$). In the same way, the tensor product of natural cyclic vectors of $\Gamma(\cA \T \cL^{n_2})^*$ is the functional (again defined up to scalar), sending a section to its value at the same point $t^{2 \om_i^\svee}$. It follows that the dual $\Gamma(\cA \T \cL^{n_1 + n_2})^* \rightarrow \Gamma(\cL^{n_1})^* \T \Gamma(\cA \T \cL^{n_2})^*$ to the action map $\Gamma(\cL^{n_1}) \T \Gamma(\cA \T \cL^{n_2}) \rightarrow \Gamma(\cA \T \cL^{n_1 + n_2})$ maps the tensor product of cyclic vectors to the tensor product of cyclic vectors.

	In view of the above, the diagram
	\begin{equation} \label{multiplication commutes with morphism}
	\begin{tikzcd} 
	{\Gamma(\mathcal B \otimes \mathcal L^{n_1 + n_2})^*} & {\Gamma(\mathcal A \otimes \mathcal L^{n_1 + n_2})^*} \\
	{\Gamma(\mathcal L^{n_1})^* \otimes \Gamma(\mathcal B \otimes \mathcal L^{ n_2})^*} & {\Gamma(\mathcal L^{n_1})^* \otimes \Gamma(\mathcal A \otimes \mathcal L^{ n_2})^*}
	\arrow["{\tilde \phi_{n_1 + n_2}^*}", from=1-1, to=1-2]
	\arrow["{a_{\mathcal B}^*}"', from=1-1, to=2-1]
	\arrow["{\mathrm{id} \otimes \tilde \phi_{n_2}^*}"', from=2-1, to=2-2]
	\arrow["{a_{\mathcal A}^*}", from=1-2, to=2-2]
	\end{tikzcd}
	\end{equation}
	of $\g[t]$-modules is commutative for any $n_1, n_2 > 0$, since $\Gamma(\mathcal B \otimes \mathcal L^{n_1 + n_2})^*$ is $\g[t]$-cyclic and both $m_\cA^* \circ \tilde \phi_{n_1 + n_2}^*$ and $\id \T \tilde \phi_{n_2}^* \circ m_{\cB}^*$ map the cyclic vector of $\Gamma(\mathcal B \otimes \mathcal L^{n_1 + n_2})^*$ to the tensor product of cyclic vectors in $\Gamma(\mathcal L^{n_1})^* \otimes \Gamma(\mathcal A \otimes \mathcal L^{ n_2})^*$ (note again that by their construction in \cite{CV}, $\tilde{\phi}^*_n$ map cyclic vector to cyclic).
	
	Dualizing all the modules (and morphisms) in \eqref{multiplication commutes with morphism}, we obtain that $\tilde \phi$ is indeed a morphism of $\bigoplus_{n \geq 0} \Gamma(\cL^n)$-modules.
\end{proof}

We hope that a similar proof should exist in non-simply laced cases.

\begin{rem}
Note that in \cite[Lemma 3.1]{Ked} and \cite[Lemma 2.1]{DK2}, the classes of Kirillov--Reshetikhin modules are multiplied by roots of 1, so that a sign changes in the Q-systems relation, and after this change it becomes a cluster relation. In our case, this change of sign naturally appears in the proof from the cohomological shift \eqref{twisted exact triple}.

Moreover, it is stated in \cite{Ked} that the variables $Q_{i, \ell}$ have no meaning for $\ell < 0$ from the point of view of $U_q(\bold L \g)$ representation theory, but they are needed in order to get a cluster structure. In our case, the sheaves $\cP_{i, \ell}$ are well-defined and natural equally for any integer~$\ell$.

Both of these observations demonstrate that perhaps the category $\cP^{G^{sc}(\cO)}_{coh}(\Gr)$ is more natural for categorical realization of cluster algebra of \cite{Ked, DK2} than some category related to $\Rep U_q(\bold L \g)$, though these realizations are related, as our proof shows.
\end{rem}


\begin{thebibliography}{99}

\bibitem[AR1]{AR1}
Achar, P.N. and Riche, S., 2023. Higher nearby cycles and central sheaves on affine flag varieties. arXiv preprint arXiv:2305.11696.

\bibitem[AR2]{AR2}
P. N. Achar, S. Riche, {\it Central sheaves on affine flag varieties}, preprint, available at https://lmbp.uca.fr/~riche/central.pdf

\bibitem[AK]{AK}	
Ardonne, E. and Kedem, R., 2007. Fusion products of Kirillov–Reshetikhin modules and fermionic multiplicity formulas. Journal of algebra, 308(1), pp.270-294.
	
\bibitem[AB]{AB}
Dmitry Arinkin, Roman Bezrukavnikov, “Perverse coherent sheaves”, Mosc. Math. J., 10:1 (2010), 3–29
			

\bibitem[Bez]{Bez} Bezrukavnikov, Roman. "Perverse coherent sheaves (after Deligne)." arXiv preprint math/0005152 (2000).

\bibitem[BFM]{BFM}
Bezrukavnikov, R., Finkelberg, M., \& Mirković, I. (2005). Equivariant homology and K-theory of affine Grassmannians and Toda lattices. Compositio Mathematica, 141(3), 746-768. doi:10.1112/S0010437X04001228

\bibitem[BCSW]{BCSW}
Biswal, R., Chari, V., Shereen, P. and Wand, J., 2021. Macdonald polynomials and level two Demazure modules for affine $\msl_{n+ 1}$. Journal of Algebra, 575, pp.159-191.

\bibitem[BF]{BF}
Braverman, Alexander, and Michael Finkelberg. "Weyl modules and q-Whittaker functions." Mathematische Annalen 359, no. 1-2 (2014): 45-59.

\bibitem[BCKV]{BCKV}
Brito, M., Chari, V., Kus, D. et al. Quantum Affine Algebras, Graded Limits and Flags. J Indian Inst Sci 102, 1001–1031 (2022). https://doi.org/10.1007/s41745-022-00308-x

\bibitem[BCM]{BCM}
Brito, M., Chari, V., \& Moura, A. (2018). DEMAZURE MODULES OF LEVEL TWO AND PRIME REPRESENTATIONS OF QUANTUM AFFINE $\mathfrak{sl}_{n + 1}$. Journal of the Institute of Mathematics of Jussieu, 17(1), 75-105. doi:10.1017/S1474748015000407

\bibitem[CW]{CW}
Cautis, Sabin, and Harold Williams. "Cluster theory of the coherent Satake category." Journal of the American Mathematical Society 32, no. 3 (2019): 709-778.

\bibitem[CL]{CL}
Vyjayanthi Chari, Sergei Loktev,
Weyl, Demazure and fusion modules for the current algebra of $\fsl_{r+1}$,
Advances in Mathematics, Volume 207, Issue 2, 2006, Pages 928-960, ISSN 0001-8708, https://doi.org/10.1016/j.aim.2006.01.012.

\bibitem[CP]{CP}
Chari, Vyjayanthi, and Andrew Pressley. "Weyl modules for classical and quantum affine algebras." Representation Theory of the American Mathematical Society 5, no. 9 (2001): 191-223.

\bibitem[CSVW]{CSVW}
Chari, V., Shereen, P., Venkatesh, R. and Wand, J., 2016. A Steinberg type decomposition theorem for higher level Demazure modules. Journal of Algebra, 455, pp.314-346.




\bibitem[CV]{CV} Chari, V., Venkatesh, R. Demazure Modules, Fusion Products and Q-Systems. Commun. Math. Phys. 333, 799–830 (2015). https://doi.org/10.1007/s00220-014-2175-x
		
\bibitem[DK1]{DK1}
Di Francesco, P. and Kedem, R., 2008. Proof of the combinatorial Kirillov-Reshetikhin conjecture. International Mathematics Research Notices, 2008(9), pp.rnn006-rnn006.

\bibitem[DK2]{DK2}
Di Francesco, P. and Kedem, R., 2009. Q-systems as cluster algebras II: Cartan matrix of finite type and the polynomial property. Letters in mathematical physics, 89(3), pp.183-216.

\bibitem[DF]{DF}
I. Dumanski, E. Feigin, {\it Reduced arc schemes for Veronese embeddings and global Demazure modules}, Communications in Contemporary Mathematics, 2022, \url{https://doi.org/10.1142/S0219199722500341}.
		
		
\bibitem[DFF]{DFF}
Dumanski, I., Feigin, E., \& Finkelberg, M. (2021). Beilinson–Drinfeld Schubert varieties and global Demazure modules. Forum of Mathematics, Sigma, 9, E42. doi:10.1017/fms.2021.36	

		
\bibitem[FeLo]{FeLo} B.~Feigin, S.~Loktev, 
{\it On generalized Kostka polynomials and the quantum Verlinde rule}, 
Amer. Math. Soc. Transl. Ser. 2, 194, Adv. Math. Sci., 44, Amer. Math. Soc., Providence, RI, 1999.

\bibitem[Fei]{Fei}
Feigin, Evgeny. "The PBW filtration, Demazure modules and toroidal current algebras." SIGMA. Symmetry, Integrability and Geometry: Methods and Applications 4 (2008): 070.

\bibitem[FM1]{FM1}
Feigin, E. and Makedonskyi, I., 2017. Generalized Weyl modules, alcove paths and Macdonald polynomials. Selecta Mathematica, 23, pp.2863-2897.
		
\bibitem[FM2]{FM2}
Feigin, E. and Makedonskyi, I., 2020. Semi-infinite Plücker relations and Weyl modules. International Mathematics Research Notices, 2020(14), pp.4357-4394.
		
\bibitem[FF]{FF}
Finkelberg, Michael, and Ryo Fujita. "Coherent IC-sheaves on type $A_n$ affine Grassmannians and dual canonical basis of affine type $A_1$." Representation Theory of the American Mathematical Society 25, no. 3 (2021): 67-89.

\bibitem[FT]{FT}
Finkelberg, M. and Tsymbaliuk, A., 2019. Multiplicative slices, relativistic Toda and shifted quantum affine algebras. Representations and Nilpotent Orbits of Lie Algebraic Systems: In Honour of the 75th Birthday of Tony Joseph, pp.133-304.

\bibitem[FH]{FH}
Fourier, Ghislain, and David Hernandez. "Schur positivity and Kirillov-Reshetikhin modules." SIGMA. Symmetry, Integrability and Geometry: Methods and Applications 10 (2014): 058.
	
\bibitem[FoLi1]{FoLi1} G.~Fourier, P.~Littelmann,
{\it Tensor product structure of affine Demazure modules and limit constructions},
Nagoya Math. J. 182 (2006), 171--198.
		
		
\bibitem[FoLi2]{FoLi2} G.~Fourier, P.~Littelmann,
{\it Weyl modules, Demazure modules, KR-modules, crystals, fusion products and limit constructions},
Adv. Math. 211 (2007), no. 2, 566--593.

\bibitem[FMM]{FMM}
Fourier, Ghislain, Victor Martins, and Adriano Moura. "On truncated Weyl modules." Communications in Algebra 47, no. 3 (2019): 1125-1146.

\bibitem[Gai]{Gai}
Gaitsgory, D. Construction of central elements in the affine Hecke algebra via nearby cycles. Invent. math. 144, 253–280 (2001). https://doi.org/10.1007/s002220100122

\bibitem[Her]{Her}
Hernandez, David. "The Kirillov-Reshetikhin conjecture and solutions of T-systems" Journal für die reine und angewandte Mathematik 2006, no. 596 (2006): 63-87. https://doi.org/10.1515/CRELLE.2006.052

\bibitem[HL]{HL} Hernandez, David, and Bernard Leclerc. "Cluster algebras and quantum affine algebras." (2010): 265-341.

\bibitem[HY]{HY}
Hong, J. and Yu, H., 2022. Beilinson-Drinfeld Schubert varieties of parahoric group schemes and twisted global Demazure modules. arXiv preprint arXiv:2209.07347.

\bibitem[Kat1]{Kat1}
Kato, S. Demazure character formula for semi-infinite flag varieties. Math. Ann. 371, 1769–1801 (2018). https://doi.org/10.1007/s00208-018-1652-5

\bibitem[Kat2]{Kat2}
Syu Kato, "The formal model of semi-infinite flag manifolds", https://www.math.kyoto-u.ac.jp/~syuchan/icm22.pdf

\bibitem[Ked]{Ked}
Kedem, R., 2008. Q-systems as cluster algebras. Journal of Physics A: Mathematical and Theoretical, 41(19), p.194011.

\bibitem[Kem]{Kem}
Kempf, George R. “Linear Systems on Homogeneous Spaces.” Annals of Mathematics 103, no. 3 (1976): 557–91. https://doi.org/10.2307/1970952.

\bibitem[Kum]{Kum} Kumar, Shrawan. Kac-Moody Groups, Their Flag Varieties and Representation Theory. Boston, MA :Birkhäuser Boston : Imprint: Birkhäuser, 2002.

\bibitem[KL]{KL}
Kus, D. and Littelmann, P., 2015. Fusion products and toroidal algebras. Pacific Journal of Mathematics, 278(2), pp.427-445.

\bibitem[LLM]{LLM}
Lakshmibai, V., Littelmann, P., \& Magyar, P. (2002). Standard Monomial Theory for Bott–Samelson Varieties. Compositio Mathematica, 130(3), 293-318. doi:10.1023/A:1014396129323


\bibitem[Nao1]{Nao1}
Naoi, Katsuyuki. "Weyl modules, Demazure modules and finite crystals for non-simply laced type." Advances in Mathematics 229, no. 2 (2012): 875-934.

\bibitem[Nao2]{Nao2}
Naoi, K., 2013. Demazure modules and graded limits of minimal affinizations. Representation Theory of the American Mathematical Society, 17(18), pp.524-556.

\bibitem[Nao3]{Nao3}
Naoi, Katsuyuki. "Defining relations of fusion products and Schur positivity." arXiv preprint arXiv:1504.00109 (2015).

\bibitem[Nao4]{Nao4}
Katsuyuki Naoi, Tensor Products of Kirillov–Reshetikhin Modules and Fusion Products, International Mathematics Research Notices, Volume 2017, Issue 18, September 2017, Pages 5667–5709, https://doi.org/10.1093/imrn/rnw183


\bibitem[Par]{Par}
Parshina, K. (2023) Philosophical Assumptions Behind the Rejection of Computer-Based Proofs. KRITERION – Journal of Philosophy. https://doi.org/10.1515/krt-2022-0015

\bibitem[Rav1]{Rav1}
Ravinder, Bhimarthi. "Demazure modules, Chari-Venkatesh modules and fusion products." SIGMA. Symmetry, Integrability and Geometry: Methods and Applications 10 (2014): 110.

\bibitem[Rav2]{Rav2}
Ravinder, B., 2017. Generalized Demazure modules and fusion products. Journal of Algebra, 476, pp.186-215.

\bibitem[Ven]{Ven} Venkatesh, R., 2015. Fusion product structure of Demazure modules. Algebras and Representation Theory, 18(2), pp.307-321.


\bibitem[VV]{VV} Venkatesh, R. and Viswanath, S., 2021. A note on the fusion product decomposition of Demazure modules. arXiv preprint arXiv:2102.01334.
		
		
\bibitem[Zhu]{Zhu}
X.~Zhu, {\it An introduction to affine Grassmannians and the geometric Satake equivalence},
In: Geometry of Moduli Spaces and Representation Theory. IAS/Park City Mathematics Series. No.24, 2017. 
American Mathematical Society, Providence, RI, pp. 59--154.
		
\end{thebibliography}
\end{document}